\theoremstyle{plain}
\newtheorem{theorem}{Theorem}[section]
\newtheorem{lemma}[theorem]{Lemma}
\newtheorem{proposition}[theorem]{Proposition}
\newtheorem{corollary}[theorem]{Corollary}
\newtheorem{condition}[theorem]{Condition}
\theoremstyle{definition}
\newtheorem{definition}[theorem]{Definition}
\newtheorem{notation}[theorem]{Notation}
\theoremstyle{remark}
\newtheorem{example}[theorem]{Example}
\newtheorem{remark}[theorem]{Remark}
\DeclareMathOperator{\ord}{ord}
\DeclareMathOperator{\diag}{diag}
\newcommand{\norm}[1]{\left\lVert#1\right\rVert}
\newcommand{\inner}[2]{\langle #1, #2\rangle}
\renewcommand{\tilde}{\widetilde}
\renewcommand{\bar}{\overline}
\newcommand{\A}{\mathscr{A}}
\newcommand{\C}{\mathbb{C}}
\newcommand{\N}{\mathbb{N}}
\newcommand{\Q}{\mathbb{Q}}
\newcommand{\R}{\mathbb{R}}
\renewcommand{\S}{\mathscr{S}}
\newcommand{\T}{\mathbb{T}}
\newcommand{\Z}{\mathbb{Z}}
\numberwithin{equation}{section}
	\title[Constructing equivalence bimodules between noncommutative solenoids]{Constructing equivalence bimodules between noncommutative solenoids: a
		two-pronged approach}
	\author[Shen Lu]{Shen Lu}
	\date{\today}
	\address{%
		Department of Mathematics,
		University of Colorado Boulder,
		Campus Box 395,
		Boulder, CO 80309-0395, USA}
	\email{shen.lu@colorado.edu}
	\keywords{C*-algebras; direct limit; noncommutative solenoids; projective modules; $p$-adic analysis}
	\subjclass[2020]{Primary: 46M40, 46L08; Secondary 46L80, 19K14}
\begin{document}
	\begin{abstract}
		We revisit and generalize the application of a method introduced by Latr{\'e}moli{\`e}re and Packer for constructing finitely generated projective modules over the noncommutative solenoid C*-algebras.
		By realizing them as direct limits of rotation algebras, the method constructs directed systems of equivalence bimodules between rotation algebras that satisfy the necessary compatibility conditions to build Morita equivalence bimodules between the direct limit C*-algebras.
		In the irrational case, we use a fixed projection in a matrix algebra over the rotation algebra satisfying a key condition to build an equivalence bimodule at each stage following a construction of Rieffel.
		From this, our main result shows that two irrational noncommutative solenoids are Morita equivalent if and only if such a projection exists.
		We also make additional observations about the Heisenberg bimodules construction studied by the aforementioned two authors and connect the two constructions.
	\end{abstract}
    \maketitle
    
	\section{Introduction}\label{S-introduction}
		
	In this paper, we continue the study of the noncommutative solenoids introduced by Latr{\'e}moli{\`e}re and Packer \cite{LP1}.
	For a fixed prime $p$, they are defined as the twisted group C*-algebras on $\Z\left[ \frac{1}{p}\right]$, the additive group of the ring of integers adjoining the multiplicative inverse of $p$, with the discrete topology.
	These C*-algebras are one of the first examples of twisted group C*-algebras associated to non-compactly generated abelian groups with in-depth analysis.
	A focus of Latr{\'e}moli{\`e}re and Packer's earlier work was on explicit constructions of finitely generated projective modules over the noncommutative solenoids \cite{LP2, LP3}. 
	Since then, the same two authors have also studied these C*-algebras in the context of noncommutative metric geometry \cite{LP17}.
	More recently, N. Brownlowe, M. Hawkins, and A. Sims introduced a class of Toeplitz extensions to these C*-algebras and investigated their associated Kubo-Martin-Schwinger (KMS) states \cite{BHS19}.
	Another reason to study these C*-algebras is that they are $\mathcal{Z}$-stable and fit into the Elliott classification program, but the fact that their $K$-groups are infinitely generated makes the study highly non-trivial. Yet another point of interest is that these C*-algebras can also be expressed as groupoid C*-algebras, which have recently proven to be of interest in the research program of R. Deeley, I. Putnam, and K. Strung \cite{DPS18, DPS19}, which is also centered on classification.
	
	This paper is organized as follows.
	In the preliminaries section, we recall the definition of noncommutative solenoids $\A_\alpha^\S$ as twisted group C*-algebras.
	More importantly, we focus on the alternative description that realizes $\A_\alpha^\S$ as a direct limit of rotation algebras.
	Specifically, fix prime $p$ and let $\alpha = \left( \alpha_n \right)_{n \in \N}$ be a sequence of real numbers from $[0, 1)$ such that for all $n \in \N$, $p\alpha_{n + 1} = \alpha_n + x_n$ for some $x_n \in \{ 0, \dotsc, p-1 \}$.
	Then the sequence of rotation algebras $A_{\alpha_0} \rightarrow A_{\alpha_2} \rightarrow A_{\alpha_4} \rightarrow \cdots$ converges to $\A_\alpha^\S$, where the connecting maps send the generators $U_{\alpha_{2n}}$ and $V_{\beta_{2n}}$ of $A_{\alpha_{2n}}$ to the $p$th powers of the corresponding generators of $A_{\alpha_{2n+2}}$.
	Moreover, we can naturally associate  to each $\alpha$ a $p$-adic integer that is given by $x_\alpha = \sum_{j = 0}^\infty x_j p^j$.
	We will exploit this connection to $p$-adic analysis throughout our study.
	Since $A_{\theta} \cong A_{\theta + n}$ for any $n \in \Z$, changing any entry of $\alpha$ by an integer does not affect the resulting solenoid.
	For this reason, we introduce an additive group $\Omega_p$ of real number sequences satisfying $p \alpha_{n+1} \equiv \alpha_n \mod \Z$ and define a unique noncommutative solenoid from $\alpha \in \Omega_p$ as a direct limit of rotation algebras the same way as before.
	When $\alpha$ is an irrational sequence, $\A_\alpha^\S$ is a direct limit of irrational rotation algebras, which have been studied extensively.
	Since much is known about the structure of their projective modules, we primarily concern ourselves with the irrational noncommutative solenoids.
	
	In Section \ref{S-heisenberg bimodules over NS}, we recall a construction of Latr{\'e}moli{\`e}re and Packer for forming projective modules over noncommutative solenoids.
	The construction is originally due ot Rieffel \cite{Rieffel88} and commonly referred to as the Heisenberg bimodules.
	In summary, one can embed $\Z\left[ 1/p \right] \times \Z\left[ 1/p\right]$ into the self-dual group $M = \R \times \Q_p$ as a discrete cocompact subgroup, where $\Q_p$ is the field of $p$-adic numbers.
	The embedding can be done in such a way so that the twisted group C*-algebra associated to the image in $M$ (with the Heisenberg multiplier) is isomorphic to $\A_\alpha^\S$ and the one associated to the annihilator of the image (with the conjugated Heisenberg multiplier) is isomorphic to $\A_\beta^\S$ for some $\beta \in \Omega_p$.
	In this case, $C_c\left( M \right)$, suitably completed, is an equivalence bimodule between $\A_\alpha^\S$ and $\A_\beta^\S$.
	Our contribution here is to simply give an explicit formula for $\beta$, using basic facts about the $p$-adic numbers.
	This formula will be useful when we relate our second construction to the Heisenberg bimodules in the irrational case.
	
	The main concern of this paper is the open problem of classifying the irrational noncommutative solenoids up to (strong) Morita equivalence.
	Since these C*-algebras are unital, $\A_\alpha^\S$ and $\A_\beta^\S$ are Morita equivalent if and only if one is a full corner of the algebra of $k \times k$ matrices over the other for a suitable $k$.
	That is, there exists a projection $P \in M_k \left(  \A_\alpha^\S \right)$ such that $\A_\beta^\S \cong P M_k \left(  \A_\alpha^\S \right) P$.
	It is sufficient to assume that $P \in M_k\left( A_{\alpha_{2N}} \right)$ for some $N \in \N$, as $\A_\alpha^\S = \varinjlim A_{\alpha_{2n}}$ and the connecting maps are all unital embeddings.
	For the same reason, one can regard $P$ as a projection in $M_k\left( A_{\alpha_{2n}} \right)$ for all $n \geq N$. In Section \ref{S-directed systems}, using such a projection and a formulation of Rieffel, we establish a Morita equivalence bimodule $X_{2n}$ between $A_{\alpha_{2n}}$ and $A_{\beta_{2n}}$, where $\beta_{2n}$ is the appropriate fractional linear transformation of $\alpha_{2n}$ for each $n$. 
	In \cite{LP3}, Latr{\'e}moli{\`e}re and Packer introduced a notion of directed systems of equivalence bimodules adapted to directed systems of unital C*-algebras that gives rise to an equivalence bimodule between the direct limit C*-algebras.
	In their definition, the bimodules embeddings must satisfy the necessary compatibility conditions in relation to the connecting maps in the direct limit C*-algebras.
	We define explicit embeddings of $X_{2n}$ into $X_{2n+2}$ that give us a directed system of equivalence bimodules between two sequences of irrational rotation algebras, each of which converges to a noncommutative solenoid. 
	Much of Section \ref{S-directed systems} is devoted to checking the bimodule embeddings are compatible with the connecting maps required to form a noncommutative solenoid. 
	Additionally, motivated by finding all such projections, we give a key condition on projections that is both necessary and sufficient to establish equivalence bimodules between solenoids using the directed systems construction.
	
	In Section \ref{S-morita-problem}, we address the question of Morita equivalence for irrational noncommutative solenoids. Specifically, we conclude that $\A_\alpha^\S$ and $\A_\beta^\S$ are Morita equivalent if and only if there exists a projection in a matrix algebra over $\A_{\alpha}^\S$ that satisfies our key condition.

	\begin{notation}
		Throughout this paper, the natural numbers $\N$ include zero. Unless otherwise specified, we always follow the convention that $\sum_{j = i}^{n} \cdots = 0$ if $n < i$. 
	\end{notation}


	\section{Review of the Noncommutative Solenoids}\label{S-review of nc solenoids}
	
	In this section, we recall the necessary definitions concerning the noncommutative solenoids.
	
	Fix prime $p$, and let
	$$
	\Z\left[ \frac{1}{p} \right] := \left\{ \frac{j}{p^k} \in \Q \:: j \in \Z, k \in \N \right\}
	$$
	be the additive subgroup of $\Q$ consisting of rational numbers whose denominators are nonnegative integral powers of $p$.
	Naturally, we insist that all elements of $\Z\left[ 1/p\right]$ are written in their unique reduced form, that is, the exponent of $p$ in the denominator is minimal. We endow $\Z\left[ 1/p \right]$ with the discrete topology.
	
	\begin{definition}\label{D-nc solenoid}
		Fix prime $p$. A \emph{noncommutative solenoid} is a twisted group C*-algebra of the form
		\[
		C^\ast\left( \Z\left[ \frac{1}{p} \right] \times \Z\left[ \frac{1}{p} \right], \sigma \right),
		\]
		where $\sigma$ is a multiplier of the group $\Z\left[1/p \right] \times \Z\left[ 1/p \right]$. To easy notation, we will denote by $\Gamma$ the discrete group $\Z\left[1/p \right] \times \Z\left[ 1/p \right]$.
	\end{definition}
	
	Recall that a multiplier $\sigma$ on a locally compact group $G$ is a Borel function $G \times G \rightarrow \T$, where $\T$ is the unit circle in $\C$, satisfying 
	\begin{enumerate}
		\item $\sigma(r, s) \sigma(r+s, t) = \sigma(r, s+t)\sigma(s, t)$;
		\item $\sigma(s, e) = \sigma(e, s) = 1$,
	\end{enumerate}
	for all $r, s, t \in G$, with $e$ being the identity of $G$.
	In other words, a multiplier is a normalized $2$-cocycle for the trivial group action of $G$ on $\T$.
	The set of multipliers is denoted by $Z^2(G, \T)$ and two multipliers $\sigma$, $\tau$ are cohomologous, or equivalent, if there is a Borel function $\rho: G \rightarrow \T$ such that $\sigma(s, t) = \rho(s)\rho(t)\rho(s+t)^{-1}\tau(s ,t)$ for all $s, t \in G$.
	The set of multipliers cohomologous to the multiplier that is identically $1$ form a normal subgroup of $Z^2(G, \T)$, denoted by $B^2(G, \T)$.
	The quotient group is denoted by $H^2(G, \T)$.
	If two multipliers $\sigma$ and $\tau$ are cohomologous, then the twisted group C*-algebras $C^\ast(G, \sigma)$ and $C^\ast(G, \tau)$ are $\ast$-isomorphic.
	It is then necessary to classify the multipliers up to their cohomology classes.
	This was first addressed in \cite{LP1} and we record it here, as it is crucial to our computations in Section \ref{S-heisenberg bimodules over NS}.
	
	\begin{theorem}\label{Xi_p_Psi_alpha}\emph{\cite[Theorem 2.3]{LP1}}
		Fix prime $p$, and the set
		\[
		\Xi_p := \{ (\alpha_n)_{n \in \N} \:: \alpha_0 \in [0, 1) \text{ and } \forall n \in \N, \exists x_n \in \{ 0, \dotsc, p-1 \} \text{ such that } p \alpha_{n+1} = \alpha_n + x_n  \}
		\]
		is a group under pointwise addition modulo one.
		There exists a group isomorphism $\rho: H^2(\Gamma, \T) \rightarrow \Xi_p$ such that if $\sigma \in H^2(\Gamma, \T)$ and $\alpha = \rho\left( \sigma \right)$, and if $f$ is a multiplier of equivalence class $\sigma$, then $f$ is cohomologous to the multiplier:
		\[
		\Psi_\alpha: \begin{cases}
			\qquad \qquad \quad \Gamma \times \Gamma & \rightarrow \T\\
			\left( \left( \dfrac{j_1}{p^{k_1}} , \dfrac{j_2}{p^{k_2}}   \right), \left( \dfrac{j_3}{p^{k_3}} , \dfrac{j_4}{p^{k_4}} \right) \right) &\mapsto \exp \left( 2 \pi i \alpha_{(k_1+k_4)}j_1j_4  \right).
		\end{cases}
		\]
	\end{theorem}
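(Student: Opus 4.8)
The plan is to classify the multipliers homologically: show that every cohomology class on the discrete abelian group $\Gamma$ is represented by a single ``cross-term'' cocycle of the form $\Psi_\alpha$, and then identify the parameter space of these cocycles with $\Xi_p$ via Pontryagin duality. Since $\Gamma$ is discrete, every Borel multiplier is automatically a genuine $2$-cocycle, so Moore's cohomology coincides with ordinary group cohomology, and I may work with $H^2(\Gamma, \T)$ in the abstract algebraic sense throughout.

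First I would dispose of the easy assertions. Checking that $\Xi_p$ is a group under pointwise addition modulo one is routine; more to the point, the defining relation $p\alpha_{n+1} \equiv \alpha_n \pmod 1$ says precisely that a sequence $\alpha \in \Xi_p$ is a coherent string for the tower $\T \xleftarrow{\;\times p\;} \T \xleftarrow{\;\times p\;} \cdots$, whence $\Xi_p \cong \varprojlim(\T, \times p) \cong \widehat{\Z[1/p]}$, the Pontryagin dual of $\Z[1/p]$, which is exactly the $p$-adic solenoid underlying these algebras. Concretely, $\alpha$ corresponds to the character $\chi_\alpha$ of $\Z[1/p]$ determined by $\chi_\alpha(j/p^k) = \exp(2\pi i \alpha_k j)$, the compatibility relation on $\alpha$ being exactly what makes $\chi_\alpha$ well defined on the direct limit $\Z[1/p] = \varinjlim \frac{1}{p^k}\Z$.

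The heart of the argument is the antisymmetrization homomorphism $H^2(\Gamma, \T) \to \operatorname{Alt}(\Gamma)$, $[\sigma] \mapsto \bigl((x,y) \mapsto \sigma(x,y)\sigma(y,x)^{-1}\bigr)$, into the group of alternating bicharacters of $\Gamma$. I would prove this is an isomorphism in two steps. For injectivity, the kernel consists of the classes of symmetric cocycles; these classify abelian extensions of $\Gamma$ by $\T$, i.e. $\operatorname{Ext}^1_{\Z}(\Gamma, \T)$, which vanishes because $\T = \R/\Z$ is divisible, hence injective as a $\Z$-module. (Equivalently, in the universal coefficient sequence $0 \to \operatorname{Ext}^1_{\Z}(\Gamma, \T) \to H^2(\Gamma,\T) \to \operatorname{Hom}(H_2(\Gamma), \T) \to 0$ the outer $\operatorname{Ext}$ term dies, giving $H^2(\Gamma,\T) \cong \operatorname{Hom}(\wedge^2\Gamma, \T)$.) For surjectivity I would exhibit an explicit preimage of each alternating bicharacter, which is exactly the role played by $\Psi_\alpha$: setting $\Psi_\alpha((a,b),(c,d)) := \chi_\alpha(ad)$, where $ad$ is the product taken in $\Z[1/p]$, one checks directly that $\Psi_\alpha$ is a normalized $2$-cocycle and that its antisymmetrization is $(x,y)\mapsto \chi_\alpha(ad-bc)$. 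Writing out $ad = j_1j_4/p^{k_1+k_4}$ recovers the stated formula $\exp(2\pi i \alpha_{k_1+k_4} j_1 j_4)$ and explains the appearance of the index $k_1+k_4$.

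It then remains to compute $\operatorname{Alt}(\Gamma)$ and assemble the maps. Since $\wedge^2_{\Z}\Z[1/p] = 0$ (as $\Z[1/p]$ has torsion-free rank one and $\wedge^2$ commutes with direct limits), an alternating bicharacter on $\Gamma = \Z[1/p] \oplus \Z[1/p]$ must vanish on each coordinate subgroup and is therefore determined by its cross pairing; this gives $\operatorname{Alt}(\Gamma) \cong \operatorname{Hom}(\Z[1/p]\otimes_{\Z}\Z[1/p], \T) \cong \operatorname{Hom}(\Z[1/p], \T) \cong \Xi_p$, under which the bicharacter $(x,y)\mapsto \chi(ad-bc)$ corresponds to $\chi$, hence to the $\alpha$ with $\chi = \chi_\alpha$. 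Combining the three identifications, $\alpha \mapsto [\Psi_\alpha]$ is a group isomorphism $\Xi_p \to H^2(\Gamma, \T)$ (additivity in $\alpha$ matching pointwise multiplication of the cocycles), and its inverse is the desired $\rho$. I expect the main obstacle to be the injectivity step, namely pinning down the kernel of antisymmetrization and justifying $\operatorname{Ext}^1_{\Z}(\Gamma,\T) = 0$ carefully; once that symmetric part is killed, the explicit cocycle $\Psi_\alpha$ and the $p$-adic duality do the rest.
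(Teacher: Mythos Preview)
The paper does not prove this theorem; it is quoted verbatim from \cite[Theorem 2.3]{LP1} and used as background. There is therefore no ``paper's own proof'' to compare against.

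That said, your argument is correct and is essentially the standard one. The identification $\Xi_p \cong \varprojlim(\T,\times p) \cong \widehat{\Z[1/p]}$ is exactly right, and the short exact sequence
\[
0 \to \operatorname{Ext}^1_{\Z}(\Gamma,\T) \to H^2(\Gamma,\T) \to \operatorname{Hom}(\wedge^2\Gamma,\T) \to 0
\]
for a discrete abelian $\Gamma$, with the Ext term vanishing by divisibility of $\T$, is the clean way to kill the symmetric cocycles. Your computation $\wedge^2\bigl(\Z[1/p]\oplus\Z[1/p]\bigr) \cong \Z[1/p]\otimes_{\Z}\Z[1/p] \cong \Z[1/p]$ (using $\wedge^2\Z[1/p]=0$ and that $\Z[1/p]$ is a localization of $\Z$) is correct, and the explicit section $\alpha \mapsto \Psi_\alpha$ with antisymmetrization $\chi_\alpha(ad-bc)$ closes the loop. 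One small point worth saying aloud: the isomorphism between alternating bicharacters and $\operatorname{Hom}(\wedge^2\Gamma,\T)$ uses that $\T$ is $2$-divisible, so that alternating (i.e.\ $\phi(x,x)=1$) and antisymmetric coincide; you use this implicitly but it is harmless here. The ``main obstacle'' you flag is not really an obstacle: injectivity of $\T$ as a $\Z$-module is exactly the one-line justification needed.
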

	
	It follows immediately that $\Psi_\alpha$ and $\Psi_\beta$ are cohomologous if and only if $\alpha = \beta \in \Xi_p$.
	Notice that changing any number of entries of $\alpha \in \Xi_p$ by integer values does not change the multiplier, hence the noncommutative solenoid it defines.
	However, changing any entry of $\alpha$ by a nonzero integer takes $\alpha$ out of the group $\Xi_p$, so it is convenient to define the following group:
	\[
	\Omega_p := \{ (\alpha_n)_{n \in \N} \in \R^\N \:: \forall n \in \N, \exists x_n \in \Z \text{ such that } p \alpha_{n+1} = \alpha_n + x_n \},
	\]
	with pointwise addition.
	The map $h: \Omega_p \rightarrow \Xi_p$ given by $h\left( (\alpha_n)_{n \in \N} \right) = \left( \alpha_n \mod \Z \right)_{ n \in \N}$ defines a surjective group homomorphism.
	Given any $\alpha \in \Xi_p$, the corresponding noncommutative solenoid $\A^\S_\alpha = C^*(\Gamma, \Psi_\alpha)$ can also be defined by any element in the fiber of $\alpha$ under $h$.
	Frequently, a noncommutative solenoid will be defined by some $\alpha \in \Omega_p$, and we make no distinction between $C^\ast(\Gamma, \Psi_\alpha)$ and $C^\ast(\Gamma, \Psi_{h\left( \alpha \right)})$ for any $\alpha \in \Omega_p$.
	
	\begin{remark}\label{determine-alpha}
		It is easy to see that for any $\alpha \in \Xi_p$, $\alpha_{N}$ determines $\alpha_n$ for all $n \leq N$.
		To uniquely determine an element $\alpha = (\alpha_n)_{n \in \N}$ in $\Xi_p$, it is sufficient to know infinitely many entries (for example, $\alpha_{2n}$ for all $n \in \N$).
		This is not the case for $\Omega_p$, as many different elements of $\Omega_p$ could agree at infinitely many entries.
		However, it is straightforward to check that if $\alpha$ and $\tilde{\alpha}$, both in $\Omega_p$ and agree at infinitely many entries, then $h(\alpha) = h(\tilde{\alpha})$ and $C^\ast(\Gamma, \Psi_\alpha) \cong C^\ast(\Gamma, \Psi_{\tilde{\alpha}})$.
	\end{remark}
	
	Recall that for any $\theta \in \R$, the rotation algebra $A_\theta$ is the universal C*-algebra generated by two unitaries $U_\theta$ and $V_\theta$ satisfying the relation $U_\theta V_\theta = e^{2 \pi i \theta} V_\theta U_\theta$.
	The following alternative characterization of the noncommutative solenoids as direct limits of rotation algebras will be crucial to us in Section \ref{S-directed systems}.
	Specifically, it allows us to build equivalence bimodules over irrational noncommutative solenoids from equivalence bimodules over irrational rotation algebras.
	
	\begin{theorem}\emph{\cite[Theorem 3.7]{LP1}}\label{direct-limit-realization}
		Let $\alpha = \left( \alpha_n \right)_{n=0}^\infty \in \Xi_p$, and for each $n \in \N$, let $\varphi_n : A_{\alpha_{2n}} \rightarrow A_{\alpha_{2n+2}}$ be the unique homomorphism given by 
		\[
		\varphi_{n}(U_{\alpha_{2n}}) = U_{\alpha_{2n+2}}^p \quad \text{and} \quad \varphi_{n}(V_{\alpha_{2n}}) = V_{\alpha_{2n+2}}^p.
		\]
		The noncommutative solenoid $\A_\alpha^\S$ is the direct limit $\varinjlim(A_{\alpha_{2n}}, \varphi_{n})$. 
	\end{theorem}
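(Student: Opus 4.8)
The plan is to build a compatible family of embeddings of the rotation algebras directly into $\A_\alpha^\S$ and then invoke the universal property of the direct limit, rather than manipulating abstract limits. Recall that $\A_\alpha^\S = C^\ast(\Gamma, \Psi_\alpha)$ with $\Gamma = \Z\left[1/p\right]\times\Z\left[1/p\right]$, and for each $n$ write $W_n = \delta_{(1/p^n,\,0)}$ and $Z_n = \delta_{(0,\,1/p^n)}$ for the canonical unitaries in $C^\ast(\Gamma,\Psi_\alpha)$ attached to the indicated group elements, so that $\delta_g\delta_h = \Psi_\alpha(g,h)\delta_{g+h}$. Using the explicit formula for $\Psi_\alpha$ from Theorem \ref{Xi_p_Psi_alpha}, a direct computation gives $W_nZ_n = e^{2\pi i \alpha_{2n}}Z_nW_n$ (the reverse product picks up trivial multiplier since the first coordinate of $Z_n$ vanishes), so by the universal property of the rotation algebra there is a unital $\ast$-homomorphism $\psi_n\colon A_{\alpha_{2n}}\to\A_\alpha^\S$ sending $U_{\alpha_{2n}}\mapsto W_n$ and $V_{\alpha_{2n}}\mapsto Z_n$.

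First I would confirm the $\varphi_n$ are genuinely defined: the pair $U_{\alpha_{2n+2}}^p, V_{\alpha_{2n+2}}^p$ satisfies $U^pV^p = e^{2\pi i p^2\alpha_{2n+2}}V^pU^p$, and the defining relations of $\Xi_p$ give $p^2\alpha_{2n+2} = \alpha_{2n} + x_{2n} + px_{2n+1} \equiv \alpha_{2n}\pmod{\Z}$, so the exponential equals $e^{2\pi i\alpha_{2n}}$ and the universal property of $A_{\alpha_{2n}}$ produces $\varphi_n$. The same congruence underlies the compatibility $\psi_{n+1}\circ\varphi_n = \psi_n$: since the intermediate multipliers are trivial (one coordinate always vanishes), one checks $W_{n+1}^p = \delta_{(p/p^{n+1},\,0)} = \delta_{(1/p^n,\,0)} = W_n$ and likewise $Z_{n+1}^p = Z_n$, so the two homomorphisms agree on generators and hence everywhere. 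Consequently the universal property of the direct limit yields a unique $\ast$-homomorphism $\Phi\colon\varinjlim(A_{\alpha_{2n}},\varphi_n)\to\A_\alpha^\S$ with $\Phi\circ\iota_n = \psi_n$ for the canonical maps $\iota_n$. Surjectivity is then immediate: the union of the ranges of the $\psi_n$ contains every $W_n$ and $Z_n$, hence contains a dense $\ast$-subalgebra of $\A_\alpha^\S$, and as the range of a $\ast$-homomorphism between C*-algebras is closed, $\Phi$ is onto.

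The essential point, and the step I expect to be the main obstacle, is injectivity of $\Phi$. Here I would exploit that $\Z^2$ and $\Gamma$ are abelian, hence amenable, so the full and reduced twisted group C*-algebras coincide and each carries its canonical tracial state, determined by $\tau(\delta_g) = 0$ for $g\neq e$, which is faithful. A short computation shows these traces are compatible: $\tau_{\A}\circ\psi_n$ is the canonical trace $\tau_n$ on $A_{\alpha_{2n}}$, and $\tau_{n+1}\circ\varphi_n = \tau_n$, so they assemble into a trace $\tau_\infty$ on the limit satisfying $\tau_{\A}\circ\Phi = \tau_\infty$. In particular each $\varphi_n$ is a trace-preserving map between faithfully traced algebras and is therefore injective, so the limit is the closed union of the images of the $A_{\alpha_{2n}}$. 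If $\Phi(a) = 0$ then $\tau_\infty(a^\ast a) = \tau_{\A}(\Phi(a)^\ast\Phi(a)) = 0$, so injectivity of $\Phi$ reduces to faithfulness of $\tau_\infty$ on the direct limit.

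Propagating faithfulness of the stagewise traces through the inductive limit is the delicate part. I would handle it by realizing $\tau_\infty$ as the vector state of the regular representation of $C^\ast_r(\Gamma,\Psi_\alpha)$ on $\ell^2(\Gamma)$, restricted compatibly to the subspaces $\ell^2\!\big((1/p^n)\Z^2\big)$ on which the $\psi_n$ act, and invoking the standard fact that the canonical trace of a reduced twisted group C*-algebra of a discrete group is faithful. Equivalently, one may phrase the entire argument as the functoriality of full twisted group C*-algebras under the direct limit of groups $(\Z^2,\ v\mapsto pv)$, whose limit is $\Gamma$ and whose multipliers are compatible with the bonding maps precisely because of the congruence $p^2\alpha_{2n+2}\equiv\alpha_{2n}\pmod{\Z}$; this identifies $\varinjlim C^\ast(\Z^2,\sigma_{\alpha_{2n}})$ with $C^\ast(\Gamma,\Psi_\alpha) = \A_\alpha^\S$ and packages both surjectivity and injectivity at once.
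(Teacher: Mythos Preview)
The paper does not supply its own proof of this statement: it is quoted verbatim as \cite[Theorem~3.7]{LP1} and used as input for the rest of the paper. There is therefore no in-paper argument to compare against.

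Your argument is correct in outline and in most details; the checks that the $\varphi_n$ are well defined, that the $\psi_n$ exist, that $\psi_{n+1}\circ\varphi_n=\psi_n$, and that $\Phi$ is surjective are all sound. The one place you make the argument harder than necessary is the injectivity step. Once you have observed that $\tau_{\A}\circ\psi_n$ equals the canonical faithful trace $\tau_n$ on $A_{\alpha_{2n}}$, you already know each $\psi_n$ is injective (if $\psi_n(a)=0$ then $\tau_n(a^\ast a)=0$, so $a=0$). An injective $\ast$-homomorphism between C*-algebras is isometric, and likewise each $\iota_n$ is isometric because the $\varphi_k$ are injective for the same reason. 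Hence for $a\in A_{\alpha_{2n}}$ one has $\|\Phi(\iota_n(a))\|=\|\psi_n(a)\|=\|a\|=\|\iota_n(a)\|$, so $\Phi$ is isometric on the dense union $\bigcup_n\iota_n(A_{\alpha_{2n}})$ and therefore on all of the direct limit. This bypasses the question of whether $\tau_\infty$ is faithful on the completed limit, which, as you note, is genuinely more delicate and not needed here. Your alternative phrasing via functoriality of (full) twisted group C*-algebras along the directed system $(\Z^2,\ v\mapsto pv)\to\Gamma$ is also valid and is closest in spirit to how this result is typically proved in the source \cite{LP1}.
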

	
	Since the rotation algebras $A_\theta$ and $A_{\theta + n}$ are isomorphic for any $n \in \Z$, we can again replace $\alpha \in \Xi_p$ with any $\tilde{\alpha} \in \Omega_p$ satisfying $h\left( \tilde{\alpha} \right) = \alpha$ without changing the direct limit defined from $\alpha$.
	When there is no confusion, we will take this alternative characterization as definition for the noncommutative solenoid.
	
	\begin{remark}
		It is shown in \cite[Proposition 3.3]{LP1} that $\A_\alpha^\S$ can also be written as $C(\S_p)\rtimes_{{\theta^\alpha}} \Z[1/p]$, where $\S_p$ is the $p$-solenoid group and the action of $\Z\left[ 1/p \right]$ on $\S_p$ is given by 
		\[
		\theta^\alpha_{\frac{j}{p^k}} \left( (z_n)_{n \in \N} \right) = \left( \exp\left( 2 \pi i \alpha_{k+n}j \right)z_n \right)_{n \in \N}.
		\]
		Hence it is a a groupoid C*-algebra corresponding to the associated transformation C*-algebra.
	\end{remark}
	
	In addition to characterizing each embedding by the generators, it will be necessary to describe it in greater detail, especially when each $A_{\alpha_{2n}}$ is identified as the crossed product C*-algebra $C\left( \T \right) \rtimes_{\alpha_{2n}} \Z$.
	For details of the following, see \cite[page 68]{Williams07}.
	
	Since $A_{\alpha_{2n}}$ is the completion of the $\ast$-algebra $C_c\left( \Z \times \T \right)$ with the appropriate convolution product and involution, it suffices for our purposes to extend the embedding $\varphi_{n} : A_{\alpha_{2n}} \rightarrow A_{\alpha_{2n + 2}}$ to the dense subset of $C_c\left( \Z \times \T \right)$ spanned by
	\[
	\{ f \delta_j \:: f \in C\left( \T \right) \text{ and } j \in \Z \},
	\]
	where
	\[f\delta_j (t, k) = \begin{cases}
		f(t) &\text{if } k = j,\\
		0 &\text{otherwise}.
	\end{cases}
	\] 
	\begin{lemma}\label{rotation-algebra-embedding-formula}
		With notation as above, for any $f \in C\left( \T \right)$ and $j \in \Z$,
		\[
		\varphi_{n}\left( f \delta_j\right) \left( t, k \right) = 
		\begin{cases}
			f(pt) & \text{ if } k = jp\\
			0 & \text{ otherwise.}
		\end{cases}
		\]
	\end{lemma}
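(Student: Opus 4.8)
The plan is to use the realization of $A_{\alpha_{2n}}$ as the crossed product $C(\T)\rtimes_{\alpha_{2n}}\Z$ and to rewrite the elementary tensors $f\delta_j$ in terms of the generators $U_{\alpha_{2n}}$ and $V_{\alpha_{2n}}$, on which $\varphi_n$ is prescribed. Under the standard identification (see \cite[page 68]{Williams07}), matching the relation $U_{\alpha_{2n}}V_{\alpha_{2n}} = e^{2\pi i \alpha_{2n}}V_{\alpha_{2n}}U_{\alpha_{2n}}$, the unitary $U_{\alpha_{2n}}$ is the function $u \in C(\T)$ with $u(t)=e^{2\pi i t}$ sitting at index $0$, so that $U_{\alpha_{2n}}^m = u^m\delta_0$, while $V_{\alpha_{2n}}$ is the canonical unitary $\delta_1$ implementing the $\Z$-action, so that $V_{\alpha_{2n}}^j = \delta_j$ for every $j\in\Z$. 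A short computation with the twisted convolution then gives $U_{\alpha_{2n}}^m V_{\alpha_{2n}}^j = u^m\delta_j$.

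First I would reduce to monomials: the functions $\{u^m : m\in\Z\}$ span a dense subspace of $C(\T)$, and both sides of the claimed identity are linear in $f$ and continuous for the C*-norm, so it suffices to verify the formula when $f=u^m$, that is, when $f\delta_j = U_{\alpha_{2n}}^m V_{\alpha_{2n}}^j$. For such an element, since $\varphi_n$ is a $\ast$-homomorphism, the defining relations give
\[
\varphi_n\bigl(U_{\alpha_{2n}}^m V_{\alpha_{2n}}^j\bigr) = \varphi_n(U_{\alpha_{2n}})^m\,\varphi_n(V_{\alpha_{2n}})^j = U_{\alpha_{2n+2}}^{pm}\,V_{\alpha_{2n+2}}^{pj}.
\]
Translating back into the crossed-product picture of $A_{\alpha_{2n+2}}$, I have $U_{\alpha_{2n+2}}^{pm} = u^{pm}\delta_0$ and $V_{\alpha_{2n+2}}^{pj} = \delta_{pj}$, whose convolution product is $u^{pm}\delta_{pj}$. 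Because $u^{pm}(t) = e^{2\pi i\,pmt} = u^m(pt)$, the coefficient function is precisely $t\mapsto f(pt)$ and the $\Z$-index has passed from $j$ to $pj$; this is the asserted formula for $f=u^m$, and extending by linearity and continuity delivers it for all $f\in C(\T)$.

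The argument is mostly bookkeeping, and its only genuine content is the observation that raising the position unitary $U$ to the $p$-th power implements the $p$-fold covering $t\mapsto pt$ of the circle, which produces the $f(pt)$, while the parallel passage $V^j\mapsto V^{pj}$ multiplies the $\Z$-index by $p$. The one step I expect to require care is the density reduction, which is what guarantees that the identity, checked on trigonometric polynomials, continues to hold for arbitrary continuous $f$.
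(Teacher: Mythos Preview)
Your argument is correct and follows essentially the same route as the paper: reduce to trigonometric monomials by density, express $f\delta_j$ in terms of the two generating unitaries, apply $\varphi_n$ (which sends each generator to its $p$-th power), and translate back. The only discrepancy is a labeling swap: in the paper's conventions one has $U_{\alpha_{2n}} = 1_{C(\T)}\delta_1$ (the shift) and $V_{\alpha_{2n}} = \iota_\T\delta_0$ (the position unitary), the reverse of your assignment; since $\varphi_n$ raises both generators to the $p$-th power this is harmless for the argument, but you should align the names with the paper.
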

	
	\begin{proof}
		In this realization, the generators $U_{\alpha_{2n}}$ and $V_{\alpha_{2n}}$ are given by $1_{C(\T)} \delta_1$ and $\iota_{\T}\delta_0$, respectively, where $1_{C(\T)}(t) = 1$ and $\iota_{\T}(t) = e^{2 \pi i t}$ for all $t \in \R$.
		Following the proof of \cite[Proposition 2.56]{Williams07}, we denote by $i_{C(\T)}$ the embedding of $C(\T)$ into $C_c\left( \T \times \Z \right)$ given by $f \mapsto f \delta_0$, then $f \delta_j = i_{C(\T)}(f) \ast U_{\alpha_{2n}}^j$ for any $j \in \Z$. 
		Since $\{ V_{\alpha_{2n}}^m : m \in \Z \}$ spans a dense subalgebra of $i_{C(\T)}\left( C(\T) \right)$, and for any $(t, k) \in \R \times \Z$, 
		\begin{align*}
			\varphi_{n}\left( {V_{\alpha_{2n}}^m} \right) (t, k) = V_{\alpha_{2n+2}}^{mp}(t, k) = V_{\alpha_{2n+2}}^m \left( pt, k \right),
		\end{align*}
		it follows that $\varphi_{n}\left( i_{C(\T)}\left( f \right) \right)(t) = f\left( pt \right) \delta_0$ for any $f \in C\left( \T \right)$.
		We compute that for any $(t, k) \in \R \times \Z$, 
		\begin{align*}
			\varphi_{n}\left( f \delta_j \right)\left( t, k\right) &= \varphi_{n}\left( i_{C(\T)}(f) \ast U_{\alpha_{2n}}^j \right) (t, k)\\
			&= \left( \varphi_{n}\left( i_{C(\T)} \left( f \right) \right) \ast \varphi_{n}\left( U_{\alpha_{2n}}^j \right) \right) (t, k) \\
			&= \left( \varphi_{n}\left( i_{C(\T)} \left( f \right) \right) \ast U_{\alpha_{2n+2}}^{pj} \right) (t, k) \\
			&= \left( \varphi_{n}\left( i_{C(\T)} \left( f \right) \right) \ast \delta_{pj} \right) (t,k) \\
			&= f \delta_{pj}\left( pt, k \right)
		\end{align*}
	\end{proof}
	
	Extending by linearity, we have that for any \textit{finite} sum $\sum_{m \in \Z} f_m \delta_m$, 
	\[
	\varphi_{n}\left( \sum_{m \in \Z} f_m \delta_m \right)(t, k) = \sum_{m \in \Z} f_m \delta_{pm}(pt, k).
	\]
	
	
	\section{Forming Heisenberg Bimodules over Noncommutative Solenoids}\label{S-heisenberg bimodules over NS}
	
	\subsection{The field of $p$-adic numbers $\Q_p$}\label{SS-p-adic numbers}
	
	We first take the standard algebraic approach to define the field of $p$-adic numbers and refer to Chapter 1 of \cite{Robert00} for a more extensive exposition.
	For a fixed prime $p$, recall that a \emph{$p$-adic integer} is a formal series $a = \sum_{j = 0}^\infty a_j p^j$ with integral coefficients $a_j$ satisfying $0 \leq a_j \leq p-1$.
	Under the usual addition and multiplication, the ring of $p$-adic integers form an integral domain with additive identity $0 = \sum_{j = 0}^\infty 0 \cdot p^j$, and multiplicative identity $1 = 1 + \sum_{j = 1}^\infty 0 \cdot p^j$.
	If $a \neq 0$, then there exists a unique natural number $v$ such that $a_v \neq 0$ and $a_j = 0$ for all $j < v$.
	We call $v$ the \emph{order of $a$} and denote it by $\ord(a)$, with the usual convention that $\ord(0) = \infty$.

	\begin{lemma}
		The group $\Z_p^\times$ of invertible $p$-adic integers consists exactly of the $p$-adic integers of order $0$. That is, $a \in \Z_p^\times$ if and only if $a_0 \neq 0$.
	\end{lemma}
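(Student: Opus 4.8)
The plan is to prove the two implications separately. The forward direction (invertibility forces $a_0 \neq 0$) follows from a direct inspection of the constant term of a product, while the converse ($a_0 \neq 0$ forces invertibility) is the substantive part and proceeds by constructing the inverse one coefficient at a time, in the spirit of a successive-approximation (Hensel-type) argument.

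For the forward direction, suppose $a = \sum_{j=0}^\infty a_j p^j$ is invertible with inverse $b = \sum_{j=0}^\infty b_j p^j$, and write $ab = \sum_{j=0}^\infty c_j p^j$. The key observation is that the constant term of a product depends only on the constant terms of the factors: since every higher term is divisible by $p$, carrying affects only $c_j$ for $j \geq 1$, so the raw coefficient at $p^0$ is $a_0 b_0$, giving $c_0 \equiv a_0 b_0 \pmod p$ with $0 \leq c_0 \leq p-1$. If $\ord(a) > 0$, i.e.\ $a_0 = 0$, then $c_0 = 0$; but $ab = 1$ forces $c_0 = 1$, a contradiction. Hence $a_0 \neq 0$.

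For the converse, assume $a_0 \neq 0$. Because $p$ is prime and $1 \leq a_0 \leq p-1$, the residue $a_0$ is a unit in $\Z/p\Z$, so there is a unique $b_0 \in \{1, \dotsc, p-1\}$ with $a_0 b_0 \equiv 1 \pmod p$. I would then build the remaining coefficients recursively: assuming $b_0, \dotsc, b_{n-1}$ have been chosen so that the partial inverse $b^{(n-1)} := \sum_{j=0}^{n-1} b_j p^j$ satisfies $a\, b^{(n-1)} \equiv 1 \pmod{p^n}$, write $a\, b^{(n-1)} = 1 + p^n r$ for a $p$-adic integer $r$ with constant term $r_0$. Setting $b^{(n)} := b^{(n-1)} + b_n p^n$, one computes $a\, b^{(n)} = 1 + p^n\!\left( r + a b_n \right)$, so the congruence $a\, b^{(n)} \equiv 1 \pmod{p^{n+1}}$ holds precisely when $a_0 b_n \equiv -r_0 \pmod p$. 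Since $a_0$ is a unit mod $p$, this linear congruence has a unique solution $b_n \in \{0, \dotsc, p-1\}$, completing the recursion.

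Finally, the partial sums $b^{(n)}$ agree in their first $n+1$ coefficients, so they define a genuine $p$-adic integer $b = \sum_{j=0}^\infty b_j p^j$, and by construction $ab \equiv 1 \pmod{p^{m}}$ for every $m$, whence $ab = 1$ and $a \in \Z_p^\times$. The hard part will be this converse direction: one must handle the carries correctly when multiplying formal series so that the reduction $a\, b^{(n-1)} = 1 + p^n r$ is justified and the inductive step isolates a single solvable congruence for $b_n$. Everything else reduces to the fact that $\Z/p\Z$ is a field, which is exactly where the primality of $p$ enters.
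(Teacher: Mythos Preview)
Your argument is correct: the constant-term observation handles the forward implication cleanly, and the recursive construction of the inverse via successive congruences modulo $p^{n+1}$ is a valid Hensel-type lifting, with primality of $p$ entering exactly where you indicate. The paper itself does not prove this lemma at all---it simply records the statement as well known---so there is no approach to compare against; you have supplied a complete proof where the paper defers to standard references.
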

    \begin{proof}
    	This is well known.
    \end{proof}
	
	The field of \emph{$p$-adic numbers} is then defined to be the field of fractions of $\Z_p$.
	Each nonzero $p$-adic number $a$ can be uniquely written as  $a = \sum_{j = v}^\infty a_j p^j$ with $v < \infty$ possibly negative, $a_v \neq 0$, and $a_j \in \left\{ 0, 1, \dotsc, p-1 \right\}$ for all $j$.
	The \emph{fractional part} of a $p$-adic number is given by
	\[
	\left\{ a \right\}_p = \sum_{j = v}^{-1} a_j p^j \in \Z\left[ 1/p \right].
	\]
	Naturally, $\left\{ a \right\}_p = 0$ if and only if $a$ is a $p$-adic integer.
	Extending the definition of order for $p$-adic integers, the order of $a$ is given by $\ord(a) = v$. For each $a \in \Q_p$, if $\ord\left( a \right) = v$, then it is easy to deduce that $\ord\left( a^{-1} \right) = -v$.
	
	Alternatively, taking an analytic approach, $\Q_p$ is the completion of $\Q$ with respect to the \emph{$p$-adic absolute value} $| \cdot |_p$: for $a = p^k \left( m/n \right) \in \Q$ with $m$ and $n$ both indivisible by $p$ (then $k$ is necessarily unique), $|a|_p = p^{-k}$.
	We note that the inclusion of $\Q$ into $\Q_p$ is an injective ring homomorphism with image exactly the set of $p$-adic numbers with coefficients that are eventually periodic.
	Moreover, $p$-adic arithmetic extends the ordinary arithmetic of the rationals.
	
	\begin{remark}\label{remark-identify-rationals}
		Unless otherwise specified, we identify a rational number with its image in the $p$-adic numbers. This also means that when convenient, we identify elements of $\Z\left[ 1/p \right]$ with the corresponding $p$-adic numbers.
	\end{remark}
	
	The following lemmas will be useful. 
	
	\begin{lemma}\label{fractional-part-congruence} For $x = \sum_{j=v}^{\infty} x_j p^j \in \Q_p$, with $v = \ord(x) < \infty$,
		\[
		\left\{ x s_1 s_2 \right\}_p \equiv \left( \sum_{j = v}^{k_1 + k_2 - 1} x_j p^j \right) \cdot s_1 s_2 \mod \Z,
		\]
		where $s_i = \frac{j_i}{p^{k_i}} \in \Z\left[ \frac{1}{p} \right]$ for $i = 1, 2$. 
	\end{lemma}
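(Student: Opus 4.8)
The plan is to split $x$ at its $p$-adic digit of index $K := k_1 + k_2$ and to show that, after multiplication by $s_1 s_2 = j_1 j_2 / p^K$, only the finitely many low-order digits of $x$ survive modulo $\Z$. First I would write
\[
x = A + B, \qquad A = \sum_{j=v}^{K-1} x_j p^j \in \Z\left[ 1/p \right], \qquad B = \sum_{j=K}^\infty x_j p^j,
\]
where the sum defining $A$ is empty (hence $A = 0$) when $v \geq K$, per the stated summation convention. By construction $\ord(B) \geq K$, so $B/p^K = \sum_{i=0}^\infty x_{i+K} p^i$ is a $p$-adic integer; since $j_1 j_2 \in \Z$, the product $B s_1 s_2 = j_1 j_2 \cdot (B/p^K)$ is again a $p$-adic integer, and therefore $\left\{ B s_1 s_2 \right\}_p = 0$.

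The crux is the additivity of the fractional part modulo $\Z$: for all $a, b \in \Q_p$,
\[
\left\{ a + b \right\}_p \equiv \left\{ a \right\}_p + \left\{ b \right\}_p \pmod \Z.
\]
I would deduce this from the two basic facts that $a - \left\{ a \right\}_p \in \Z_p$ for every $a \in \Q_p$ and that $\Z\left[ 1/p \right] \cap \Z_p = \Z$ (a rational with denominator a power of $p$ that is simultaneously a $p$-adic integer must be an ordinary integer). Indeed, $c := \left\{ a \right\}_p + \left\{ b \right\}_p - \left\{ a + b \right\}_p$ lies in $\Z_p$ by the first fact and in $\Z\left[ 1/p \right]$ because each fractional part does, whence $c \in \Z$ by the second. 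The same intersection identity gives $\left\{ y \right\}_p \equiv y \pmod \Z$ for every $y \in \Z\left[ 1/p \right]$.

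Putting these together, applying the additivity to $a = A s_1 s_2$ and $b = B s_1 s_2$ and using $\left\{ B s_1 s_2 \right\}_p = 0$ yields $\left\{ x s_1 s_2 \right\}_p \equiv \left\{ A s_1 s_2 \right\}_p \pmod \Z$; and since $A s_1 s_2 \in \Z\left[ 1/p \right]$, the second consequence above replaces $\left\{ A s_1 s_2 \right\}_p$ by $A s_1 s_2$, giving exactly the asserted congruence. I expect the only genuine obstacle to be the careful justification of the intersection identity $\Z\left[ 1/p \right] \cap \Z_p = \Z$, which silently powers both the additivity of the fractional part and the final replacement step; the digit-level splitting of $x$ and the verification that the tail $B s_1 s_2$ is integral are otherwise routine.
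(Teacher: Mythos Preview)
Your proof is correct and follows essentially the same idea as the paper's: split $x$ at the $K = k_1 + k_2$ digit and observe that the tail, after multiplication by $s_1 s_2$, lies in $\Z_p$ and hence contributes nothing to the fractional part modulo $\Z$. The paper's version is terser---it rewrites both sides with the factor $p^{-(k_1+k_2)}$ pulled through and then simply asserts ``it is now visible that'' the congruence holds---whereas you make explicit the two ingredients the paper leaves implicit: the additivity $\{a+b\}_p \equiv \{a\}_p + \{b\}_p \pmod \Z$ and the intersection $\Z[1/p] \cap \Z_p = \Z$. Your treatment also handles the case $v \geq K$ uniformly via the empty-sum convention, while the paper disposes of it separately at the outset.
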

	
	\begin{proof}
		This is trivial if $v \geq k_1 + k_2$, in which case $x \in \Z_p$ and $\left\{ x s_1 s_2 \right\}_p = \left( \sum_{j = v}^{k_1 + k_2 - 1} x_j p^j \right) \cdot s_1 s_2 = 0$.
		For $v < k_1 + k_2$, observe that 
		\[
		\left\{ x s_1 s_2 \right\}_p = \left\{ \left( \sum_{j = v}^{\infty} x_j p^{j - \left( k_1 + k_2 \right)} \right) j_1 j_2  \right\}_p 
		\] 
		and
		\[
		\left( \sum_{j = v}^{k_1 + k_2 - 1} x_j p^j \right) \cdot s_1 s_2 = \left( \sum_{j = v}^{k_1 + k_2 - 1} x_j p^{j - \left( k_1 + k_2 \right)}  \right) j_1 j_2.
		\]
		It is now visible that $\left\{ x s_1 s_2 \right\}_p \equiv \left( \sum_{j = 0}^{k_1 + k_2 - 1} x_j p^j \right) \cdot s_1 s_2 \mod \Z.$
	\end{proof}
    
    \begin{lemma}\label{inverse-multiply-one}
    	Let $x = \sum_{j = v}^{\infty} x_j p^j$ be a $p$-adic integer with $0 \leq \ord(x) = v < \infty$ and inverse $x^{-1} = \sum_{j = -v}^{\infty} y_j p^j$.
    	Then for all nonnegative integer $k$,
    	\[
    	\left(  \sum_{j = -v}^{-v+k} y_j p^{j+v}  \right)\left(  \sum_{j = v}^{v+k} x_{j} p^{j - v} \right) \equiv 1 \mod p^{k + 1}.
    	\]
    \end{lemma}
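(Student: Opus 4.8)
The plan is to recognize the two displayed factors as the degree-$k$ truncations of a pair of reciprocal $p$-adic integers, so that the congruence reduces to the elementary fact that multiplying the two truncations and reducing modulo $p^{k+1}$ agrees with the reduction of the full product.

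First I would rescale to remove the index shifts. Since $x$ has order $v \geq 0$, the series $u := p^{-v} x = \sum_{i=0}^{\infty} x_{i+v} p^i$ is a $p$-adic integer with nonzero constant term $x_v$, hence $u \in \Z_p^\times$ by the lemma characterizing units as the order-$0$ elements; likewise $w := p^{v} x^{-1} = \sum_{i=0}^{\infty} y_{i-v} p^i$ lies in $\Z_p^\times$ because $x^{-1}$ has order $-v$. Crucially, $uw = (p^{-v}x)(p^{v}x^{-1}) = x x^{-1} = 1$. After reindexing ($i = j - v$ and $i = j + v$, respectively), the two sums appearing in the lemma are exactly the partial sums of $u$ and $w$ through the $p^k$ term:
\[
\sum_{j=v}^{v+k} x_j p^{j-v} = \sum_{i=0}^{k} x_{i+v} p^i =: U_k, \qquad \sum_{j=-v}^{-v+k} y_j p^{j+v} = \sum_{i=0}^{k} y_{i-v} p^i =: W_k.
\]

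The second step is the truncation argument. Writing $u = U_k + p^{k+1} u'$ and $w = W_k + p^{k+1} w'$, where $u', w' \in \Z_p$ are the respective tails divided by $p^{k+1}$, I would expand
\[
1 = uw = U_k W_k + p^{k+1}\left( U_k w' + W_k u' + p^{k+1} u' w' \right).
\]
Since the parenthesized factor is a $p$-adic integer, this yields $U_k W_k \equiv 1 \mod p^{k+1}$, which is precisely the claim.

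I do not expect a genuine obstacle: the entire content is the single identity $uw = 1$ together with the divisibilities $p^{k+1} \mid (u - U_k)$ and $p^{k+1} \mid (w - W_k)$, both immediate from the series expansions. The only care needed is bookkeeping with the shifts by $\pm v$ and confirming that $U_k$ and $W_k$, read as finite sums in $\Z\left[ 1/p \right]$, are in fact integers so that reduction modulo $p^{k+1}$ is meaningful; this holds because the exponents $j - v$ and $j + v$ are all nonnegative over the stated summation ranges.
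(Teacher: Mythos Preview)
Your proposal is correct and follows essentially the same route as the paper: both reduce to the order-zero case by reindexing (your $u=p^{-v}x$, $w=p^{v}x^{-1}$ are the paper's $\sum \tilde{x}_j p^j$ and $\sum \tilde{y}_j p^j$), and then invoke the fact that the degree-$k$ truncations of a unit and its inverse multiply to $1$ modulo $p^{k+1}$. The only difference is that you actually write out the expansion $1=uw=U_kW_k+p^{k+1}(\cdots)$ to justify this last step, whereas the paper simply asserts it as well known.
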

    
    \begin{proof}
    	This is equivalent to
    	\[
    	\left(  \sum_{ j = 0}^{N} \tilde{y}_j p^j  \right)\left(  \sum_{ j = 0}^{N} \tilde{x}_j p^j \right) \equiv 1 \mod p^{N+1},
    	\]
    	for all nonnegative integer $N$, which holds for any invertible $p$-adic integer $x =  \sum_{j = 0}^{\infty} \tilde{x}_j p^j$ with inverse $x^{-1} =  \sum_{j = 0}^{\infty} \tilde{y}_j p^j$.
    \end{proof}

	
	\subsection{Heisenberg bimodules of Rieffel}\label{SS-Heisenberg bimodules}
	
	In this section, we only assume that $\alpha = \left( \alpha_n \right)_{n \in \N}$ is a sequence in $\Xi_p$ with $\alpha_0 \neq 0$.
	Using a construction of Rieffel \cite{Rieffel88} known as the \emph{Heisenberg bimodules}, explicit $\A_\alpha^\S$-$\mathcal{B}$-equivalence bimodules are constructed in \cite{LP2}, where $\mathcal{B}$ is isomorphic to a noncommutative solenoid.
	We will summarize the construction when applied to noncommutative solenoids and give a formula for $\beta \in \Omega_p$ such that $\mathcal{B} \cong \A_\beta^\S$ in Theorem \ref{beta-formula-general} at the end of the section.
	
	As before, we denote by $\Gamma$ the discrete group $\Z\left[ 1/p \right] \times \Z\left[ 1/p \right]$. Consider the group $M = \Q_p \times \R$. Since both $\Q_p$ and $\R$ are self-dual, $M$ is self-dual.
	Specifically, It is shown in \cite{LP2} that every character of $M$ is given by
	\[
	\chi_{(x, r)} : \begin{cases}
		\Q_p \times \R &\rightarrow \T\\
		\left( q, t \right) &\mapsto \chi_x\left( q \right) \chi_r\left( t \right),
	\end{cases}
	\]
	for some $(x, r) \in \Q_p \times \R$, where 
	\[
	\chi_x :  \begin{cases}
		\Q_p &\rightarrow \T \\
		q &\mapsto e^{2 \pi i \left\{ x q \right\}_p}
	\end{cases}
	\quad \text{and} \quad 
	\chi_r : \begin{cases}
	\R &\rightarrow \T \\
	t &\mapsto e^{2 \pi i r t }
	\end{cases}
	\]
	are characters of $\Q_p$ and $\R$, respectively. Again, $\{ x q \}_p$ is the fractional part of the $p$-adic number $x q$. 
	
	Following Rieffel \cite{Rieffel88}, the \textit{Heisenberg multiplier} $\eta : (M \times \hat{M}) \times (M \times \hat{M}) \rightarrow \T$ is defined to be
	\[
	\eta : \begin{cases}
	\qquad \quad [\Q_p \times \R]^2 \times [\Q_p \times \R]^2 &\rightarrow \T,\\
	\left( [(q_1, r_1), (q_2, r_2)], [(q_3, r_3), (q_4, r_4)] \right) &\mapsto e^{2 \pi i r_1 r_4} e^{2 \pi i \left\{ q_1 q_4 \right\}_p}.
	\end{cases}
	\]
	
	The \textit{symmetrized version} of $\eta$, denoted by $\rho$, is the following multiplier on $[\Q_p \times \R]^2$:
	\begin{multline*}
		\rho \left( [(q_1, r_1), (q_2, r_2)], [(q_3, r_3), (q_4, r_4)] \right)\\	
		= \eta \left( [(q_1, r_1), (q_2, r_2)], [(q_3, r_3), (q_4, r_4)] \right) \bar{\eta \left( [(q_3, r_3), (q_4, r_4)], [(q_1, r_1), (q_2, r_2)]\right)},
	\end{multline*}
	with $(q_i, r_i) \in \Q_p \times \R$ for $i = 1, 2, 3, 4$.

	Now, for each pair of $x \in \Q_p \setminus \{ 0 \}$ and $\theta \in \R \setminus \{ 0 \}$, let $\iota_{x, \theta}: \Gamma \rightarrow M \times \hat{M} \cong M \times M$ be the embedding of $\Gamma$ as a lattice (discrete cocompact subgroup) into $\left[ \Q_p \times \R \right]^2$ given by 
	\[
	\iota_{x, \theta} \left( r_1, r_2 \right) = \left[ (x \cdot r_1, \theta \cdot r_1), (r_2, r_2) \right].
	\]
	We denote the image of $\iota_{x, \theta}$ in $M \times \hat{M}$ by $D_{x, \theta}$.
	For each such embedding, the so-called \textit{annihilator of $D_{x, \theta}$} is defined as
	\[
	D_{x, \theta}^\perp := \left\{ [(q_1, s_1), (q_2, s_2)] \in M \times \hat{M} \:: \forall r_1, r_2 \in \Z\left[ \dfrac{1}{p} \right], \rho\left( \iota_{x, \theta}(r_1, r_2), [(q_1, s_1), (q_2, s_2)] \right) = 1 \right\}.
	\]
	
	Since $D_{x, \theta}$ is a lattice in $G$, so is $D_{x, \theta}^\perp$ \cite[Lemma 3.1]{Rieffel88}.
	
	Applying the result in \cite[Theorem 2.15]{Rieffel88} to $M = \Q_p \times \R$, $D_{x, \theta}$ and $D_{x, \theta}^\perp$, we have that $C_c(M)$, suitably completed, has the structure of a $C^\ast(D_{x, \theta}, \eta)$-$C^\ast(D_{x, \theta}^\perp, \overline{\eta})$-equivalence bimodule and implements the Morita equivalence between $C^\ast(D_{x, \theta}, \eta)$ and $C^\ast(D_{x, \theta}^\perp, \overline{\eta})$.
	We now state the main theorem of this section.
	
	\begin{theorem}\label{beta-formula-nonzero}
		Let $p$ be prime and let $\alpha = (\alpha_j)_{j \in \N} \in \Omega_p$ such that $p\alpha_{j+1} = \alpha_j + x_j$ with $\alpha_0 \neq 0$ and $x_0 \neq 0$.
		Then $C^\ast(D_{x, \theta}, \eta)$ is isomorphic to the noncommutative solenoid $\A_\alpha^\S$, where $x = \sum_{j=0}^{\infty} x_j p^j \in \Z_p$ and $\theta = \alpha_0$.
		Since $x_0 \neq 0$, we write (uniquely) $x^{-1} = \sum_{j = 0}^{\infty} y_j p^j \in \Z_p$.
		Via the Heisenberg equivalence bimodule of Rieffel, $\A_\alpha^\S$ is Morita equivalent to a noncommutative solenoid $\A_\beta^\S$, where $\beta = \left( \beta_n \right)_{ n \in \N}$ and 
		\[
		\beta = \left( \dfrac{1}{\theta}, \dfrac{1}{\theta p} + \frac{y_0}{p}, \dfrac{1}{\theta p^2} + \dfrac{y_0 + y_1 p}{p^2}, \dotsc, \beta_n, \dotsc \right), \quad \beta_n = \dfrac{1}{\theta p^n} + \dfrac{\sum_{j = 0}^{n-1} y_j p^j}{p^n}.
		\]
	\end{theorem}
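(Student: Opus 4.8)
The plan is to carry out the entire argument at the level of pulled-back multipliers, using the classification of cocycles in Theorem \ref{Xi_p_Psi_alpha}. The isomorphism $C^\ast(D_{x,\theta},\eta)\cong\A_\alpha^\S$ is the content of \cite{LP2}, but I would reprove it in the present notation as a warm-up, since the same computation drives the rest: pulling $\eta$ back along $\iota_{x,\theta}$ gives $\exp\!\big(2\pi i\,[\theta r_1 r_2' + \{x r_1 r_2'\}_p]\big)$, and applying Lemma \ref{fractional-part-congruence} to $\{x r_1 r_2'\}_p$ (note $\ord(x)=0$ as $x_0\neq 0$) together with the closed form $\alpha_k=\tfrac{\theta}{p^k}+\tfrac{1}{p^k}\sum_{j=0}^{k-1}x_j p^j$ recovers $\Psi_\alpha$. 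The whole novelty is then to identify the right-hand algebra $C^\ast(D_{x,\theta}^\perp,\overline\eta)$.

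First I would compute $D_{x,\theta}^\perp$ explicitly. Writing a general element as $b=[(q_1,s_1),(q_2,s_2)]$ and evaluating the symmetrized multiplier, one finds that the exponent of $\rho(\iota_{x,\theta}(r_1,r_2),b)$ splits into an $r_1$-part and an $r_2$-part. Setting $r_2=0$ and then $r_1=0$ shows that $b\in D_{x,\theta}^\perp$ exactly when the character $\chi_{(q_1,s_1)}$ of $M$ annihilates the diagonal sublattice $\Delta=\{(r,r):r\in\Z[1/p]\}$ and $\chi_{(q_2,s_2)}$ annihilates the slanted sublattice $L=\{(x r,\theta r):r\in\Z[1/p]\}$. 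Hence $D_{x,\theta}^\perp=\Delta^\perp\times L^\perp$ inside $M\times\hat M$.

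Next I would solve for the two coordinate annihilators. For $\Delta^\perp$ the defining condition is $\{q_1 r\}_p+s_1 r\in\Z$ for all $r\in\Z[1/p]$; the elementary identity $\{v\}_p\equiv v\pmod\Z$ for $v\in\Z[1/p]$ (which holds because $\Z[1/p]\cap\Z_p=\Z$) shows each $(-s,s)$ with $s\in\Z[1/p]$ qualifies, and the reverse inclusion follows from $\Delta^{\perp\perp}=\Delta$, so $\Delta^\perp=\{(-s,s):s\in\Z[1/p]\}$. For $L^\perp$ I would note that $\chi_{(q_2,s_2)}$ kills $L$ precisely when $(q_2 x,\theta s_2)\in\Delta^\perp$, since $\{q_2 x r\}_p=\{(q_2 x)r\}_p$ and $s_2\theta r=(\theta s_2)r$; using $x\in\Z_p^\times$ this gives $L^\perp=\{(-s x^{-1},s/\theta):s\in\Z[1/p]\}$. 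Together these furnish a group isomorphism $\kappa:\Gamma\to D_{x,\theta}^\perp$, $\kappa(m,n)=[(-m,m),(-n x^{-1},n/\theta)]$.

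Finally I would pull $\overline\eta$ back along $\kappa$. A direct evaluation yields $\overline\eta(\kappa(m,n),\kappa(m',n'))=\exp(-2\pi i\,mn'/\theta)\exp(-2\pi i\{mn' x^{-1}\}_p)$, which depends only on $m$ and $n'$, exactly matching the bilinear shape of $\Psi_\beta$. Writing $m=j_1/p^{k_1}$ and $n'=j_4/p^{k_4}$ and applying Lemma \ref{fractional-part-congruence} to $\{x^{-1}mn'\}_p$ (here $\ord(x^{-1})=0$), the exponent collapses to $-\beta_{k_1+k_4}\,j_1 j_4\pmod\Z$ with $\beta_n=\tfrac{1}{\theta p^n}+\tfrac{1}{p^n}\sum_{j=0}^{n-1}y_j p^j$; comparison with Theorem \ref{Xi_p_Psi_alpha} identifies this cocycle with $\Psi_\beta$, the overall sign being absorbed by the conjugation isomorphism $\A_{-\beta}^\S\cong\A_\beta^\S$. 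The relation $p\beta_{n+1}=\beta_n+y_n$ with $y_n\in\{0,\dots,p-1\}$ (the digits of $x^{-1}$) then confirms $\beta\in\Omega_p$. I expect the determination of $L^\perp$ and the accompanying $p$-adic bookkeeping to be the main obstacle: it is where the inverse $x^{-1}$ must be made to appear with exactly its digits $y_j$, and where Lemma \ref{fractional-part-congruence} (with Lemma \ref{inverse-multiply-one} available to control the truncations of $x$ and $x^{-1}$) does the essential work.
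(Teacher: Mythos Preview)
Your proposal is correct and follows essentially the same route as the paper: identify $D_{x,\theta}^\perp$ explicitly, pull $\bar\eta$ back to a multiplier on $\Gamma$, and invoke Lemma~\ref{fractional-part-congruence} to match it with $\Psi_\beta$. The only cosmetic differences are that the paper quotes \cite[Lemma~5.5]{LP2} for $D_{x,\theta}^\perp$ rather than rederiving it via your $\Delta^\perp\times L^\perp$ decomposition, and it chooses the parametrization $\lambda_{x,\theta}(s_1,s_2)=[(s_1,-s_1),(-x^{-1}s_2,s_2/\theta)]$ (your $\kappa$ with $m\mapsto -s_1$), which together with the identity $\{-a\}_p\equiv -\{a\}_p\pmod\Z$ lands directly on $\Psi_\beta$ instead of $\Psi_{-\beta}$, so the extra appeal to $\A_{-\beta}^\S\cong\A_\beta^\S$ is not needed.
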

	\begin{proof}
		The statement of this theorem gives an explicit formula for the $\beta$ in \cite[Theorem 5.6]{LP2}, with the additional assumption that $x_0 \neq 0$.
		We note that in the proof of the aforementioned theorem, one only needs to assume $\alpha_0 \neq 0$.
		This means that $\A_\alpha^\S$ is isomorphic $C^\ast\left( D_{x, \theta}, \eta \right)$ even when $\alpha$ is taken from $\Omega_p$.
		We simply need to show that $C^\ast\left( D_{x, \theta}^\perp, \bar{\eta} \right)$ is isomorphic to $\A_\beta^\S$ to conclude that the two noncommutative solenoids $\A_\alpha^\S$ and $\A_\beta^\S$ are Morita equivalent due to Rieffel's method.
		
		It is shown in \cite[Lemma 5.5]{LP2} that
		\[
		D_{x, \theta}^\perp = \left\{ \left[ (s_1, -s_1), \left( x^{-1}s_2, -\frac{s_2}{\theta} \right) \right] : s_1, s_2 \in \Z\left[ \frac{1}{p} \right] \right\}.
		\]
		For convenience, we choose to substitute $s_2$ with $-s_2$ and write 
		\[
		D_{x, \theta}^\perp = \left\{ \left[ (s_1, -s_1), \left( -x^{-1}s_2, \frac{s_2}{\theta} \right) \right] : s_1, s_2 \in \Z\left[ \frac{1}{p} \right] \right\}.
		\]
		Let $\lambda_{x, \theta}$ denote the embedding of $\Z\left[\frac{1}{p} \right] \times \Z\left[ \frac{1}{p} \right]$ into $\left[ \Q_p \times \R \right]^2$, so that the image of $\lambda_{x, \theta}$ is $D_{x, \theta}^\perp$. We then have 
		
		\begin{align*}
		\bar{\eta} \left( \lambda_{x, \theta}(s_1,s_2), \lambda_{x, \theta}(s_3, s_4) \right) &= \bar{\eta} \left( \left[(s_1, -s_1), \left( -x^{-1}s_2, \frac{s_2}{\theta} \right)\right], \left[(s_3, -s_3), \left( -x^{-1}s_4, \frac{s_4}{\theta} \right)\right] \right)\\
		&= e^{2 \pi i \frac{1}{\theta} s_1 s_4 } e^{ - 2 \pi i \{ - x^{-1} s_1 s_4\}_p }
		\end{align*}

		Note that for a $p$-adic number $x = \sum_{j = v}^\infty x_j p^j$ of order $v$,
		\[
		-x = (p - x_v)p^v + (p - 1 - x_{v+1}) p^{v+1} + \cdots + (p - 1 - x_j) p^j + \cdots
		\]
		It is clear that if $x \in \Z_p$, then $\{ x \}_p = \{ -x \}_p = 0$. If $x \in \Q_p \setminus \Z_p$, then $\{ x \}_p + \{ - x\}_p = 1$. It follows that for $-x^{-1} s_1 s_4 \in \Z_p$, either $\{ - x^{-1} s_1 s_4\}_p = \{ x^{-1} s_1 s_4\}_p$ or $\{ - x^{-1} s_1 s_4\}_p = 1 - \{ x^{-1} s_1 s_4\}_p$. In either case, we have 
		\begin{align*}
		\bar{\eta} \left( \left[(s_1, -s_1), \left( -x^{-1}s_2, \frac{s_2}{\theta} \right)\right], \left[(s_3, -s_3), \left( -x^{-1}s_4, \frac{s_4}{\theta} \right)\right] \right) &= e^{2 \pi i \frac{1}{\theta} s_1 s_4 } e^{ - 2 \pi i \{ - x^{-1} s_1 s_4\}_p }\\
		&= e^{2 \pi i \frac{1}{\theta} s_1 s_4 } e^{2 \pi i \{ x^{-1} s_1 s_4\}_p}
		\end{align*}

		By Theorem \ref{Xi_p_Psi_alpha}, the multiplier $\Psi_{\beta}$ on $\Z\left[ \frac{1}{p} \right] \times \Z\left[ \frac{1}{p} \right]$ is given by:
		\begin{align*}
		\Psi_\beta \left( \left( \frac{j_1}{p^{k_1}}, \frac{j_2}{p^{k_2}} \right), \left( \frac{j_3}{p^{k_3}} ,\frac{j_4}{p^{k_4}} \right) \right) &= \exp \left( 2 \pi i \beta_{(k_1+k_4)} j_1 j_4 \right)\\
		&= \exp \left( 2 \pi i \left( \dfrac{1}{\theta p^{k_1 + k_4}} + \dfrac{\sum_{j = 0}^{k_1 + k_4 - 1} y_j p^j}{p^{k_1 + k_4}} \right) j_1 j_4 \right)\\
		&= \exp \left( 2\pi i \dfrac{j_1 j_4}{\theta p^{k_1 + k_4}} \right) \cdot \exp \left( 2 \pi i \dfrac{\sum_{j = 0}^{k_1 + k_4 - 1} y_j p^j}{p^{k_1 + k_4}}  j_1 j_4 \right)\\
		&= \exp \left( 2\pi i \frac{1}{\theta} s_1 s_4 \right) \cdot \exp \left( 2 \pi i \left( \sum_{j = 0}^{k_1 + k_4 - 1} y_j p^j \right) s_1 s_4 \right)
		\end{align*} 
		
		By the Lemma \ref{fractional-part-congruence}, $\left\{ x^{-1} s_1 s_4 \right\}_p \equiv \left( \sum_{j = 0}^{k_1 + k_4 - 1} y_j p^j \right) \cdot s_1 s_4 \text{ modulo } \Z$, so we have
		\begin{align*}
		\exp \left( 2 \pi i \left( \sum_{j = 0}^{k_1 + k_4 - 1} y_j p^j \right) s_1 s_4 \right) = \exp \left( 2\pi i \left\{ x^{-1} s_1 s_4 \right\}_p  \right).
		\end{align*}
		
		This shows that $\bar{\eta} \left( \lambda_{x, \theta}(s_1,s_2), \lambda_{x, \theta}(s_3, s_4) \right) = \Psi_{\beta}\left( \left(s_1, s_2\right), \left( s_3, s_4 \right) \right)$, for all $s_1$, $s_2$, $s_3$, and $s_4$ in $\Z\left[ 1/p\right]$, as desired. Therefore, $C^\ast \left( D_{x, \theta}^\perp, \bar{\eta} \right)$ is isomorphic to $\A_{\beta}^\S$. It follows that $\A_\alpha^\S$ is Morita equivalent to $\A_{\beta}^\S$.
	\end{proof}
	
	Thus, one sees that given any $\alpha$ in $\Omega_p$ satisfying $\alpha_0 \neq 0$ and $x_\alpha$ invertible in the $p$-adic integers, we can find a $\beta \in \Omega_p$ (satisfying the same conditions), such that $\A_\alpha^\S$ and $\A_\beta^\S$ are Morita equivalent.
	It is then not unexpected that if we started with $\beta$, the formula in this theorem recovers $\alpha$ exactly.
	
	We conclude this section with the following slightly more general result, in which we only assume the $p$-adic integer $x_\alpha$ associated to $\alpha$ is nonzero.
	\begin{theorem}\label{beta-formula-general}
		Let $p$ be prime and let $\alpha = (\alpha_j)_{j \in \N} \in \Omega_p$ such that $p\alpha_{j+1} = \alpha_j + x_j$ with $\alpha_0 \neq 0$ and $x_\alpha = \sum_{j=0}^{\infty} x_j p^j \neq 0$.
		Then $C^\ast(D_{x, \theta}, \eta)$ is isomorphic to the noncommutative solenoid $\A_\alpha^\S$, where $x = x_\alpha$ and $\theta = \alpha_0$.
		Since $x_\alpha \neq 0$, we write (uniquely) $x_\alpha^{-1} = \sum_{j = -v}^{\infty} y_j p^j \in \Q_p$.
		Via the Heisenberg equivalence bimodule of Rieffel, $\A_\alpha^\S$ is Morita equivalent to a noncommutative solenoid $\A_\beta^\S$, where $\beta = \left( \beta_n \right)_{ n \in \N}$ is given by 
		\[
		\beta_n = \dfrac{1}{\theta p^n} + \dfrac{\sum_{j = -v}^{n-1} y_j p^j}{p^n}.
		\]
	\end{theorem}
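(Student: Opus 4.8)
The plan is to follow the proof of Theorem \ref{beta-formula-nonzero} essentially line by line, carrying along the single change that $v = \ord(x_\alpha)$ may now be a positive integer rather than $0$. First I would invoke \cite[Theorem 5.6]{LP2} to identify $\A_\alpha^\S$ with $C^\ast(D_{x,\theta}, \eta)$ for $x = x_\alpha$ and $\theta = \alpha_0$; since that argument uses only $\alpha_0 \neq 0$, it transfers without modification. The one point meriting a sentence is that $\iota_{x,\theta}$ is still a lattice embedding when $x_\alpha$ is not a $p$-adic unit: because $x_\alpha \neq 0$, multiplication by $x_\alpha$ is a topological automorphism of $\Q_p$, and it carries the diagonal copy of $\Z[1/p]$ inside $\Q_p \times \R$ onto the first-coordinate image of $\iota_{x,\theta}$, preserving both discreteness and cocompactness.

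Next I would record the annihilator $D_{x,\theta}^\perp$ from \cite[Lemma 5.5]{LP2}, which is written in terms of $x^{-1}$; this remains legitimate because $x_\alpha \neq 0$ forces $x^{-1} \in \Q_p$, now of order $-v$. After the same substitution $s_2 \mapsto -s_2$, computing the conjugated multiplier on $D_{x,\theta}^\perp$ produces
\[
\bar\eta\left( \lambda_{x,\theta}(s_1, s_2), \lambda_{x,\theta}(s_3, s_4) \right) = e^{2\pi i \frac{1}{\theta} s_1 s_4} \, e^{-2\pi i \{ -x^{-1} s_1 s_4 \}_p},
\]
just as before. The identity $\{y\}_p + \{-y\}_p \in \{0, 1\}$ holds for every $y \in \Q_p$ irrespective of its order, so $e^{-2\pi i \{ -x^{-1} s_1 s_4 \}_p} = e^{2\pi i \{ x^{-1} s_1 s_4 \}_p}$ persists in the general case.

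The crux is then matching this against $\Psi_\beta$. Here I would apply Lemma \ref{fractional-part-congruence} to $x^{-1} = \sum_{j=-v}^{\infty} y_j p^j$, whose order is $-v$; since that lemma was already stated for arbitrary (possibly negative) order, it yields
\[
\{ x^{-1} s_1 s_4 \}_p \equiv \left( \sum_{j=-v}^{k_1 + k_4 - 1} y_j p^j \right) s_1 s_4 \pmod{\Z},
\]
with $s_i = j_i / p^{k_i}$. This is exactly the summand that appears in $\beta_{k_1 + k_4}$, so expanding $\Psi_\beta$ with $\beta_n = \frac{1}{\theta p^n} + \frac{\sum_{j=-v}^{n-1} y_j p^j}{p^n}$ reproduces $\bar\eta$ term for term. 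By Theorem \ref{Xi_p_Psi_alpha} this identifies $C^\ast(D_{x,\theta}^\perp, \bar\eta)$ with $\A_\beta^\S$, and Rieffel's equivalence bimodule then delivers the Morita equivalence.

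The main obstacle is bookkeeping rather than a new idea: one must confirm that none of the cited results from \cite{LP2} tacitly used invertibility of $x_\alpha$ in $\Z_p$ as opposed to merely $x_\alpha \neq 0$. The lattice argument above settles this for the embedding itself, and because Lemma \ref{fractional-part-congruence} and the sign identity for fractional parts are already insensitive to the order of the $p$-adic number involved, the lower limit $j = -v$ propagates cleanly through every step to the stated formula for $\beta_n$.
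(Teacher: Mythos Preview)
Your proposal is correct and matches the paper's own approach: the paper's proof is a one-line remark that the argument of Theorem~\ref{beta-formula-nonzero} goes through verbatim once Lemma~\ref{fractional-part-congruence} (and Lemma~\ref{inverse-multiply-one}) are invoked in their full generality, and that is precisely the line-by-line transfer you describe, with the lower summation index shifted to $-v$.
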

	
	\begin{proof}
		Using Lemma \ref{fractional-part-congruence} and Lemma \ref{inverse-multiply-one} to their full generality, the proof follows the same procedure presented in the proof of Theorem \ref{beta-formula-nonzero}.
	\end{proof}
	

	\section{Directed systems of equivalence bimodules}\label{S-directed systems}
	
	\subsection{Directed system of equivalence bimodules.} In \cite{LP3}, a notion of \emph{directed system of equivalence bimodules} was introduced. We recall it here.
	
	\begin{definition}\label{directed-system}[\cite{LP3}, Definition 3.1]
		Let 
		\begin{center}
			\begin{tikzcd}
				A_0 \arrow[r, "\varphi_0"] & A_1 \arrow[r, "\varphi_1"] & A_2 \arrow[r, "\varphi_2"] & \cdots
			\end{tikzcd}
		\end{center}
	    and 
	    \begin{center}
	    	\begin{tikzcd}
	    		B_0 \arrow[r, "\psi_0"] & B_1 \arrow[r, "\psi_1"] & B_2 \arrow[r, "\psi_2"] & \cdots
	    	\end{tikzcd}
	    \end{center}
        be two directed systems of unital C*-algebras, whose *-morphisms are all unital maps. A sequence $(X_n, i_n)_{n \in \N}$ is a \emph{directed system of equivalence bimodule adapted to the sequence $(A_n)_{n \in \N}$ and $(B_n)_{n \in \N}$} when $X_n$ is an $A_n$-$B_n$-equivalence bimodule, whose $A_n$- and $B_n$-valued inner products are denoted respectively by $\inner{\cdot}{\cdot}_{A_n}$ and $\inner{\cdot}{\cdot}_{B_n}$, for all $n \in \N$, and such that the sequence
        \begin{center}
        	\begin{tikzcd}
        	X_0 \arrow[r, "i_0"] & X_1 \arrow[r, "i_1"] & X_2 \arrow[r, "i_2"] & \cdots
        	\end{tikzcd}
        \end{center}
        is a directed sequence of modules satisfying
        \begin{equation}\label{preserve-right-inner-product}
        	\inner{i_n(f)}{i_n(g)}_{B_{n+1}} = \psi_n\left( \inner{f}{g}_{B_n} \right), \quad \text{for all } f, g \in X_n,
        \end{equation}
        and
        \begin{equation}\label{preserve-right-action}
        	i_n(f \cdot b) = i_n(f) \cdot \psi_n(b), \quad \text{for all } f \in X_n, b \in B_n,
        \end{equation}
        with analogous but symmetric equalities holding for the $X_n$ viewed as left Hilbert $A_n$-modules:
        \begin{equation}\label{preserve-left-inner-product}
        	\inner{i_n(f)}{i_n(g)}_{A_{n+1}} = \varphi_n\left( \inner{f}{g}_{A_n} \right), \quad \text{for all } f, g \in X_n,
        \end{equation}
        and
        \begin{equation}\label{preserve-left-action}
        	i_n(a \cdot f) = \varphi_n(a) \cdot i_n(f) \quad \text{for all } f \in X_n, a \in A_n,
        \end{equation}
	\end{definition}
	
	As stated in \cite[Theorem 3.4]{LP3}, Definition \ref{directed-system} provides the necessary structure to construct an equivalence bimodule that implements the Morita equivalence of the direct limit C*-algebras.
	
	In this section, we present a construction for producing families of irrational noncommutative solenoids that are Morita equivalent to a given one.
	
	In short, first fix an irrational noncommutative solenoid $\A_\alpha^\S = \varinjlim A_{\alpha_{2n}}$ and a projection $P$ from an $m \times m$ matrix algebra over $A_{\alpha_0}$, both satisfying certain conditions.
	We can then form a noncommutative solenoid $\A_\beta^\S$ from the directed sequence of irrational rotation algebras $A_{\beta_{2n}} \cong P M_m\left( A_{\alpha_{2n}} \right)P$ with connecting maps given by Theorem \ref{direct-limit-realization}.
	We will show that $\varinjlim PA_{\alpha_{2n}}^m$, with the appropriate connecting maps, is a directed system of equivalence bimodules adapted to the sequences $\left( A_{\alpha_{2n}} \right)_{n \in \N}$ and $\left( P M_m\left(A_{\alpha_{2n}} \right)P \right)_{n \in \N}$.
	This allows us to state this section's main result as Theorem \ref{main-result}, which generalizes the results in Section 4 of \cite{LP3}.
	
	For the connecting maps between the approximating bimodules, it requires algebraically intensive steps to check that they satisfy Equations \ref{preserve-right-inner-product} through \ref{preserve-left-action}. Much of the current section is devoted to this.
	
	We begin by establishing the equivalence bimodules at each stage.
	
	\begin{notation}\label{nc-solenoid-notation}
		Let $\alpha = \left( \alpha_n \right)_{n \in \N} \in \Xi_p$ be an irrational sequence and $p \alpha_{n+1} = \alpha_n + x_n$ for all $n \in \N$.
		Let $\A_\alpha^\S = \varinjlim A_{\alpha_{2n}}$ be an irrational noncommutative solenoid realized as the direct limit of the sequence of irrational rotation algebras $\left( A_{\alpha_{2n}} \right)_{n \in \N}$.
		We denote the unique tracial state on $A_{\alpha_{2n}}$ by $\tau_{\alpha_{2n}}$.
		We use the same notation $\tau_{\alpha_{2n}}$ for the standard \emph{unnormalized} trace on $M_m\left( A_{\alpha_{2n}} \right)$, $m \geq 1$.
		Note that $\left( 1/m \right) \tau_{\alpha_{2n}}$ is then the unique tracial state on $M_m\left( A_{\alpha_{2n}} \right)$.
	\end{notation}
	
	Let $P$ be a nontrivial projection in $M_m\left( A_{\alpha_0} \right)$ with $\tau_{\alpha_0} \left( P \right) = c_0 \alpha_0 + d_0 \in (0, m)$ that satisfies the following condition:
	\begin{condition}\label{the-condition}
		$\gcd\left( c_0 p, d_0 - c_0x_0 \right) = 1$.
	\end{condition}
	We follow the convention that $\gcd\left( a, 0 \right) = a$ for any positive integer $a$.
	For details of the existence and construction of such a projection, see \cite{Rieffel81}.
	When $P$ is considered as a projection in $M_m\left( A_{\alpha_{2n}} \right)$ for any $n \in \N$, it is not hard to see that $\tau_{\alpha_{2n}} \left( P \right) =  \tau_{\alpha_0} \left( P \right) = c_0 \alpha_0 + d_0$, but $\alpha_0 = p^{2n}\alpha_{2n} - \sum_{j=0}^{2n-1} x_j p^j$, so the trace of $P$ there is 
	\[
	\tau_{\alpha_{2n}} \left( P \right) = \left( c_0 p^{2n} \right) \alpha_{2n} + \left( d_0 - c_0 \sum_{j = 0}^{2n - 1} x_j p^j \right).
	\]
	We let 
	\[
	c_{2n} = c_0 p^{2n} \quad \text{and} \quad d_{2n} = d_0 - c_0 \sum_{j = 0}^{2n - 1} x_j p^j,
	\]
	so that $\tau_{\alpha_{2n}} \left( P \right) = c_{2n} \alpha_{2n} + d_{2n}$ in $M_m\left( A_{\alpha_{2n}} \right)$.
	
	The follow lemma states Condition \ref{the-condition} in a form that will be useful to us later. 
	
	\begin{lemma}\label{condition-useful-form}
		If $\gcd\left( c_0 p, d_0 - c_0 x_0 \right) = 1$, then $\gcd\left( c_{2n}, d_{2n} \right) = 1$ for all $n \in \N$.
	\end{lemma}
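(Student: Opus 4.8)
The plan is to extract from the single hypothesis $\gcd(c_0 p,\, d_0 - c_0 x_0) = 1$ two separate coprimality facts, and then to treat the two kinds of prime divisors of $c_{2n} = c_0 p^{2n}$---namely $p$ itself and the prime divisors of $c_0$---independently. First I would observe that any common divisor of $c_0$ and $d_0 - c_0 x_0$ also divides $c_0 p$, so the hypothesis forces $\gcd(c_0,\, d_0 - c_0 x_0) = 1$; and since $d_0 - c_0 x_0 \equiv d_0 \pmod{c_0}$, this is the same as $\gcd(c_0, d_0) = 1$. Symmetrically, because $p \mid c_0 p$, the hypothesis forces $p \nmid (d_0 - c_0 x_0)$.

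Next I would rewrite $d_{2n} = d_0 - c_0 \sum_{j=0}^{2n-1} x_j p^j = (d_0 - c_0 x_0) - c_0 \sum_{j=1}^{2n-1} x_j p^j$ and reduce modulo $p$ and modulo $c_0$ in turn. Every term of $\sum_{j=1}^{2n-1} x_j p^j$ carries a factor of $p$, so $d_{2n} \equiv d_0 - c_0 x_0 \pmod p$; together with $p \nmid (d_0 - c_0 x_0)$ this gives $p \nmid d_{2n}$, hence $\gcd(p^{2n}, d_{2n}) = 1$. As a bonus, $p \nmid d_{2n}$ forces $d_{2n} \neq 0$, so the convention $\gcd(a,0) = a$ is never actually invoked. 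Reducing instead modulo $c_0$, the entire subtracted term is divisible by $c_0$, so $d_{2n} \equiv d_0 \pmod{c_0}$, and therefore $\gcd(c_0, d_{2n}) = \gcd(c_0, d_0) = 1$.

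Finally I would combine the two computations: since $d_{2n}$ is coprime to both $p^{2n}$ and $c_0$, no prime can divide both $c_{2n} = c_0 p^{2n}$ and $d_{2n}$, so $\gcd(c_{2n}, d_{2n}) = 1$, as desired. The argument is entirely elementary number theory, and I do not expect a genuine obstacle; the only step requiring care is keeping the two reductions (mod $p$ and mod $c_0$) cleanly separated, and recognizing at the outset that the compound hypothesis $\gcd(c_0 p,\, d_0 - c_0 x_0) = 1$ is precisely what simultaneously controls the prime $p$ introduced by the powers $p^{2n}$ and the fixed prime factors of $c_0$.
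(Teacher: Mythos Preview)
Your proof is correct and follows essentially the same approach as the paper: both arguments separate the prime divisors of $c_{2n} = c_0 p^{2n}$ into $p$ and the primes of $c_0$, and use the congruences $d_{2n} \equiv d_0 - c_0 x_0 \pmod{p}$ and $d_{2n} \equiv d_0 \pmod{c_0}$ to rule each out. The only cosmetic difference is that you argue directly while the paper phrases the case analysis as a contrapositive.
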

	
	\begin{proof}
		First, it is clear that $\gcd\left( c_0 p, d_0 + c_0 x_0 \right) = 1$ implies $\gcd(c_0, d_0) = 1$, as any common divisor of $c_0$ and $d_0$ divides both $c_0p$ and $d_0 - c_0x_0$.
		
		Fix $n \geq 1$. We show that if $\gcd\left( c_0p^{2n}, d_0 - c_0 \sum_{j=0}^{2n-1} x_j p^j\right) > 1$, then $\gcd\left( c_0p, d_0 - c_0x_0 \right) > 1$.
		It then follows that $\gcd(c_0p, d_0-c_0x_0) = 1$ implies $\gcd(c_0p^{2n}, d_0 - c_0 \sum_{j=0}^{2n-1} x_j p^j  ) = 1$ for all $n \geq 1$.
		
		Assume $\gcd\left( c_{2n}, d_{2n} \right) = \gcd\left( c_0p^{2n}, d_0 - c_0 \sum_{j=0}^{2n-1} x_j p^j \right) > 1$, then either $p$ divides $d_0 - c_0 \sum_{j=0}^{2n-1} x_j p^j$, or $q$ divides $d_0 - c_0 \sum_{j=0}^{2n-1} x_j p^j$ for some divisor $q$ of $c_0$ that is strictly greater than $1$.
		In the former case, for some integer $k$, 
		\begin{align*}
			kp = d_0 - c_0 \sum_{j=0}^{2n-1} x_j p^j = \left(d_0 - c_0 x_0 \right) - c_0 \sum_{j=1}^{2n-1} x_j p^j,
		\end{align*}
		so $d_0 - c_0 b_0 = kp + c_0\sum_{j=1}^{2n-1} x_j p^j = p\left( k + c_0\sum_{j = 1}^{2n-1} x_j p^{j-1} \right)$.
		This makes $p$ a common divisor of $c_0p$ and $d_0-c_0x_0$. In the latter case, $d_0 - c_0x_0 = \left( d_0 - c_0 \sum_{j=0}^{2n-1} x_j p^j \right) + c_0\sum_{j=1}^{2n-1} x_j p^j$, making $q$ a common divisor of $d_0-c_0 x_0$ and $c_0p$.
		This shows that $\gcd(c_0p, d_0-c_0x_0) = 1$ implies $\gcd(c_0p^{2n}, d_0 - c_0 \sum_{j=0}^{2n-1} x_j p^j  ) = 1$ for all $n \geq 1$.
	\end{proof}
	
	\begin{remark}\label{condition-special-case}
		We single out a special occurrence of how Condition \ref{the-condition} can be satisfied: if $d_0 = 0$, then Condition \ref{the-condition} is satisfied if and only if both $c_0 = 1$ and $x_0 \neq 0$. This will be exactly the situation described in Proposition \ref{relate-constructions}.
	\end{remark}

	\subsection{Review of Rieffel's standard bimodule between irrational rotation algebras}
	
	\begin{notation}\label{notation-a-b-c-d}
		Throughout this section, we fix an irrational number $\alpha$ and let $c$ and $d$ be a pair of integers that generate $\Z$ with $c\alpha + d \neq 0$ and $c \neq 0$.
		Set $\gamma = 1/\left( c\alpha + d \right)$.
		Moreover, we let $a$ and $b$ be any two integers such that $ad - bc = 1$, and let $\beta = \left( a \alpha + b \right)\gamma = \left( a\alpha + b \right)/\left( c\alpha + d \right)$.
	\end{notation}
	
	For any $k \geq 1$, let $V\left( d, c; k \right)$ be the \textit{standard $M_k\left( A_\beta \right)$-$A_\alpha$-equivalence bimodule} defined by Rieffel in \cite{Rieffel83}. We start with a lemma that provides an alternative presentation of this bimodule.
	
	\begin{lemma}\cite[Lemma 6]{Kodaka92}\label{iso-right-module}
		With notation as above, let $P$ be a projection in $M_m\left( A_\alpha \right)$, $m \geq 1$, with unnormalized trace $k\left( c\alpha + d \right)$, then $V\left( d, c; k \right)$ is isomorphic to $PA_\alpha^m$ as a right $A_\alpha$-module.
	\end{lemma}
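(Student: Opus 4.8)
The plan is to reduce the statement to the classification of finitely generated projective modules over an irrational rotation algebra by their trace. By Rieffel's work \cite{Rieffel83}, $A_\alpha$ has the cancellation property, and the unique tracial state $\tau_\alpha$, extended to matrix algebras and hence to $K_0(A_\alpha)$, is injective with range $\Z + \Z\alpha$. Consequently, two finitely generated projective right $A_\alpha$-modules are isomorphic if and only if they have the same trace, and it suffices to show that $V(d,c;k)$ and $PA_\alpha^m$ have equal right $A_\alpha$-traces.

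The module $PA_\alpha^m$ is immediate: as the image of the idempotent $P \in M_m(A_\alpha)$ acting on the free module $A_\alpha^m$, it is finitely generated and projective, and its trace is by definition the unnormalized trace $\tau_\alpha(P) = k(c\alpha + d)$ assumed in the hypothesis. It therefore remains to compute the trace of $V(d,c;k)$ viewed as a right $A_\alpha$-module.

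This last computation is the crux. I would unwind Rieffel's explicit construction of the standard bimodule associated to the matrix $\left( \begin{smallmatrix} a & b \\ c & d \end{smallmatrix} \right)$ with $\beta = (a\alpha+b)/(c\alpha+d)$: the module is completed from a Schwartz space carrying the $A_\alpha$-action and the $A_\alpha$-valued inner product given by Rieffel's formulas, and its right $A_\alpha$-dimension is read off from the coupling between the $M_k(A_\beta)$- and $A_\alpha$-valued inner products. Tracking the normalizations in \cite{Rieffel83} yields that the right $A_\alpha$-trace of $V(d,c;k)$ equals $k(c\alpha + d)$. The difficulty here is bookkeeping rather than conceptual: one must follow the precise normalization constants for the transformation $\beta = (a\alpha+b)/(c\alpha+d)$, and it is exactly these constants that force the factor $c\alpha + d$.

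With the two traces matched, the conclusion follows from the classification recalled above. Both objects are finitely generated projective right $A_\alpha$-modules with the identical trace $k(c\alpha+d)$; since $k \geq 1$ and the trace of a projection is non-negative, this common value lies in the positive cone, where the trace is a complete invariant. Hence $V(d,c;k) \cong PA_\alpha^m$ as right $A_\alpha$-modules, as claimed.
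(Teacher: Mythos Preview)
Your approach is correct and is the standard one. Note, however, that the paper does not actually prove this lemma: it is stated with a citation to \cite[Lemma~6]{Kodaka92} and no proof is given in the paper itself. The argument you outline --- invoking Rieffel's cancellation theorem for $A_\alpha$ so that finitely generated projective right $A_\alpha$-modules are classified by their trace, then matching the trace $k(c\alpha+d)$ of $PA_\alpha^m$ against the right $A_\alpha$-dimension of $V(d,c;k)$ computed in \cite{Rieffel83} --- is exactly how Kodaka proves it, and is the expected route. The only point you leave implicit is that $V(d,c;k)$ is finitely generated and projective over $A_\alpha$; this follows because it is an $M_k(A_\beta)$--$A_\alpha$ equivalence bimodule between unital C*-algebras.
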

	
	Since $P$ is a nontrivial projection in the simple C*-algebra $M_m\left( A_\alpha \right)$, $P$ is full and $PA_\alpha^m$ is a $P M_m\left( A_\alpha \right)P$-$A_\alpha$-equivalence bimodule.
	It follows from this observation and the lemma above that $P M_m\left( A_\alpha \right) P \cong M_k\left( A_\beta \right)$ as C*-algebras.
	
	We now recall Rieffel's construction of the standard $M_k\left( A_\beta \right)$-$A_\alpha$-equivalence bimodule for the case $k = 1$, which is sufficient for our purposes.
	The rest of this section is based on \cite[Theorem 1.1]{Rieffel83} and its proof.
	It is an application of Phil Green's Symmetric Imprimitivity Theorem that is proved in \cite[Situation 10]{Rieffel82}, or see \cite{Rieffel76}.
	
	\begin{theorem}\cite[Theorem 1.1]{Rieffel83}\label{alpha-beta-bimodule-structure}
		With notation as above, let $G = \R \times \Z_c$ (If $c$ is a negative integer, we still define $\Z_c$ to be $\Z / c\Z$) and consider the following subgroups of $G$:
		\[
		 H  = \{ \left( n, [dn]_c \right) : n \in \Z \}, \qquad K = \left\{ \left( n \gamma, [n]_c \right) : n \in \Z \right\}.
		\]
		Let $H$ act on $K \backslash G$ (the right cosets of $K$) by right translation, and let $K$ act on $G / H$ (the left cosets of $H$) by left translation.
		Then the transformation group C*-algebras $C^\ast\left( H, K \backslash G \right)$ and $C^\ast\left( K,  G / H \right)$ are isomorphic to $A_\alpha$ and $A_\beta$, respectively. Furthermore, $C_c\left( G \right)$, suitably completed and structured, provides an $A_\beta$-$A_\alpha$-equivalence bimodule.
	\end{theorem}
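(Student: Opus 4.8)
The plan is to recognize this statement as a direct instance of Green's Symmetric Imprimitivity Theorem (the version established in \cite[Situation 10]{Rieffel82}, see also \cite{Rieffel76}) and then to identify the two resulting transformation group C*-algebras as rotation algebras by explicitly computing the relevant quotient spaces together with the rotation angles induced on them. First I would set up the two commuting actions on $G = \R \times \Z_c$: since $G$ is abelian, the action of $K$ by left translation and of $H$ by right translation automatically commute. Both $H$ and $K$ are isomorphic to $\Z$ (the first coordinate recovers the generating integer), and each is a discrete, hence closed, subgroup of $G$. Consequently both translation actions are free and proper: an element $(n, [dn]_c) \in H$ fixes a point only when $n = 0$, and likewise for $K$, while properness is automatic for translation by a closed subgroup. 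This verifies the hypotheses of the Symmetric Imprimitivity Theorem, which then yields that $C_0(K\backslash G) \rtimes H = C^\ast(H, K\backslash G)$ and $C_0(G/H)\rtimes K = C^\ast(K, G/H)$ are Morita equivalent, with $C_c(G)$ completing — with its canonical inner products and bimodule actions — to the implementing $C^\ast(K,G/H)$-$C^\ast(H,K\backslash G)$-imprimitivity bimodule.

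Next I would identify $C^\ast(H, K\backslash G)$ with $A_\alpha$. Tracking $K$-orbits, the generator $(\gamma, [1]_c)$ of $K$ runs through all $|c|$ components of $\R \times \Z_c$ (its $\Z_c$-coordinate $[1]_c$ always generates $\Z_c$) and returns to the starting component after shifting the $\R$-coordinate by $c\gamma$; hence $K\backslash G \cong \R / c\gamma\Z$, a circle. Representing a class by its component-$0$ coordinate $r - j\gamma$, the generator $(1, [d]_c)$ of $H$ acts by translation by $1 - d\gamma$. Reading this as a fraction of the circumference $c\gamma$ and substituting $\gamma = 1/(c\alpha + d)$ gives rotation angle $(1 - d\gamma)/(c\gamma) = 1/(c\gamma) - d/c = \alpha$, so $C^\ast(H, K\backslash G) \cong C(\T)\rtimes_\alpha \Z = A_\alpha$.

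Symmetrically, I would identify $C^\ast(K, G/H)$ with $A_\beta$. Here the generator $(1,[d]_c)$ of $H$ runs through every residue mod $c$ — this is where $\gcd(c,d)=1$ enters, since $[d]_c$ must generate $\Z_c$ — and closes up after shifting the $\R$-coordinate by $c$, so $G/H \cong \R/c\Z$. Choosing component-$0$ representatives, the generator $(\gamma,[1]_c)$ of $K$ translates by $\gamma - a$, where $a \equiv d^{-1} \bmod c$ comes from $ad - bc = 1$; the induced angle on the circle of circumference $c$ is $(\gamma - a)/c$, which the relation $ad-bc=1$ simplifies to $-(a\alpha+b)/(c\alpha+d) = -\beta$. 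Since $A_{-\beta} \cong A_\beta$, this gives $C^\ast(K, G/H) \cong A_\beta$, and combined with the imprimitivity bimodule above we obtain the asserted $A_\beta$-$A_\alpha$-equivalence bimodule.

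With the properness and freeness being routine and everything else following from the cited theorem, the main obstacle is the careful bookkeeping in the two quotient-space identifications: choosing consistent fundamental-domain representatives across the $|c|$ components of $\R \times \Z_c$, correctly reading off the translation induced by each generator on the resulting circle, and verifying through $\gamma = 1/(c\alpha+d)$ and $ad-bc=1$ that the two angles collapse exactly to $\alpha$ and (up to sign) $\beta$. One must also keep track of the convention $\Z_c = \Z/c\Z$ even when $c<0$, which only affects signs and component labeling and is absorbed by the isomorphism $A_\theta \cong A_{-\theta}$.
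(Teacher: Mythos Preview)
Your proposal is correct and follows essentially the same route as the paper (which in turn is recounting Rieffel's argument): invoke Green's Symmetric Imprimitivity Theorem for the commuting $H$- and $K$-actions on $G$, then identify each quotient with a circle and read off the rotation angle. The only cosmetic differences are that the paper writes down explicit surjective group homomorphisms $G \to \T$ with kernels $K$ and $H$ (namely $(t,[m]_c) \mapsto (t/\gamma - m)/c$ and $(t,[m]_c) \mapsto (t - am)/c$) rather than tracking orbits through fundamental domains, and that the paper deliberately chooses the inverse identification of $G/H$ with $\T$ so as to land on $\beta$ rather than $-\beta$---your appeal to $A_{-\beta} \cong A_\beta$ is of course fine, but the paper's convention is chosen precisely so that the generators satisfy $U_\beta V_\beta = e^{2\pi i \beta} V_\beta U_\beta$ in the form needed for the later bimodule computations.
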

	
	Both isomorphisms rely on identifying the quotient group with the circle group $\T$ and realizing the corresponding group action of $\Z$ (both $H$ and $K$ are isomorphic to $\Z$) gives rise to a rotation algebra.
	In particular, we employ the following identification of $K \backslash G$ and $G / H$ with $\T$.
	The map given by
	\[
	\begin{cases}
		G &\rightarrow \T\\
		(t, [m]_c) &\mapsto \dfrac{t/\gamma - m}{c},
	\end{cases}
	\]
	is a group homomorphism with kernel exactly $K$. We can then identify $K \setminus G$ with $\T$.
	Similarly, the map given by
	\[
	\begin{cases}
		G &\rightarrow \T\\
		(t, [m]_c) &\mapsto \dfrac{t - am}{c},
	\end{cases}
	\]
	is a group homomorphism with kernel exactly $H$, so we can identify $G/H$ with $\T$.\\
	
	\begin{remark}
		We use an inverse identification of $G/H$ with $\T$ than what is presented in \cite{Rieffel83}.
		Over there, the identification is derived from the map $G \rightarrow \T$ given by $(t, [m]_c) \mapsto (am - t)/c$.
		One of reasons for choosing this inverse identification is to later have $U_\beta V_\beta = e^{2\pi i \beta} V_\beta U_\beta$, instead of $e^{2\pi i \beta} U_\beta V_\beta = V_\beta U_\beta$, where $U_\beta$ and $V_\beta$ are the usual generating unitaries of $A_\beta$.
	\end{remark}
	
	In the following two lemmas, we record the formulas for the generators of $A_\beta$, the left $A_\beta$-action on $C_c\left( \R \times \Z_c \right)$, and the $A_\beta$-valued inner product, all under the present identification of $G / H \times K$ with $\T \times \Z$.
	To ease notation, we just write $n$ to represent $\left( n\gamma, [n]_c \right) \in K$.
	These formulas are derived originally in \cite{Rieffel76} and applied to (matrix algebras over) rotation algebras in \cite{Rieffel83}.
	In the proof of \cite[Proposition 4.2]{LP3}, they are given for the case where $a = 1, b = 0, c = p^{2j}$, and $d = 1$, for any $j \in \N$. 
	
	For $(t, [m]_c) \in \R \times \Z_c$, we denote by $\tilde{(t, [m]_c)}$ the class of $(t, [m]_c)$ in $G/H$.
	In the second lemma, the left $A_\beta$-action is define on the dense subspace $C_c\left( G / H \times K \right)$ of $C^\ast \left( K, G/H \right) \cong A_\beta$, and $(r, k) \in \R \times \Z$ is identified with $G/H \times K$ via the map $(e^{2\pi i r}, k) \mapsto \left( \tilde{(cr, [0]_c)}, (k\gamma, [k]_c) \right) \in G/H \times K$.
	
	\begin{lemma}
		The generators $U_\beta$ and $V_\beta$ of $A_\beta$ are given by 
		\[
		U_\beta( \tilde{\left( t, [m]_c \right)} , n) = 
		\begin{cases}
			0, &\text{ if } n \neq 1\\
			1, &\text{ if } n  = 1
		\end{cases}
		\]
		and 
		\[
		V_\beta( \tilde{\left( t, [m]_c \right)} , n) = 
		\begin{cases}
			0, &\text{ if } n \neq 0\\
			e^{2 \pi i \left[ (t - am) / c\right]}, &\text{ if } n  = 0.
		\end{cases}
		\]
	\end{lemma}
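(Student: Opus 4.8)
The plan is to read off both generators directly from the crossed–product description $A_\beta \cong C^\ast(K, G/H)$ furnished by Theorem~\ref{alpha-beta-bimodule-structure}, and then to confirm the formulas by verifying the defining rotation relation. Under the chosen identification of $G/H$ with $\T$ via $\tilde{(t,[m]_c)} \mapsto (t-am)/c \bmod \Z$, the algebra $C^\ast(K, G/H)$ is a copy of $C(\T) \rtimes \Z$ with $K \cong \Z$ the acting group, and it has two canonical generators: the unitary implementing the $K$–action, which as a function on $G/H \times K$ is supported at the generator $n = 1$ of $K$ and equals $1$ there, and the identity character of $\T \cong G/H$ sitting inside $C(G/H)$ at $n = 0$. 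The first is exactly the stated $U_\beta$. For the second, the identity character evaluated at $\tilde{(t,[m]_c)}$ is $e^{2\pi i (t-am)/c}$, which is precisely the stated $V_\beta$. Thus, once the identification is fixed, the formulas are essentially definitional; the substantive point is to confirm that these two unitaries are the generators of $A_\beta$ with the correct rotation parameter.

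First I would fix the convolution and involution conventions on $C_c(K, C(G/H))$, as in \cite[Chapter 2]{Williams07}, writing elements as functions $f(\tilde{(t,[m]_c)}, n)$ on $G/H \times K$. Next I would compute the $K$–action on $G/H$ in the $\T$–coordinate $r = (t-am)/c$: the generator $(\gamma, [1]_c) \in K$ translates a coset by $\gamma$ in the first factor and by $1$ in the second, so in the $\T$–coordinate it sends $r$ to $r + (\gamma - a)/c$. Using $\gamma = 1/(c\alpha+d)$ together with the relation $ad - bc = 1$, a short computation gives
\[
\frac{\gamma - a}{c} = \frac{1 - a(c\alpha+d)}{c(c\alpha+d)} = \frac{-(a\alpha + b)}{c\alpha + d} = -\beta,
\]
where the middle equality substitutes $ad = 1 + bc$; hence the generator of $K$ acts on $\T$ as rotation by $-\beta$. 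Finally, I would substitute the two formulas into the convolution and compute $U_\beta V_\beta$ and $V_\beta U_\beta$; the shift by $\beta$ picked up when the identity character is precomposed with this rotation yields $U_\beta V_\beta = e^{2\pi i \beta} V_\beta U_\beta$. Since $C(\T)$ is generated by the identity character and $U_\beta$ implements the shift, these two unitaries generate $C^\ast(K, G/H)$, and by the universal property of $A_\beta$ they are its standard generators.

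The main obstacle here is bookkeeping rather than depth: one must simultaneously track the \emph{inverse} identification of $G/H$ with $\T$ adopted in this paper (which differs from the one in \cite{Rieffel83}), the direction of the $K$–action, and the chosen convolution convention, so that the commutation relation emerges as $U_\beta V_\beta = e^{2\pi i \beta} V_\beta U_\beta$ rather than its mirror image, consistent with the preceding remark. The algebraic identity $(\gamma - a)/c = -\beta$, which rests entirely on $ad - bc = 1$, is the key link between the geometry of the coset action and the parameter $\beta$; getting its sign right is exactly what pins down the correct rotation algebra.
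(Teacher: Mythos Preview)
Your proposal is correct and follows essentially the same route as the paper. The paper does not give a formal proof of the lemma; it simply states the formulas (as the standard crossed-product generators under the chosen identification of $G/H$ with $\T$) and then verifies, immediately afterward, that $U_\beta V_\beta = e^{2\pi i (a-\gamma)/c} V_\beta U_\beta$ with $(a-\gamma)/c = \beta$ via $ad-bc=1$---exactly your key identity $(\gamma-a)/c=-\beta$ with the opposite sign convention. Your write-up adds a bit more narrative (explicitly naming $U_\beta$ as the implementing unitary and $V_\beta$ as the identity character pulled back through the coset map), but the substantive verification is identical.
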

	We compute that 
	$$
	U_\beta V_\beta \left(  \tilde{\left( t, [m]_c \right)} , n \right) = 
	\begin{cases}
		0, &\text{ if } n \neq 1,\\
		e^{2 \pi i (t-\gamma - am + a) / c}, &\text{ if } n  = 1. 
	\end{cases}
	$$
	and
	$$
	V_\beta U_\beta \left(  \tilde{\left( t, [m]_c \right)} , n \right) = 
	\begin{cases}
		0, &\text{ if } n \neq 1,\\
		e^{2 \pi i (t - am) / c}, &\text{ if } n  = 1. 
	\end{cases}
	$$
	It follows that $U_\beta V_\beta = e^{2 \pi i (a - \gamma)/c} V_\beta U_\beta$. After writing $1$ as $ad - bc$, a few steps of algebra will show that $(a - \gamma)/c = \beta$, so we have $U_\beta V_\beta = e^{2\pi i \beta} V_\beta U_\beta$.
	
	\begin{lemma}
		For any $F_1, F_2 \in C_c\left( \R \times \Z_c \right)$, $f \in C_c\left( G/H \times K \right)$, $(r, k) \in \left( \R, \Z \right)$ and $(t, [m]_c) \in \R \times \Z_c$,
		\begin{equation}\label{left-inner-product}
			\inner{F_1}{F_2}_{A_\beta} (r, k) = \sum_{m \in \Z} F_1\left( cr + m, [dm]_c \right) \bar{F_2\left( cr + m - k\gamma, [dm-k]_c \right)},
		\end{equation}
		and
		\begin{equation}\label{left-action}
			\left(f \cdot F \right) \left( t, [m]_c \right) = \sum_{n \in \Z} f \left( \tilde{(t, [m]_c)}, (n\gamma, [n]_c) \right) F\left( t - n\gamma, [m-n]_c \right).
		\end{equation}
		In particular, it follows that the generators $U_\beta$ and $V_\beta$ act on $C_c(G)$ in the following way:
		\begin{equation}\label{left-Ubeta-action}
			\left( U_\beta \cdot F \right)(t, [m]_c) = F \left( t - \gamma, [m-1]_c \right),
		\end{equation}
		and
		\begin{equation}\label{left-Vbeta-action}
			\left( V_\beta \cdot F \right)(t, [m]_c) = e^{2 \pi i \left( t - am \right)/c} F(t, [m]_c).
		\end{equation}
	\end{lemma}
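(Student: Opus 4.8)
The plan is to derive \eqref{left-inner-product} and \eqref{left-action} by specializing the general formulas of Green's Symmetric Imprimitivity Theorem (in Rieffel's formulation, \cite{Rieffel76, Rieffel82}) to the concrete group $G = \R \times \Z_c$ of Theorem~\ref{alpha-beta-bimodule-structure}, and then to read off \eqref{left-Ubeta-action} and \eqref{left-Vbeta-action} as immediate special cases.

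First I would recall the abstract formulas. In the symmetric imprimitivity setting, with $K$ acting on $G$ by left translation and $H$ by right translation, the completion of $C_c(G)$ is a $C^\ast(K, G/H)$-$C^\ast(H, K\backslash G)$-equivalence bimodule on which the left action of $f \in C_c(G/H \times K)$ and the left, $C^\ast(K, G/H)$-valued inner product are
\[
(f \cdot F)(s) = \int_K f(\dot{s}, \kappa)\, F(\kappa^{-1}s)\, d\kappa, \qquad \inner{F_1}{F_2}_{A_\beta}(\dot{s}, \kappa) = \int_H F_1(sh)\, \bar{F_2(\kappa^{-1}sh)}\, dh,
\]
where $\dot{s}$ denotes the image of $s$ in $G/H$. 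Since both $H \cong \Z$ and $K \cong \Z$ are discrete, their Haar measures are counting measures and the two integrals collapse to sums over $\Z$; because $G$, $H$, and $K$ are unimodular, no modular-function factors intervene.

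Next I would substitute the explicit parametrizations. Writing $\kappa = (n\gamma, [n]_c)$ for the typical element of $K$ (abbreviated $n$ in the text) and computing in $G = \R \times \Z_c$, one has $\kappa^{-1}s = (t - n\gamma, [m - n]_c)$ for $s = (t, [m]_c)$; substituting into the left-action formula gives \eqref{left-action} directly. For the inner product I would use the identification $(e^{2\pi i r}, k) \mapsto (\tilde{(cr, [0]_c)}, (k\gamma, [k]_c))$ of $\R \times \Z$ with $G/H \times K$, take the coset representative $s = (cr, [0]_c)$, and run $h$ over $H = \{(m, [dm]_c) : m \in \Z\}$; then $sh = (cr + m, [dm]_c)$ and $\kappa^{-1}sh = (cr + m - k\gamma, [dm - k]_c)$, and summing over $m$ yields \eqref{left-inner-product}.

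Finally, \eqref{left-Ubeta-action} and \eqref{left-Vbeta-action} follow by inserting the explicit generators recorded in the preceding lemma into \eqref{left-action}: since $U_\beta$ is supported on the single group element $n = 1$ of $K$ and $V_\beta$ on $n = 0$, every term of the sum but one vanishes, leaving $F(t - \gamma, [m-1]_c)$ and $e^{2\pi i(t - am)/c}F(t, [m]_c)$, respectively. I expect the only real obstacle to be bookkeeping: one must use the \emph{inverse} identification of $G/H$ with $\T$ fixed in the remark above (rather than the one in \cite{Rieffel83}), keep the left-translation convention for $K$ and the right-translation convention for $H$ consistent throughout, and verify the normalization of Haar measure so that passing from integrals to sums introduces no spurious constants. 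Once these conventions are fixed, each identity is a direct substitution.
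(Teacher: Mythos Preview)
Your approach is correct and is exactly the route the paper itself points to: the paper does not supply an independent proof of this lemma but simply records the formulas, remarking beforehand that they ``are derived originally in \cite{Rieffel76} and applied to (matrix algebras over) rotation algebras in \cite{Rieffel83},'' with the special case $a=1$, $b=0$, $c=p^{2j}$, $d=1$ written out in \cite[Proposition~4.2]{LP3}. Your plan of specializing the abstract symmetric-imprimitivity formulas to $G=\R\times\Z_c$, using counting measure on the discrete subgroups $H$ and $K$ and the coset representative $s=(cr,[0]_c)$, is precisely the computation those citations encode, and your bookkeeping (including the inverse identification of $G/H$ with $\T$) matches the conventions fixed in the paper.
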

	
	The right $A_\alpha$-module structure is given by the following lemmas.
	
	\begin{lemma}
		The generators $U_\alpha$ and $V_\alpha$ of $A_\alpha$ are given by 
		\[
		U_\alpha( \tilde{\left( t, [m]_c \right)} , n) = 
		\begin{cases}
			0, &\text{ if } n \neq 1\\
			1, &\text{ if } n  = 1
		\end{cases}
		\]
		and 
		\[
		V_\alpha( \tilde{\left( t, [m]_c \right)} , n) = 
		\begin{cases}
			0, &\text{ if } n \neq 0\\
			e^{2 \pi i \left[ (t/\gamma - m) / c\right]}, &\text{ if } n  = 0.
		\end{cases}
		\]
	\end{lemma}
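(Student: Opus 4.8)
The plan is to mirror the derivation of the formulas for $U_\beta$ and $V_\beta$ in the preceding lemma, simply exchanging the roles of $H$ and $K$ and of $G/H$ and $K \backslash G$. By Theorem \ref{alpha-beta-bimodule-structure} we have $A_\alpha \cong C^\ast(H, K\backslash G)$, the transformation group C*-algebra for the right-translation action of $H \cong \Z$ on $K \backslash G$. Since $G = \R \times \Z_c$ is abelian, left and right cosets coincide and this action descends well to the quotient, so $C^\ast(H, K\backslash G) \cong C(K\backslash G) \rtimes H$. Following the crossed product conventions in \cite{Williams07}, exactly as was done for $A_\beta$, its canonical generators are the unitary $U_\alpha$ implementing the generator of $H$ and the unitary $V_\alpha$ arising from the identity character $\iota_\T$ on $K\backslash G \cong \T$.

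First I would fix the identification of $K\backslash G$ with $\T$ given in the text, namely the homomorphism $(t, [m]_c) \mapsto (t/\gamma - m)/c$ whose kernel is exactly $K$. Pulling $\iota_\T$ back along this map yields the function $e^{2\pi i (t/\gamma - m)/c}$ on $G$, which is constant on $K$-cosets; placing it at the identity $n = 0$ of $H$ and setting it to zero elsewhere gives precisely the stated formula for $V_\alpha$. Likewise, the canonical unitary implementing the generator of $H$ is supported at $n = 1$, where it equals $1$, and is independent of the $K\backslash G$-coordinate, which is the stated formula for $U_\alpha$.

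The one computation that genuinely requires care is confirming that the generator of $H$ acts on $K\backslash G \cong \T$ as rotation by $\alpha$, so that the crossed product is indeed $A_\alpha$ and the relation $U_\alpha V_\alpha = e^{2\pi i \alpha} V_\alpha U_\alpha$ holds. Right translation by the generator $(1, [d]_c) \in H$ sends $(t, [m]_c)$ to $(t+1, [m+d]_c)$, which under the identification becomes $(t/\gamma - m)/c + (1/\gamma - d)/c$; since $\gamma = 1/(c\alpha + d)$, the induced shift equals $(c\alpha + d - d)/c = \alpha$, pinning the rotation angle to $\alpha$. Alternatively, one can verify the relation directly by computing the convolution products $U_\alpha V_\alpha$ and $V_\alpha U_\alpha$ as functions on $K\backslash G \times H$, exactly as was done for $U_\beta$ and $V_\beta$. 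The expected obstacle is purely bookkeeping: keeping the two circle identifications straight, namely the $t/\gamma$-identification for $K\backslash G$ versus the $am$-identification for $G/H$, and tracking the factor $\gamma$ that distinguishes $H$-translations from $K$-translations.
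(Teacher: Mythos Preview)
Your proposal is correct and follows exactly the approach the paper takes: the paper states this lemma without proof, having already indicated that these formulas are derived in \cite{Rieffel76} and \cite{Rieffel83}, and your argument mirrors the verification the paper carries out for the $U_\beta$, $V_\beta$ lemma (identifying the generators via the crossed-product picture and checking the commutation relation through the circle identification). Your direct computation of the rotation angle, $(1/\gamma - d)/c = \alpha$, is the $\alpha$-side analogue of the paper's computation $(a - \gamma)/c = \beta$ and is entirely in the same spirit.
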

	
	\begin{lemma}
		For any $F_1, F_2 \in C_c\left( \R \times \Z_c \right)$, $g \in C_c\left( H \times K \backslash G \right)$, $(r, k) \in \left( \R, \Z \right)$ and $(t, [m]_c) \in \R \times \Z_c$,
		\begin{equation}\label{right-inner-product}
			\inner{F_1}{F_2}_{A_\alpha} (r, k) = \sum_{m \in \Z} \bar{ f \left( \left( c r - m \right)\gamma, [-m]_c \right) } g \left( \left( cr - m \right)\gamma + k, [dk - m]_c \right),
		\end{equation}
		and
		\begin{equation}\label{right-action}
			\left(F \cdot g \right) \left( t, [m]_c \right) = \sum_{n \in \Z} F \left( t-n, [m-dn]_c \right) g\left( \widetilde{(t - n, [m - dn]_c)} , \left( n, [dn]_c \right) \right).
		\end{equation}
		In particular, it follows that the generators $U_\alpha$ and $V_\alpha$ act on $C_c(G)$ in the following way:
		\begin{equation}\label{right-Ualpha-action}
			\left( F \cdot U_\alpha\right) (t, [m]_c) = F \left( t - 1, [m-d]_c \right),
		\end{equation}
		and
		\begin{equation}\label{right-Valpha-action}
			\left( F \cdot V_\alpha \right)(t, [m]_c) = e^{2 \pi i \left( t/\gamma - m \right)/c} F(t, [m]_c).
		\end{equation}
	\end{lemma}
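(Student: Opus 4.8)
The plan is to obtain both displayed formulas by specializing the general description of the right module structure furnished by Green's Symmetric Imprimitivity Theorem, exactly as the left $A_\beta$-structure was obtained. Recall that in the symmetric situation of \cite{Rieffel76, Rieffel82}, with $K$ and $H$ acting freely and properly on $G$ by left and right translation, $C_c(G)$ carries a $C^\ast(K, G/H)$-$C^\ast(H, K\backslash G)$-bimodule structure whose right action and right-hand inner product are given by integrals over $H$, twisted by the modular functions of $G$, $H$, and $K$. Since here $G = \R \times \Z_c$ is unimodular and $H \cong \Z \cong K$ are discrete, every modular factor is trivial and each integral over $H$ collapses to a sum over $\Z$.

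First I would fix the parametrization $n \mapsto (n, [dn]_c)$ of $H$ and the identification of $K\backslash G$ with $\T$ via $(t, [m]_c) \mapsto (t/\gamma - m)/c$, under which $C^\ast(H, K\backslash G) \cong C(\T)\rtimes_\alpha \Z = A_\alpha$, so that an element $g$ of $A_\alpha$ is realized as a function on $(K\backslash G)\times H$. Substituting the parametrization of $H$ into the general right-action formula, the contribution indexed by $(n, [dn]_c) \in H$ translates $(t, [m]_c)$ to $(t - n, [m - dn]_c)$, which produces \eqref{right-action}; the same substitution in the general inner-product formula yields \eqref{right-inner-product}. This mirrors the derivation of \eqref{left-action}, with the roles of $H$ and $K$ interchanged.

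To obtain the generator formulas, I would substitute the explicit expressions for $U_\alpha$ and $V_\alpha$ from the preceding lemma into \eqref{right-action}. Since $U_\alpha$ is supported on the $H$-coordinate $n = 1$ and equals $1$ there, the sum collapses to the single term $n = 1$ and gives the pure shift \eqref{right-Ualpha-action}; since $V_\alpha$ is supported on $n = 0$ with value $e^{2\pi i (t/\gamma - m)/c}$, the sum collapses to $n = 0$ and produces the multiplication operator \eqref{right-Valpha-action}. As a consistency check I would verify directly from these two formulas that $U_\alpha V_\alpha = e^{2\pi i \alpha} V_\alpha U_\alpha$, which both confirms the identification $C^\ast(H, K\backslash G) \cong A_\alpha$ and validates the chosen conventions.

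The hard part will be the bookkeeping needed to transport the general imprimitivity-theorem formulas through the two simultaneous identifications---the identification of $K\backslash G$ with $\T$ and the $\Z$-parametrization of $H$---while correctly tracking the $\Z_c$-component arithmetic (the terms $[m - dn]_c$ and $[dk - m]_c$) and the direction of translation, so that all signs and the character $e^{2\pi i (t/\gamma - m)/c}$ emerge precisely. Once these conventions are pinned down to agree with the already-established left-handed formulas, the computations are routine, and the two generator specializations serve as the decisive sanity checks.
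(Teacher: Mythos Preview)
Your proposal is correct and matches the paper's approach: the paper does not give a standalone proof of this lemma but simply records the formulas as specializations of the general right-module structure from Rieffel's Symmetric Imprimitivity Theorem \cite{Rieffel76, Rieffel82, Rieffel83}, with the modular functions trivial since $G$ is unimodular and $H$, $K$ are discrete. Your plan to substitute the parametrization $n \mapsto (n,[dn]_c)$ of $H$ and the identification of $K\backslash G$ with $\T$ into those general formulas, and then specialize to the generators $U_\alpha$, $V_\alpha$, is exactly how these formulas are obtained.
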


	\subsection{Explicit construction of equivalence bimodules at each stage.}
	
	\begin{proposition}\label{construction-beta}
		We follow the notation established in Notation \ref{nc-solenoid-notation} .
		Let $P$ be a projection in $M_m\left( A_{\alpha_0} \right)$ with $\tau_{\alpha_{2n}}\left( P \right) = c_0 \alpha_0 + d_0 = c_{2n} \alpha_{2n} + d_{2n}$.
		Assume $P$ satisfies Condition \ref{the-condition}.
		For each $n \in \N$, let $a_{2n}$ and $b_{2n}$ be a pair of integers satisfying $a_{2n} d_{2n} - b_{2n} c_{2n} = 1$ and set 
		\[
		\beta_{2n} = \dfrac{a_{2n} \alpha_{2n} + b_{2n} }{c_{2n} \alpha_{2n} + d_{2n}}.
		\]
		In particular, we can uniquely choose $a_{2n}$ and $b_{2n}$ so that $\beta_{2n}$ is between $0$ and $1$.
		Then for each $n \in \N$, $C_c\left( \R \times \Z_{c_{2n}} \right)$, suitably completed, has the structure of a $A_{\beta_{2n}}$-$A_{\alpha_{2n}}$-equivalence bimodule.
		
		Moreover, we can form a noncommutative solenoid $\A_\beta^\S$ by taking the direct limit of irrational rotation algebras $A_{\beta_{2n}}$ with the appropriate connecting maps.
	\end{proposition}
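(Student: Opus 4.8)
The plan is to prove the two assertions separately, the first by invoking Rieffel's construction directly and the second by a determinant bookkeeping argument in $\mathrm{SL}_2(\Z)$.

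For the first assertion, I would fix $n$ and check that the data $(\alpha_{2n}, a_{2n}, b_{2n}, c_{2n}, d_{2n}, \beta_{2n})$ satisfies all the hypotheses of Notation \ref{notation-a-b-c-d}, so that Theorem \ref{alpha-beta-bimodule-structure} applies verbatim with $\alpha = \alpha_{2n}$, $c = c_{2n}$, $d = d_{2n}$, and $G = \R \times \Z_{c_{2n}}$. Concretely: $\alpha_{2n}$ is irrational (if some $\alpha_{2n}$ were rational, the relation $p\alpha_{j+1} = \alpha_j + x_j$ would force every $\alpha_j$ rational, contradicting irrationality of $\alpha$); the integers $c_{2n}, d_{2n}$ generate $\Z$ by Lemma \ref{condition-useful-form}; the quantity $c_{2n}\alpha_{2n} + d_{2n} = \tau_{\alpha_{2n}}(P) \in (0,m)$ is in particular nonzero; and $c_{2n} = c_0 p^{2n} \neq 0$ once $c_0 \neq 0$ (the case $c_0 = 0$, which by Condition \ref{the-condition} forces $d_0 = 1$ and hence $\beta_{2n} = \alpha_{2n}$, only reproduces $\A_\alpha^\S$ itself). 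Existence of $a_{2n}, b_{2n}$ with $a_{2n}d_{2n} - b_{2n}c_{2n} = 1$ is B\'ezout's identity. With these checks, Theorem \ref{alpha-beta-bimodule-structure} says exactly that $C_c(\R \times \Z_{c_{2n}})$, suitably completed, is an $A_{\beta_{2n}}$-$A_{\alpha_{2n}}$-equivalence bimodule (the index $k$ of Lemma \ref{iso-right-module} is $1$ here, since the unnormalized trace of $P$ is precisely $c_{2n}\alpha_{2n} + d_{2n}$). For the uniqueness of the choice making $\beta_{2n} \in (0,1)$, I would observe that the general integral solution is $(a_{2n} + tc_{2n}, b_{2n} + td_{2n})$, $t \in \Z$, and a direct computation gives $\frac{(a_{2n}+tc_{2n})\alpha_{2n} + (b_{2n}+td_{2n})}{c_{2n}\alpha_{2n}+d_{2n}} = \beta_{2n} + t$; since $\beta_{2n}$ is irrational (an integral M\"obius image of the irrational $\alpha_{2n}$), exactly one translate lies in $(0,1)$.

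For the second assertion, the crux is to produce connecting maps $\psi_n : A_{\beta_{2n}} \to A_{\beta_{2n+2}}$ sending each generator to the $p$-th power of the corresponding generator, as in Theorem \ref{direct-limit-realization}; such a $\ast$-homomorphism exists and the limit is a noncommutative solenoid precisely when $p^2\beta_{2n+2} \equiv \beta_{2n} \pmod{\Z}$ for every $n$. I would verify this congruence by viewing $\beta_{2n} = g_{2n}\cdot\alpha_{2n}$ as the M\"obius action of $g_{2n} = \begin{pmatrix} a_{2n} & b_{2n} \\ c_{2n} & d_{2n}\end{pmatrix} \in \mathrm{SL}_2(\Z)$, and recording $\alpha_{2n+2} = h_n\cdot\alpha_{2n}$ with $h_n = \begin{pmatrix} 1 & x_{2n}+px_{2n+1} \\ 0 & p^2\end{pmatrix}$, using $p^2\alpha_{2n+2} = \alpha_{2n} + x_{2n} + px_{2n+1}$. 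The relations $c_{2n+2} = p^2 c_{2n}$ and $d_{2n+2} = d_{2n} - c_{2n}(x_{2n}+px_{2n+1})$ then let me compute $\diag(p^2,1)\,g_{2n+2}h_n = p^2\tilde M$, where $\tilde M \in \mathrm{SL}_2(\Z)$ has bottom row exactly $(c_{2n}, d_{2n})$ and $\det \tilde M = a_{2n+2}d_{2n+2} - b_{2n+2}c_{2n+2} = 1$. Hence $p^2\beta_{2n+2} = \tilde M\cdot\alpha_{2n}$, and since $\tilde M$ and $g_{2n}$ are two elements of $\mathrm{SL}_2(\Z)$ with the same bottom row, $\tilde M g_{2n}^{-1} = \begin{pmatrix} 1 & N \\ 0 & 1\end{pmatrix}$ for some $N \in \Z$, giving $p^2\beta_{2n+2} = \beta_{2n} + N \equiv \beta_{2n} \pmod{\Z}$. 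Having established this, I would set the odd entries (e.g. $\beta_{2n+1}$ the representative of $p\beta_{2n+2}$ in $[0,1)$) to exhibit a sequence $\beta = (\beta_n) \in \Omega_p$ with the prescribed even entries, so that Theorem \ref{direct-limit-realization} identifies $\varinjlim(A_{\beta_{2n}}, \psi_n)$ with the noncommutative solenoid $\A_\beta^\S$.

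The main obstacle is the congruence $p^2\beta_{2n+2} \equiv \beta_{2n} \pmod{\Z}$: because $a_{2n}, b_{2n}$ and $a_{2n+2}, b_{2n+2}$ are selected independently at each stage (normalized only by $\beta_{2n} \in (0,1)$), there is no term-by-term relation among them and the congruence cannot be read off directly. The resolution is the observation that $p^2\beta_{2n+2}$ and $\beta_{2n}$ are M\"obius images of $\alpha_{2n}$ under two $\mathrm{SL}_2(\Z)$ matrices sharing the bottom row $(c_{2n}, d_{2n})$, which forces them to differ by an integer translation; carefully tracking how $c_{2n}$ and $d_{2n}$ evolve through the solenoid relations is the computational heart of the argument.
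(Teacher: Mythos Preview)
Your proposal is correct and follows essentially the same strategy as the paper: invoke Lemma~\ref{condition-useful-form} and Theorem~\ref{alpha-beta-bimodule-structure} for the bimodule at each stage, note that the B\'ezout solutions $(a_{2n},b_{2n})$ are unique up to $(a_{2n}+\ell c_{2n},b_{2n}+\ell d_{2n})$ which shifts $\beta_{2n}$ by $\ell$, and then verify $p^2\beta_{2n+2}\equiv\beta_{2n}\pmod\Z$ by exploiting that two solutions of $ad-bc=1$ with the same $(c,d)$ differ in $a$ by a multiple of $c$.

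The only real difference is packaging. The paper carries out the congruence check by a bare-hands computation: with $C=cp^2$, $D=d-ck$ it writes
\[
p^2\beta_{2n+2}-\beta_{2n}=\frac{(A-a)\alpha_{2n}+(Ak+Bp^2-b)}{c\alpha_{2n}+d},
\]
uses $AD-BC=1$ to eliminate $Ak$, and simplifies the quotient to $(A-a)/c$, which is an integer because $ad-bc=Ad-(Ak+Bp^2)c=1$. Your M\"obius/$\mathrm{SL}_2(\Z)$ formulation encodes exactly the same identities: your $\tilde M$ is the matrix $\begin{pmatrix}A & Ak+Bp^2\\ c & d\end{pmatrix}$, and the step $\tilde M g_{2n}^{-1}=\begin{pmatrix}1&N\\0&1\end{pmatrix}$ is precisely the statement that $(A-a)/c\in\Z$. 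Your version is arguably tidier conceptually (the ``same bottom row in $\mathrm{SL}_2(\Z)$ forces an integer translation'' principle replaces several lines of algebra), while the paper's direct computation makes the integer $N=(A-a)/c$ explicit, which is later used in Lemma~\ref{lemma-iota-preserves-inner-product}. You also handle the degenerate case $c_0=0$ explicitly, which the paper leaves implicit.
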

	
	\begin{proof}
		Since each connecting map $\varphi_n : A_{\alpha_{2n}} \rightarrow A_{\alpha_{2n + 2}}$ is a unital inclusion, we can view $P$ as a projection in $M_m\left( A_{\alpha_{2n}} \right)$ for all $n$.
		By Lemma \ref{condition-useful-form}, $c_{2n}$ and $d_{2n}$ together generate $\Z$.
		The $A_{\beta_{2n}}$-$A_{\alpha_{2n}}$-equivalence bimodule structure is then the one from Theorem \ref{alpha-beta-bimodule-structure}.
		
		Since $c_{2n}$ and $d_{2n}$ are coprime, we can find integers $a_{2n}$ and $b_{2n}$ such that $a_{2n} d_{2n} - b_{2n} c_{2n} = 1$. 
		Moreover, if $a_{2n}$ and $b_{2n}$ are a pair of such integers, as are $a_{2n} + \ell c_{2n}$ and $b_{2n} + \ell d_{2n}$ for any integer $\ell$.
		If $\beta_{2n} = \left( a_{2n} \alpha_{2n} + b_{2n} \right) / \left( c_{2n} \alpha_{2n} + d_{2n} \right)$, then
		\[
		\dfrac{\left( \alpha_{2n} + \ell c_{2n} \right) + \left( b_{2n} + \ell d_{2n} \right)}{ c_{2n} \alpha_{2n} + d_{2n} } = \beta_{2n} + \ell. 
		\] 
		It follows that there is a unique pair of $a_{2n}$ and $b_{2n}$ putting $\beta_{2n}$ is between $0$ and $1$.
		
		We show that the family of irrational rotation algebras $\left( A_{\beta_{2n}} \right)_{n \in \N}$ form a noncommutative solenoid by taking the direct limit with the prescribed connecting maps.
		To this end, we simply need to check that $p^2 \beta_{2n + 2} \equiv \beta_{2n} \mod \Z$ for all $n \in \N$.
		
		To ease notation, we denote $a_{2n}$, $b_{2n}$, $c_{2n}$, $d_{2n}$, $a_{2n+2}$, $b_{2n+2}$, $c_{2n+2}$, and $d_{2n+2}$ by $a$, $b$, $c$, $d$, $A$, $B$, $C$, and $D$, respectively.
		By definition of $\alpha$, $C = cp^2$ and $D = d - ck$ for some of $k \in \{ 0, \dotsc, p^2 - 1 \}$. We have
		\[
		\beta_{2n} = \dfrac{a\alpha_{2n}+ b}{c\alpha_{2n} + d}, \quad \text{and} \quad \beta_{2n+2} = \dfrac{A \alpha_{2n+2} + B}{C\alpha_{2n+2} + D} = \dfrac{A \alpha_{2n+2} + B}{(cp^2)\alpha_{2n+2} + (d - kc)}.
		\]
		We show that $p^2\beta_{2n+2} - \beta_{2n}$ is an integer.
		With a few steps of algebra, one checks that
		\[
		p^2 \beta_{2n+2} - \beta_{2n} = \dfrac{(A - a)\alpha_{2n} + (Ak + Bp^2 - b)}{c\alpha_{2n} + d}.
		\]
		Note that $Ad - Ack - Bcp^2 = 1$, so $Ak = \left( Ad - Bcp^2 - 1 \right) / c$, and arrive at the following simplification:
		\[
		\dfrac{(A - a)\alpha_{2n} + (Ak + Bp^2 - b)}{c\alpha_{2n} + d} = \dfrac{A-a}{c}.
		\]
		Since $1 = ad - bc = Ad - \left(Ak + Bp^2\right)c$, it must be the case that $A-a = \ell c$ for some integer $\ell$. Thus, $p^2 \beta_{2n+2} \equiv \beta_{2n} \mod \Z$.
		Note that if both $\beta_{2n}$ and $\beta_{2n+2}$ are chosen to be between $0$ and $1$, then $p^2 \beta_{2n+2} - \beta_{2n} \in \{ 0, \dotsc, p^2 - 1 \}$.
	\end{proof}
	
	\begin{remark}
		In the above proposition, the sequence $\left( \beta_{2n} \right)_{n \in \N}$ determines a unique element $\beta = \left( \beta_k \right)_{k \in \N}$ of $\Xi_p$ by Remark \ref{determine-alpha}. By the same remark, all other choices for the $a_{2n}$'s and $b_{2n}$'s give rise to sequences in $\Omega_p$ that lead to the same noncommutative solenoid $\A_\beta^\S$.
	\end{remark}
	
	Let $X_{2n} = \bar{C_c\left( \R \times \Z_{c_{2n}} \right)}$ denote the $A_{\beta_{2n}}$-$A_{\alpha_{2n}}$-equivalence bimodule given in the above proposition.
	When working with $X_{2n}$, results are almost always formulated in terms for the following dense subset.
	For $f \in C_c\left(\R\right)$ and $j \in \{ 0, \dotsc, c_{2n} - 1 \}$, let $f\delta_j$ be the function in $C_c\left( \R \times \Z_{c_{2n}} \right)$ given by
	\[
	f \delta_j \left( t, [m] \right) = 
	\begin{cases}
		f \left( t \right) & \text{ if } [m] = [j] \text{ in } \Z_{c_{2n}},\\
		0 &\text{ otherwise}. 
	\end{cases}
	\]
	One can regard $\delta_j$ as the indicator function on $[j] \in \Z_{c_{2n}}$.
	Then the set
	\[
	\left\{ f \delta_j \:: j \in f \in C_c\left(\R\right) \text{ and } j \in \{ 0, \dotsc, c_{2n} - 1 \}  \right\}
	\]
	spans a dense subalgebra of $C_c\left( \R \times \Z_{c_{2n}} \right)$.

	We now define the key embedding of $X_{2n}$ in $X_{2n+2}$ for each $n$ that gives us the directed systems of equivalence bimodules.
	
	\begin{definition}\label{define-bimodule-embedding}
		We define $\iota_n : X_{2n} \rightarrow X_{2n+2}$ on functions of the form $f \delta_j$ and extend by linearity (Since $\Z_{c_{2n}}$ is finite, extending by linearity is especially simple):
		\begin{equation}
			\iota_n(f \delta_j)(t, [m]) = \dfrac{1}{\sqrt{p}} \sum_{i = 0}^{p-1} f\left( \dfrac{t}{p} \right) \delta_{jp + i c_0 p^{2n+1}}([m]),
		\end{equation}
		for $t \in \R$ and $[m] \in \Z_{c_{2n + 2}}$. That is, $\iota_n(f \delta_j)(t, [m])$ evaluates to $\left( 1/\sqrt{p} \right)f(t/p)$ when $m = jp + i c_0 p^{2n+1} + \ell p^{2n+2}$ for $i \in \{ 0, \dotsc, p-1 \}$ and $\ell \in \Z$, and zero otherwise. 
	\end{definition}
	
	It remains to show that $\left( X_{2n}, \iota_n \right)_{n \in \N}$ satisfies Definition \ref{directed-system}.
	Specifically, we need to check that Equations \ref{preserve-right-inner-product} - \ref{preserve-left-action} are satisfied.
	
	\begin{notation}
		In the remainder of this section, for $\A_\alpha^\S = \varinjlim \left( A_{\alpha_{2n}}, \psi_n \right)_{n \in \N}$, we let $\A_\beta^\S = \varinjlim \left( A_{\beta_{2n}}, \varphi_n \right)_{n \in \N}$ be the irrational noncommutative solenoid constructed in Proposition \ref{construction-beta}.
		Both embeddings $\varphi_n : A_{\beta_{2n}} \rightarrow A_{\beta_{2n + 2}}$ and $\psi_n : A_{\alpha_{2n}} \rightarrow A_{\alpha_{2n + 2}}$ are as given in Lemma \ref{rotation-algebra-embedding-formula}.
		Lastly, for any $f \in C_c\left( \R \right)$, we let $f_{\xi}(t)$ be $f(t + \xi)$ for any fixed $\xi \in \R$. 
	\end{notation}

	\begin{lemma}\label{iota-preserves-left-action}
		For each $n \in \N$, $\iota_n: X_{2n} \rightarrow X_{2n+2}$ preserves left module action. That is,
		\begin{equation}
			\iota_{n} \left( f \cdot F \right) = \varphi_n\left( f \right) \cdot \iota_n \left( F \right),
		\end{equation}
		for all $f \in A_{\beta_{2n}}$ and $F \in X_{2n}$.
	\end{lemma}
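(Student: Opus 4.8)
The plan is to reduce the identity to the two generators of $A_{\beta_{2n}}$ acting on the dense spanning set $\{f\delta_j\}$, and then to verify each case by a direct computation that hinges on two structural facts already in hand. Since $\iota_n$ is linear, the left action is an algebra action (so that $\iota_n\bigl((fg)\cdot F\bigr)=\iota_n\bigl(f\cdot(g\cdot F)\bigr)$), and $\varphi_n$ is a unital $*$-homomorphism with $\varphi_n(U_{\beta_{2n}})=U_{\beta_{2n+2}}^p$ and $\varphi_n(V_{\beta_{2n}})=V_{\beta_{2n+2}}^p$ by Theorem \ref{direct-limit-realization}, it suffices to prove
\[
\iota_n(U_{\beta_{2n}}\cdot F)=U_{\beta_{2n+2}}^p\cdot\iota_n(F),\qquad \iota_n(V_{\beta_{2n}}\cdot F)=V_{\beta_{2n+2}}^p\cdot\iota_n(F)
\]
for every $F$ in the dense subset $\{f\delta_j : f\in C_c(\R),\ 0\le j\le c_{2n}-1\}$. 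The general statement then follows by an induction on words in $U_{\beta_{2n}}^{\pm1},V_{\beta_{2n}}^{\pm1}$ (using that $\varphi_n$ preserves products and inverses), linearity in $F$, and a routine density and continuity argument.

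For the $U$-case I would use the action formula \eqref{left-Ubeta-action} to write $U_{\beta_{2n}}\cdot(f\delta_j)=f_{-\gamma_{2n}}\,\delta_{j+1}$, with the index read modulo $c_{2n}$; applying Definition \ref{define-bimodule-embedding} then yields a function supported on the classes $(j+1)p+i\,c_0 p^{2n+1}$ in $\Z_{c_{2n+2}}$ with value $(1/\sqrt p)\,f(t/p-\gamma_{2n})$. On the other side, $U_{\beta_{2n+2}}^p$ shifts the group variable by $p$, so $U_{\beta_{2n+2}}^p\cdot\iota_n(f\delta_j)$ is supported on the \emph{same} classes $(j+1)p+i\,c_0 p^{2n+1}$, now with value $(1/\sqrt p)\,f(t/p-\gamma_{2n+2})$. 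The two coincide precisely because the trace of $P$ is invariant under the connecting maps: $c_{2n}\alpha_{2n}+d_{2n}=c_0\alpha_0+d_0$ is independent of $n$, whence $\gamma_{2n}=1/(c_{2n}\alpha_{2n}+d_{2n})$ does not depend on $n$ and $f(t/p-\gamma_{2n})=f(t/p-\gamma_{2n+2})$.

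For the $V$-case, formula \eqref{left-Vbeta-action} shows $V_{\beta_{2n}}\cdot(f\delta_j)=g\,\delta_j$ with $g(t)=e^{2\pi i(t-a_{2n}j)/c_{2n}}f(t)$, so applying $\iota_n$ contributes, on the class $jp+i\,c_0 p^{2n+1}$, the factor $e^{-2\pi i a_{2n}j/c_{2n}}\,e^{2\pi i t/(p c_{2n})}\,f(t/p)$. Since $V_{\beta_{2n+2}}^p$ acts by multiplication by $e^{2\pi i p(t-a_{2n+2}m)/c_{2n+2}}$, evaluating at $m=jp+i\,c_0 p^{2n+1}$ and substituting $c_{2n+2}=c_0 p^{2n+2}=c_{2n}p^2$ produces $e^{2\pi i t/(p c_{2n})}\,e^{-2\pi i a_{2n+2}j/c_{2n}}\,e^{-2\pi i a_{2n+2}i}$. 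The last factor equals $1$ because $a_{2n+2}i\in\Z$, and the remaining $j$-dependent factors on the two sides agree because $a_{2n+2}\equiv a_{2n}\pmod{c_{2n}}$ — exactly the relation $A-a=\ell c$ established in the proof of Proposition \ref{construction-beta}.

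The main obstacle is purely bookkeeping: one must track the finite second variable modulo $c_{2n+2}=c_0 p^{2n+2}$ and confirm that the $p$-element index set $\{(j+1)p+i\,c_0 p^{2n+1} : 0\le i\le p-1\}$ behaves correctly under wraparound. This works because $p\cdot c_0 p^{2n+1}=c_{2n+2}\equiv 0$, so a cyclic shift of $i$ merely permutes the same $p$ classes and leaves the sum in Definition \ref{define-bimodule-embedding} unchanged; one must also verify that every stray exponential carries an integer argument and hence equals $1$. Once the two generator identities are checked on $\{f\delta_j\}$, the reduction above upgrades them to $\iota_n(f\cdot F)=\varphi_n(f)\cdot\iota_n(F)$ for all $f\in A_{\beta_{2n}}$ and $F\in X_{2n}$, which is \eqref{preserve-left-action}.
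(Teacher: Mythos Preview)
Your proposal is correct and follows essentially the same approach as the paper: reduce to the two generators acting on $f\delta_j$, use that $\gamma_{2n}=1/(c_{2n}\alpha_{2n}+d_{2n})$ is independent of $n$ for the $U$-case, and use $a_{2n+2}\equiv a_{2n}\pmod{c_{2n}}$ (from Proposition~\ref{construction-beta}) for the $V$-case. Your treatment is in fact slightly more careful than the paper's about the wraparound in $\Z_{c_{2n+2}}$ and the density/continuity reduction, but the substance is identical.
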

	
	\begin{proof}
		We check that $\iota_{n}$ respects left module action by showing that for any $f \in C_c\left( \R \right)$ and $j \in \{ 0, \dotsc, c_{2n} - 1 \}$,
		\[
		\iota_{n} \left( U_{\beta_{2n}} \cdot \left( f \delta_j \right) \right) = U_{\beta_{2n+2}}^p \cdot \iota_{n}\left( f \delta_j \right),\text{ and }
		\iota_{n} \left( V_{\beta_{2n}} \cdot \left( f \delta_j \right) \right) = V_{\beta_{2n+2}}^p \cdot \iota_{n}\left( f \delta_j \right).
		\]
		
		Since $\tau_{\alpha_{2n}} \left( P \right) = c_0 \alpha_0 + d_0 = c_{2n} \alpha_{2n} + d_{2n}$, we have $\gamma_{2n} = 1/ \left( c_{2n} \alpha_{2n} + d_{2n} \right) = 1/ \left(  c_0 \alpha_0 + d_0 \right)$ (see Notation \ref{notation-a-b-c-d}), and we simply denote this number by $\gamma$.
		
		For any $\left( t, [m] \right) \in \R \times \Z_{c_{2n+2}}$, it follows from Equation \ref{left-Ubeta-action} that
		
		\begin{align*}
			\iota_{n}\left( U_{\beta_{2n}} \cdot \left( f \delta_j \right) \right)\left(t, [m] \right) &= \iota_{n}\left( f_{-\gamma} \delta_{j+1} \right) \left(t, [m] \right)\\
			&= \dfrac{1}{\sqrt{p}}\sum_{i = 0}^{p-1} f_{-\gamma} \delta_{(j+1)p+ i c_0 p^{2n+1}} \left(  \dfrac{t}{p}, [m] \right).
		\end{align*}
		
		On the other hand, 
		\begin{align*}
			U_{\beta_{2n+2}}^p \cdot \iota_{n} \left( f  \delta_j \right) \left( t, [m] \right) &= \iota_{n}\left( f \delta_j \right)\left( t - p\gamma, [m-p] \right)\\
			&= \dfrac{1}{\sqrt{p}}\sum_{i = 0}^{p-1} f \delta_{jp + i c_0 p^{2n+1}} \left( \frac{t-p\gamma}{p}, [m-p] \right)\\
			&= \dfrac{1}{\sqrt{p}} \sum_{i = 0}^{p-1} f_{-\gamma} \delta_{jp + i c_0 p^{2n+1} + p} \left( \frac{t}{p}, [m] \right).
		\end{align*}
		
		For any $(t, [m]) \in \R \times \Z_{c_{2n}}$, by Equation \ref{left-Vbeta-action},
		\[
		\left( V_{\beta_{2n}} \cdot \left( f \delta_j \right) \right) (t, [m]) = \begin{cases}
			e^{2 \pi i \left( t - a_{2n}j \right)/c_{2n}} f(t) &\text{if } [m] = [j] \text{ in } \Z_{c_{2n}},\\
			0 &\text{otherwise}.
		\end{cases}
		\]
		Then for $\left( t, [m] \right) \in \R \times \Z_{c_{2n+2}}$,
		\[
		\iota_{n}\left( V_{\beta_{2n}} \cdot \left( f \delta_j \right) \right)\left(t, [m] \right) = \left( 1/\sqrt{p} \right)e^{2 \pi i (t/p - a_{2n} j )/c_{2n}}f(t/p)
		\]
		when $m = jp + i c_0 p^{2n+1} + \ell p^{2n+2}$ for any $i \in \{ 0, \dotsc, p-1 \}$ and $\ell \in \Z$.
		Otherwise it evaluates to zero.
		
		On the other hand, using the fact that $p c_{2n} = c_0 p^{2n+1} = c_{2n+2}/p$, we have 
		\begin{align*}
			V_{\beta_{2n+2}}^p \cdot \iota_{n} \left( f \delta_j \right) \left( t, [m] \right) &= e^{2 \pi i \left[ (t - a_{2n+2}m)/c_{2n+2} \right]\cdot p} \iota_{n}\left( f \delta_j \right)\left( t, [m] \right)\\ 
			&= \dfrac{1}{\sqrt{p}} e^{2 \pi i (t - a_{2n+2}m)/p c_{2n}} \sum_{i=0}^{p-1} f \delta_{jp + i c_0 p^{2n+1}} \left( \frac{t}{p}, [m] \right),
		\end{align*}
		which is nonzero for $m = jp + i c_0 p^{2n+1} + \ell p^{2n+2}$ for any $i \in \{ 0, \dotsc, p-1 \}$ and $\ell \in \Z$.
		Notice that the evaluation is independent of the choice of representative for $[m]_{c_{2n+2}}$, so we let $\ell = 0$ and 
		\begin{align*}
			\left( 1/\sqrt{p} \right)e^{2 \pi i \left[ t - a_{2n+2}\left( jp + i c_0 p^{2n+1} \right)/pc_{2n} \right]} &= \left( 1/\sqrt{p} \right)e^{2 \pi i \left[ t - a_{2n+2}\left( jp + i c_0 p^{2n+1} \right)/ \left( c_0 p^{2n+1} \right) \right]}
		\end{align*}
		Moreover, since $a_{2n+2} = a_{2n} + z c_0 p^{2n}$ for some $z \in \Z$,
		\begin{align*}
			\dfrac{t - a_{2n+2}\left( jp + i c_0 p^{2n+1} \right)}{ c_0 p^{2n+1}} &= \dfrac{t - \left( a_{2n} + z c_0 p^{2n} \right)\left( jp + i c_0 p^{2n+1} \right)}{ c_0 p^{2n+1}}\\
			&= \dfrac{t - a_{2n}jp }{c_0 p^{2n+1}} + K_{2n}\\
			&= \dfrac{t/p - a_{2n}j}{c_0 p^{2n}} + K_{2n}\\
			&= \dfrac{t/p - a_{2n}j}{c_{2n}} + K_{2n}
		\end{align*}
		with integer $K_{2n} = -a_{2n} - zj - z i c_0 p^{2n}$.
		This shows that 
		$$
		e^{2 \pi i \left( t - a_{2n+2}\left( jp + i c_0 p^{2n+1} \right)/ p c_{2n} \right)} = e^{2 \pi i \left( t/p-a_{2n}j \right)/c_{2n}}
		$$ 
		and allows us to the conclude that $\iota_{n} \left( V_{\beta_{2n}} \cdot \left( f \delta_j \right) \right) = V_{\beta_{2n+2}}^p \cdot \iota_{n}\left( f \delta_j \right)$.
	\end{proof}

	\begin{lemma}\label{iota-preserves-right-action}
		For each $n \in \N$, $\iota_n: X_{2n} \rightarrow X_{2n+2}$ preserves right module action. That is,
		\begin{equation}
			\iota_{n} \left( F \cdot g \right) = \iota_n \left( F \right) \cdot \psi_n\left( g \right),
		\end{equation}
		for all $g \in A_{\alpha_{2n}}$ and $F \in X_{2n}$.
	\end{lemma}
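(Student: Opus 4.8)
The plan is to mirror the structure of the proof of Lemma \ref{iota-preserves-left-action}, now verifying Equation \ref{preserve-right-action} for the right $A_{\alpha_{2n}}$-action. Since $\{U_{\alpha_{2n}}, V_{\alpha_{2n}}\}$ generates $A_{\alpha_{2n}}$ and the functions $f\delta_j$ span a dense subspace of $X_{2n}$, and since $\psi_n(U_{\alpha_{2n}}) = U_{\alpha_{2n+2}}^p$ and $\psi_n(V_{\alpha_{2n}}) = V_{\alpha_{2n+2}}^p$ by Lemma \ref{rotation-algebra-embedding-formula}, it suffices to establish the two identities
\[
\iota_n\bigl(f\delta_j \cdot U_{\alpha_{2n}}\bigr) = \iota_n(f\delta_j)\cdot U_{\alpha_{2n+2}}^p, \qquad \iota_n\bigl(f\delta_j \cdot V_{\alpha_{2n}}\bigr) = \iota_n(f\delta_j)\cdot V_{\alpha_{2n+2}}^p,
\]
for all $f\in C_c(\R)$ and $j\in\{0,\dots,c_{2n}-1\}$.

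For the $U$-identity, I would first apply Equation \ref{right-Ualpha-action} to obtain $f\delta_j \cdot U_{\alpha_{2n}} = f_{-1}\,\delta_{j+d_{2n}}$, with the index reduced modulo $c_{2n}$, and then apply $\iota_n$. On the right-hand side, acting by $U_{\alpha_{2n+2}}^p$ shifts the $\R$-variable by $p$ (turning $f$ into $f_{-1}$) and the $\Z_{c_{2n+2}}$-index by $p\,d_{2n+2}$. Both sides then equal $(1/\sqrt{p})\,f_{-1}(t/p)$ supported on an arithmetic progression in $\Z_{c_{2n+2}}$, so the identity reduces to showing the two supports coincide modulo $c_{2n+2}$. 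The two base indices differ by $p(d_{2n}-d_{2n+2})$, and since $d_{2n}-d_{2n+2} = c_0\bigl(x_{2n}p^{2n}+x_{2n+1}p^{2n+1}\bigr)$ while $c_{2n+2}=c_0p^{2n+2}$, this difference is congruent to $c_0 x_{2n}p^{2n+1}$ modulo $c_{2n+2}$. Because $x_{2n}\in\{0,\dots,p-1\}$ and the summation index contributes $i\,c_0p^{2n+1}$ with $i$ ranging over $\{0,\dots,p-1\}$, the discrepancy is absorbed by the reindexing $i\mapsto i+x_{2n}\bmod p$, so the two supports are literally the same set.

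For the $V$-identity, I would use Equation \ref{right-Valpha-action}, recalling that $\gamma_{2n}=\gamma_{2n+2}=\gamma$ as established in the previous proof. On the left, $f\delta_j\cdot V_{\alpha_{2n}}$ equals $g\delta_j$ with $g(t)=e^{2\pi i(t/\gamma - j)/c_{2n}}f(t)$ (using the representative $m=j$, the phase being well defined on $\Z_{c_{2n}}$), so applying $\iota_n$ yields the phase factor $e^{2\pi i((t/p)/\gamma - j)/c_{2n}}$. On the right, acting by $V_{\alpha_{2n+2}}^p$ on $\iota_n(f\delta_j)$ multiplies by $e^{2\pi i(t/\gamma - m)p/c_{2n+2}}$; restricting to the support $m = jp + i\,c_0p^{2n+1}$ and using $c_{2n+2}=p^2c_{2n}$ together with $c_0p^{2n+1}=pc_{2n}$, the exponent simplifies to $\bigl[(t/p)/\gamma - j\bigr]/c_{2n} - i$, and the integer $i$ drops out, recovering exactly the left-hand phase.

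The main obstacle I anticipate is the bookkeeping in the $U$-case: correctly tracking the interplay between the shift $d_{2n}\to d_{2n+2}$, the $p$-adic digit $x_{2n}$, and the modulus $c_{2n+2}$, so as to confirm that the two arithmetic progressions in $\Z_{c_{2n+2}}$ agree as sets after permuting the summation index, rather than being merely translates of one another. This is the right-action analogue of the $V_{\beta_{2n}}$ computation in Lemma \ref{iota-preserves-left-action}; by comparison the $V$-identity here is routine once the support condition is substituted into the phase.
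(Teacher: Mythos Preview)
Your proposal is correct and follows essentially the same approach as the paper's proof: reduce to the two generator identities, handle $U_{\alpha_{2n}}$ by tracking the index shift $p(d_{2n}-d_{2n+2})\equiv c_0x_{2n}p^{2n+1}\bmod c_{2n+2}$ and absorbing it via $i\mapsto i+x_{2n}\bmod p$, and handle $V_{\alpha_{2n}}$ by substituting $m=jp+ic_0p^{2n+1}$ into the phase and using $c_0p^{2n+1}=pc_{2n}$ so that the $i$-term is an integer. Your write-up is in fact slightly cleaner than the paper's, which carries a minor typo in the expression for $d_{2n+2}$.
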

	
	\begin{proof}
		We check that $\iota_{n}$ respects left module action by showing that for any $f \in C_c\left( \R \right)$ and $j \in \{ 0, \dotsc, c_{2n} - 1 \}$,
		\[
		\iota_{n} \left( \left( f \delta_j \right) \cdot U_{\alpha_{2n}} \right) = \iota_{n}\left( f \delta_j \right) \cdot U_{\alpha_{2n+2}}^p,\text{ and }
		\iota_{n} \left( \left( f \delta_j \right) \cdot V_{\alpha_{2n}} \right) = \iota_{n}\left( f \delta_j \right) \cdot V_{\alpha_{2n+2}}^p.
		\]
		
		Since $\tau_{\alpha_{2n}} \left( P \right) = c_0 \alpha_0 + d_0 = c_{2n} \alpha_{2n} + d_{2n}$, we have $\gamma_{2n} = 1/ \left( c_{2n} \alpha_{2n} + d_{2n} \right) = 1/ \left(  c_0 \alpha_0 + d_0 \right)$ (see Notation \ref{notation-a-b-c-d}), and we simply denote this number by $\gamma$.
		
		For any $\left( t, [m] \right) \in \R \times \Z_{c_{2n+2}}$, it follows from Equation \ref{right-Ualpha-action} that
		
		\begin{align*}
			\iota_n \left( \left( f \delta_j \right) \cdot U_{\alpha_{2n}} \right) \left( t, [m] \right) &= \iota_n \left( f_{-1} \delta_{j + d_{2n}} \right)  \left( t, [m] \right)\\
			&= \dfrac{1}{\sqrt{p}} \sum_{i = 0}^{p-1} f_{-1} \delta_{ \left( j + d_{2n} \right) p + i c_0 p^{2n+1} } \left( \dfrac{t}{p}, [m] \right)
		\end{align*}
		
		On the other hand, 
		
		\begin{align*}
			\left( \iota_n \left( f \delta_j \right) \cdot U_{\alpha_{2n + 2}}^p \right) \left( t, [m] \right) &= \iota_n\left(  f \delta_j \right) \left( t - p, [m - pd_{2n+2}] \right)\\
			&= \dfrac{1}{\sqrt{p}}\sum_{i = 0}^{p-1} f \delta_{jp + i c_0 p^{2n+1}} \left( \dfrac{t-p}{p}, [m - pd_{2n+2}] \right)\\
			&= \dfrac{1}{\sqrt{p}}\sum_{i = 0}^{p-1} f_{-1} \delta_{jp + i c_0 p^{2n+1} + pd_{2n+2}} \left( \dfrac{t}{p}, [m] \right)
		\end{align*}
		
		Since $d_{2n+2} = d_{2n} - c_0 x_{2n} p^{2n} - x_{2n+1}p^{2n+1}$,
		\[
		jp + i c_0 p^{2n+1} + pd_{2n+2} \equiv jp + i c_0 p^{2n+1} + pd_{2n} + c_0 x_{2n} p^{2n+1} \mod p^{2n+2}
		\]
		
		For $i = 0, \dotsc, p-1$, $i c_0 p^{2n+1}$ goes through the same equivalence classes of $\Z_{c_0 p^{2n+2}} = \Z_{c_{2n+2}}$ as $\left( i + x_{2n} \right) c_0 p^{2n+1}$, so 
		\[
		\delta_{ \left( j + d_{2n} \right) p + i c_0 p^{2n+1} } = \delta_{jp + i c_0 p^{2n+1} + pd_{2n+2}},
		\]
		and $\iota_{n} \left( \left( f \delta_j \right) \cdot U_{\alpha_{2n}} \right) = \iota_{n}\left( f \delta_j \right) \cdot U_{\alpha_{2n+2}}^p$.
		
		For any $\left( t, [m] \right) \in \R \times \Z_{c_{2n}}$, by Equation \ref{right-Valpha-action}, we have 
		\[
		\left( \left( f \delta_j \right) \cdot V_{\alpha_{2n}} \right) \left( t, [m] \right) = \begin{cases}
			e^{2 \pi i \left( t/\gamma - j \right)/c_{2n}} f(t) &\text{if } [m] = [j] \text{ in } \Z_{c_{2n}},\\
			0 &\text{otherwise}.
		\end{cases}
		\]
		Then for $\left( t, [m] \right) \in \R \times \Z_{c_{2n + 2}}$,
		\[
		\iota_n \left( \left( f \delta_j \right) \cdot V_{\alpha_{2n}} \right) \left( t, [m] \right) = \left( 1/\sqrt{p} \right) e^{2 \pi i \left( t/\left( p \gamma \right) - j \right)/ c_{2n}} f(t/p)
		\]
		when $m = jp + ic_0 p^{2n+1} + \ell p^{2n+2}$ for any $i \in \{ 0, \dotsc, p-1 \}$ and $\ell \in \Z$.
		Otherwise it evaluates to zero.
		
		On the other hand, since $p c_{2n} = c_0 p^{2n+1} = c_{2n+2}/p$, we have 
		\begin{align*}
			\left(\iota_{n} \left( f \delta_j \right) \cdot V_{\alpha_{2n+2}}^p \right) \left( t, [m] \right) &= e^{2 \pi i \left[ t/\gamma - m / c_{2n+2} \right]\cdot p} \iota_{n}\left( f \delta_j \right)\left( t, [m] \right)\\ 
			&= e^{2 \pi i \left( t/\gamma - m / p c_{2n} \right)} \iota_{n}\left( f \delta_j \right)\left( t, [m] \right)\\
			&= \dfrac{1}{\sqrt{p}} e^{2 \pi i \left( t/\gamma - m / p c_{2n} \right)} \sum_{i=0}^{p-1} f \delta_{jp + i c_0 p^{2n+1}} \left( \frac{t}{p}, [m] \right),
		\end{align*}
		which is nonzero for $m = jp + i c_0 p^{2n+1} + \ell p^{2n+2}$ for any $i \in \{ 0, \dotsc, p-1 \}$ and $\ell \in \Z$.
		Choose $\ell = 0$ (the computations are independent of this choice), we have that for $m =  jp + i c_0 p^{2n+1}$,
		\begin{align*}
			e^{2 \pi i \left( t/\gamma - m / p c_{2n} \right)} &= e^{2 \pi i \left[ \left( t/\gamma - jp + ic_0 p^{2n+1} \right) / p c_{2n} \right]}\\
			&= e^{ 2 \pi i \left[ \left( t/ \left( p \gamma \right)- j + ic_0 p^{2n} \right) / c_{2n} \right] }\\
			&= e^{ 2 \pi i \left[ \left( t/ \left( p \gamma \right)- j \right) / c_{2n} \right] } 
		\end{align*}
		The last equality follow from $c_0 p^{2n} = c_{2n}$.
		In conclusion, $\iota_{n} \left( \left( f \delta_j \right) \cdot V_{\alpha_{2n}} \right) = \iota_{n}\left( f \delta_j \right) \cdot V_{\alpha_{2n+2}}^p$.
	\end{proof}
	
	We now prove the following major lemma of this paper.

	\begin{lemma}\label{lemma-iota-preserves-inner-product}
		For each $n \in \N$, $\iota_n: X_{2n} \rightarrow X_{2n+2}$ preserves left inner product. That is,
		\begin{equation}
			\varphi_n\left( \inner{F_1}{F_2}_{A_{\beta_{2n}}} \right) = \inner{\iota_n\left( F_1 \right)}{\iota_n\left( F_2 \right)}_{A_{\beta_{2n + 2}}}, 
		\end{equation}
		for all $F_1, F_2 \in X_{2n}$.
	\end{lemma}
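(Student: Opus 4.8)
\emph{Strategy.} Both sides of the asserted identity are sesquilinear in $(F_1,F_2)$ and continuous, and $X_{2n}$ is the completion of the span of the elementary tensors $f\delta_j$ with $f\in C_c(\R)$ and $j\in\{0,\dots,c_{2n}-1\}$. Hence it suffices to verify the equality for $F_1=f\delta_j$ and $F_2=g\delta_{j'}$. The proof is then a direct computation combining three ingredients, all already available: the left inner product formula \eqref{left-inner-product} (applied once with the data $c_{2n},d_{2n},\gamma$ for $A_{\beta_{2n}}$ and once with $c_{2n+2},d_{2n+2},\gamma$ for $A_{\beta_{2n+2}}$, where $\gamma=1/(c_{2n}\alpha_{2n}+d_{2n})$ is the same at every stage), the defining formula for $\iota_n$ in Definition \ref{define-bimodule-embedding}, and the description of $\varphi_n$ on functions coming from Lemma \ref{rotation-algebra-embedding-formula}, namely that $\varphi_n$ sends an element $\Phi$ of $A_{\beta_{2n}}$, viewed as a function of $(r,k)$, to the function that equals $\Phi(pr,k/p)$ when $p\mid k$ and vanishes otherwise.

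\emph{The left-hand side.} I first evaluate $\inner{f\delta_j}{g\delta_{j'}}_{A_{\beta_{2n}}}$ from \eqref{left-inner-product}. The two indicator constraints $[d_{2n}m]_{c_{2n}}=[j]$ and $[d_{2n}m-k]_{c_{2n}}=[j']$ force $k\equiv j-j'\pmod{c_{2n}}$ and pin $m$ to the single residue class $m\equiv d_{2n}^{-1}j\pmod{c_{2n}}$, where $d_{2n}^{-1}$ is the inverse of $d_{2n}$ modulo $c_{2n}$ (this inverse exists by Lemma \ref{condition-useful-form}). Thus the inner product is supported on $k\equiv j-j'\pmod{c_{2n}}$ and is a sum over that coset. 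Applying $\varphi_n$ then imposes $p\mid k$ and substitutes $r\mapsto pr$, leaving an explicit sum over the step-$c_{2n}$ lattice in which $g$ is evaluated at its argument shifted by $-(k/p)\gamma$.

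\emph{The right-hand side.} Here I expand $\iota_n(f\delta_j)=\tfrac{1}{\sqrt p}\sum_{i=0}^{p-1}f(\cdot/p)\,\delta_{a_i}$ with $a_i=jp+ic_0p^{2n+1}=p(j+ic_{2n})$, and similarly for $g\delta_{j'}$ with $b_{i'}=p(j'+i'c_{2n})$, and substitute into \eqref{left-inner-product} with $c_{2n+2}=p^2c_{2n}$ and $d_{2n+2}$. The bilinearity produces a double sum over $(i,i')$ against the $\delta$-constraints $[d_{2n+2}m]_{c_{2n+2}}\in\{a_i\}$ and $[d_{2n+2}m-k]_{c_{2n+2}}\in\{b_{i'}\}$. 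The key simplification is that, since $a_i\equiv pj$ and $b_{i'}\equiv pj'$ modulo $pc_{2n}$ and $d_{2n+2}$ is invertible, these two constraints collapse to the single condition $m\equiv p\,d_{2n+2}^{-1}j\pmod{pc_{2n}}$ together with the support requirement $k\equiv p(j-j')\pmod{pc_{2n}}$ on $k$; after dividing the argument of $f(\cdot/p)$ by $p$ this again becomes a sum over a step-$c_{2n}$ lattice, with $g$ shifted by $-(k/p)\gamma$.

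\emph{Matching and the main obstacle.} The two coset descriptions coincide because of the arithmetic relation $d_{2n+2}=d_{2n}-c_{2n}(x_{2n}+p\,x_{2n+1})$, which gives $d_{2n+2}\equiv d_{2n}\pmod{c_{2n}}$ and hence $d_{2n+2}^{-1}\equiv d_{2n}^{-1}\pmod{c_{2n}}$; together with $c_{2n+2}=p^2c_{2n}$ and the stage-independence of $\gamma$, this makes the support condition on $k$, the summation coset, and the argument of $g$ agree on both sides, so the two expressions match term by term. I expect the genuine difficulty to be not the congruence algebra but the careful $p$-power bookkeeping: one must track the factor $1/p$ produced by the $1/\sqrt p$ normalization of $\iota_n$ under the sesquilinear inner product against the change of the underlying lattice and Haar normalization in passing from $\Z_{c_{2n}}$ to $\Z_{c_{2n+2}}=\Z_{p^2 c_{2n}}$, and confirm that all powers of $p$ cancel so that the constants are exactly equal rather than off by a factor of $p$. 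This reconciliation of the normalizing constants is the step that requires the most care.
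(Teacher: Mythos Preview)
Your approach coincides with the paper's: both reduce to $F_1=f\delta_j$, $F_2=g\delta_{j'}$, evaluate each side from \eqref{left-inner-product}, Definition~\ref{define-bimodule-embedding}, and Lemma~\ref{rotation-algebra-embedding-formula}, and then match. The organisation differs only cosmetically. Where you collapse the set $\{a_i\}_{i=0}^{p-1}$ to the single coset $pj\pmod{pc_{2n}}$ and read off the support condition on $k$ and the step-$pc_{2n}$ summation lattice directly, the paper keeps the double sum over $(i,i')$, fixes $k=(j-j')p+[K+\ell p]c_0p^{2n+1}$, counts that exactly the $p$ pairs with $i-i'\equiv K\pmod p$ contribute, and then merges the residual sum over $i$ with the $m'$-sum into a single step-$c_{2n}$ sum using $\gcd(a_{2n+2},p)=1$. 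For the matching of offsets the paper uses $a_{2n+2}=a_{2n}+zc_{2n}$ (established in the proof of Proposition~\ref{construction-beta}), which is equivalent to your congruence $d_{2n+2}\equiv d_{2n}\pmod{c_{2n}}$ since $a_{2n}d_{2n}\equiv 1\pmod{c_{2n}}$.

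Your instinct that the $p$-power bookkeeping is the crux is exactly right: that is where the paper spends almost all of its effort, and it does not leave the cancellation to the reader but performs it through the explicit $(i,i')$ pair count against the $1/\sqrt{p}$ normalisation. Your proposal identifies this step but stops short of carrying it out, so what you have is a correct outline rather than a complete proof; filling it in amounts to exactly the counting argument the paper gives (or, in your coset formulation, to checking that after the change of variable $m\mapsto m/p$ the step-$pc_{2n}$ lattice on the right becomes the step-$c_{2n}$ lattice on the left with the constants lining up).
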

	
	We first give a formula for $\iota_{n} \left( \inner{f \delta_j}{g \delta_{j'}}_{A_{\beta_{2n}}} \right)$ in the following lemma.
	
	\begin{lemma}\label{iota-n-of-inner-product}
		Fix $f, g \in C_c\left( \R \right)$ and $j, j' \in \{ 0, \dotsc, c_{2n}-1 \}$. Let $r \in \R$ and $k = (j - j')p + \ell c_0 p^{2n+1}$ for any $\ell \in \Z$,
		\begin{align*}
			&\varphi_{n} \left( \inner{f \delta_j}{g \delta_{j'}}_{A_{\beta_{2n}}} \right)(r, k)\\
			&= \sum_{m \in \Z} f\left( c_0 p^{2n+1} r + j a_{2n} + m c_0 p^{2n} \right) \bar{g\left( c_0 p^{2n+1} r + j a_{2n} + m c_0 p^{2n} - [(j-j') + \ell c_0 p^{2n}]\gamma \right)}
		\end{align*}
		and $\iota_{n} \left( \inner{f \delta_j}{g \delta_{j'}}_{A_{\beta_{2n}}} \right)(r, k) = 0$ otherwise.
	\end{lemma}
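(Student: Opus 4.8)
The plan is to first compute the inner product $\inner{f\delta_j}{g\delta_{j'}}_{A_{\beta_{2n}}}$ as an explicit element of $A_{\beta_{2n}} \cong C(\T) \rtimes \Z$, written as a finite sum $\sum_k h_k \delta_k$ of functions on $\T$, and then to apply the connecting map $\varphi_n$ using the frequency-doubling formula of Lemma \ref{rotation-algebra-embedding-formula}. I would begin from Equation \ref{left-inner-product}, instantiated with $c = c_{2n}$, $d = d_{2n}$, and $\gamma$ as in Notation \ref{notation-a-b-c-d}, so that
\[
\inner{f\delta_j}{g\delta_{j'}}_{A_{\beta_{2n}}}(r, k) = \sum_{m \in \Z} (f\delta_j)\!\left(c_{2n}r + m, [d_{2n}m]_{c_{2n}}\right) \overline{(g\delta_{j'})\!\left(c_{2n}r + m - k\gamma, [d_{2n}m - k]_{c_{2n}}\right)}.
\]
The support conditions defining $f\delta_j$ and $g\delta_{j'}$ force $d_{2n}m \equiv j$ and $d_{2n}m - k \equiv j' \pmod{c_{2n}}$, which together pin down the relevant frequencies to those $k$ with $k \equiv j - j' \pmod{c_{2n}}$.

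Next I would exploit Lemma \ref{condition-useful-form}: since $\gcd(c_{2n}, d_{2n}) = 1$ and $a_{2n}d_{2n} - b_{2n}c_{2n} = 1$, we have $d_{2n}^{-1} \equiv a_{2n} \pmod{c_{2n}}$, so the constraint $d_{2n}m \equiv j$ is solved exactly by $m = a_{2n}j + m'c_{2n}$ with $m' \in \Z$. Substituting this parametrization collapses the two delta constraints simultaneously and yields, for $k \equiv j - j' \pmod{c_{2n}}$,
\[
h_k(r) := \inner{f\delta_j}{g\delta_{j'}}_{A_{\beta_{2n}}}(r, k) = \sum_{m' \in \Z} f\!\left(c_{2n}r + a_{2n}j + m'c_{2n}\right) \overline{g\!\left(c_{2n}r + a_{2n}j + m'c_{2n} - k\gamma\right)},
\]
and zero for all other $k$. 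I would note that each such sum is finite because $f, g \in C_c(\R)$, and that the shift $r \mapsto r + 1$ is absorbed by $m' \mapsto m' + 1$, so each $h_k$ is a genuine continuous function on $\T$; this is precisely the point that certifies the expression as a legitimate element of $A_{\beta_{2n}}$.

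Finally I would apply $\varphi_n$ termwise. By Lemma \ref{rotation-algebra-embedding-formula} the map sends $h_k \delta_k$ to a function supported at frequency $pk$ with value $h_k(pr)$ there. Hence $\varphi_n\!\left(\inner{f\delta_j}{g\delta_{j'}}_{A_{\beta_{2n}}}\right)(r, k)$ is nonzero precisely when $k = pk'$ for some $k'$ with $k' \equiv j - j' \pmod{c_{2n}}$, that is, when $k \equiv p(j - j') \pmod{pc_{2n}}$; using $pc_{2n} = c_0 p^{2n+1}$ this is exactly $k = (j-j')p + \ell c_0 p^{2n+1}$ for some $\ell \in \Z$, with $k' = (j-j') + \ell c_0 p^{2n}$. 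Substituting $c_{2n} = c_0 p^{2n}$ and replacing $r$ by $pr$ inside $h_{k'}$ turns $c_{2n}(pr)$ into $c_0 p^{2n+1} r$ and $m' c_{2n}$ into $m' c_0 p^{2n}$, recovering the claimed formula verbatim.

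The main obstacle I anticipate is the bookkeeping in the middle step: simultaneously resolving the two modular support conditions and choosing the parametrization of $m$ so that the double sum cleanly reorganizes into $\sum_k h_k \delta_k$, all while keeping straight that $r$ is a circle coordinate (defined only modulo $1$) rather than a free real variable. Once the inner product is in the form $\sum_k h_k \delta_k$, the application of $\varphi_n$ is routine, amounting to the frequency-doubling and argument-rescaling already packaged in Lemma \ref{rotation-algebra-embedding-formula}.
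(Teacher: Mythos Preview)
Your proposal is correct and follows essentially the same approach as the paper: compute $\inner{f\delta_j}{g\delta_{j'}}_{A_{\beta_{2n}}}$ from Equation~\eqref{left-inner-product}, use $a_{2n}d_{2n}\equiv 1 \pmod{c_{2n}}$ to solve the congruence $d_{2n}m\equiv j$ by $m = a_{2n}j + m'c_{2n}$, read off the support in $k$ as $k\equiv j-j'\pmod{c_{2n}}$, and then apply Lemma~\ref{rotation-algebra-embedding-formula} to rescale $r\mapsto pr$ and shift the frequency support to multiples of $p$. Your added remarks on finiteness of the sum and periodicity in $r$ are correct and make explicit a point the paper leaves implicit.
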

	
	\begin{proof}
		We begin by computing $\inner{f \delta_j}{g \delta_{j'}}_{A_{\beta_{2n}}}$ for $f, g \in C_c(\R)$ and $j, j' \in \{ 0, \dotsc, c_{2n} -1 \}$.
		For any $(r, k) \in \R \times \Z$, it follow from Equation \ref{left-inner-product} that
		\begin{align*}
			\inner{f \delta_j}{g \delta_{j'}}_{A_{\beta_{2n}}} (r, k) = \sum_{m \in \Z} f\delta_j \left( c_0 p^{2n} r + m, [d_{2n} m]_{c_0 p^{2n}}  \right) \bar{g\delta_{j'}\left( c_0 p^{2n} r + m - k \gamma, [d_{2n}m - k]_{c_0 p^{2n}} \right)}
		\end{align*}
		First, the summand associated to $m$ is only nonzero when $d_{2n} m \equiv j \mod c_0p^{2n}$.
		Since $a_{2n}d_{2n} \equiv 1 \mod c_0 p^{2n}$, $d_{2n} m \equiv j \mod c_0p^{2n}$ if and only if $m \equiv j a_{2n} \mod c_0 p^{2n}$, so 
		\begin{align*}
			\begin{split}
				\inner{f \delta_j}{g \delta_{j'}}_{A_{\beta_{2n}}} (r, k) &= \sum_{m \in \Z} f \left( c_0 p^{2n} r + j a_{2n} + m c_0 p^{2n} \right) \\
				&\times \bar{g\delta_{j'}\left( c_0 p^{2n} r + j a_{2n} + m c_0 p^{2n} - k \gamma, [j - k]_{c_0 p^{2n}} \right)}.
			\end{split}
		\end{align*}
		
		Since $g \delta_{j'}$ is only nonzero when $j - k \equiv j' \mod c_0p^{2n}$, $\inner{f \delta_j}{g \delta_{j'}}_{A_{\beta_{2n}}} (r, k)$ is only nonzero when $k = (j - j') + \ell c_0 p^{2n}$ for $\ell \in \Z$, in which case 
		\begin{align*}
			\begin{split}
				\inner{f \delta_j}{g \delta_{j'}}_{A_{\beta_{2n}}} (r, k) &= \sum_{m \in \Z} f \left( c_0 p^{2n} r + j a_{2n} + m c_0 p^{2n} \right)\\
				&\times \bar{g \left( c_0 p^{2n} r + j a_{2n} + m c_0 p^{2n} - \left[ (j - j') + \ell c_0 p^{2n} \right] \gamma \right)}, 
			\end{split}
		\end{align*}
		and $\inner{f \delta_j}{g \delta_{j'}}_{A_{\beta_{2n}}} (r, k) = 0$ otherwise.
		The formula for $\varphi_{n} \left( \inner{f \delta_j}{g \delta_{j'}}_{A_{\beta_{2n}}} \right)(r, k)$ then follows from the embedding of $A_{\beta_{2n}}$ into $A_{\beta_{2n+2}}$ characterized in Lemma \ref{rotation-algebra-embedding-formula}.
	\end{proof}
	
	\begin{proof}[Proof of Lemma \ref{lemma-iota-preserves-inner-product}]
		We now compute $\inner{\iota_{n}\left( f \delta_j \right) }{ \iota_{n}\left( g \delta_{j'} \right)}_{A_{\beta_{2n+2}}}$. For a fixed pair of $j$ and $j'$ in $\{ 0, \dotsc, c_{2n} - 1 \}$ and $[m] \in \Z_{c_{2n + 2}}$, we have 
		$$
		\iota_{n}(f \delta_j)(t, [m]) = \dfrac{1}{\sqrt{p}} \sum_{i = 0}^{p-1} f\left( \dfrac{t}{p} \right) \delta_{jp + i c_0 p^{2n+1}}([m]),
		$$
		and 
		$$
		\iota_{n}(g \delta_{j'})(t, [m]) = \dfrac{1}{\sqrt{p}} \sum_{i' = 0}^{p-1} g\left( \dfrac{t}{p} \right) \delta_{j'p + i' c_0 p^{2n+1}}([m]).
		$$
		
		For $r \in \R$ and $k \in \Z$, we compute 
		\begin{align*}
			\langle \iota_{n}\left( f \delta_j \right), &\iota_{n}\left( g \delta_{j'} \right)\rangle_{A_{\beta_{2n+2}}} (r,k)\\
			&= \bigg\langle \dfrac{1}{\sqrt{p}} \sum_{i = 0}^{p-1} f\left(\frac{r}{p} \right) \delta_{jp + i c_0 p^{2n+1}}, \dfrac{1}{\sqrt{p}} \sum_{i' = 0}^{p-1} g\left(\frac{r}{p} \right) \delta_{j'p + i' c_0 p^{2n+1}} \bigg\rangle_{A_{\beta_{2n+2}}}(r,k)\\ 
			&= \dfrac{1}{p} \sum_{i = 0}^{p-1} \sum_{i' = 0}^{p-1} \big\langle f\left( \frac{r}{p} \right) \delta_{jp + i c_0 p^{2n+1}}, g\left( \frac{r}{p} \right) \delta_{j'p + i' c_0 p^{2n+1}} \big\rangle_{A_{\beta_{2n+2}}}(r, k).
		\end{align*}
		
		We focus on the inner product involving $i$ and $i'$ and compute
		\begin{align*}
			\big\langle f\left( \frac{r}{p} \right) \delta_{jp + i c_0 p^{2n+1}}, &g\left( \frac{r}{p} \right) \delta_{j'p + i' c_0 p^{2n+1}}\big\rangle_{A_{\beta_{2n+2}}}(r, k)\\ 
			= &\sum_{m' \in \Z} f \left( \dfrac{c_0 p^{2n+2} r + (jp + i c_0 p^{2n+1})a_{2n+2} + m' c_0 p^{2n+2}}{p} \right)\\
			&\times \bar{g\left( \dfrac{ c_0 p^{2n+2} r + (jp + i c_0 p^{2n+1})a_{2n+2} + m' c_0 p^{2n+2} - k \gamma}{p} \right)}\\
			= &\sum_{m' \in \Z} f \left( c_0 p^{2n+1} r + (j + i c_0 p^{2n})a_{2n+2} + m' c_0 p^{2n+1} \right)\\
			&\hspace{-1in}\times \bar{g\left( c_0 p^{2n+1} r + (j + i c_0 p^{2n})a_{2n+2} + m' c_0 p^{2n+1} - [ (j-j') + (i - i') c_0 p^{2n} + \ell c_0 p^{2n+1}] \gamma \right)}
		\end{align*}
		when $k = (j - j')p + (i - i') c_0 p^{2n+1} + \ell c_0 p^{2n+2} = (j-j')p + [(i-i') + \ell p] c_0 p^{2n+1}$ for $\ell \in \Z$, and the inner product evaluates to zero otherwise.
		
		Since $i$ and $i'$ are both integers between $0$ and $p-1$, $(i-i') + \ell p$ must be $K + \ell p$ for some $K \in \{ 0, \dotsc, p-1 \}$.
		Specifically, there are $p - K$ pairs of $(i, i')$, with $i \geq i'$, such that $(i-i') + \ell p = K + \ell p$, and $K$ pairs of $(i, i')$, with $i < i'$, such that $(i - i') + (\ell + 1)p = K + \ell p$.
		Together, there are exactly $p$ nonzero inner products contributing to the value of $\inner{\iota_{n}(f\delta_j)}{ \iota_{n}(g \delta_{j'})}_{A_{\beta_{2n+2}}} (r, k)$ for $k =(j-j')p + [K + \ell p] c_0 p^{2n+1}$.
		Therefore, for each fixed $i$, there are $p$ nonzero inner products, each with value 
		
		\begin{align*}
			&\sum_{m' \in \Z} f \left( c_0 p^{2n+1} r + (j + i c_0 p^{2n})a_{2n+2} + m' c_0 p^{2n+1} \right)\\
			&\hspace{2cm} \times \bar{g\left( c_0 p^{2n+1} r + (j + i c_0 p^{2n})a_{2n+2} + m' c_0 p^{2n+1} - [ (j-j') + \left[ K + \ell p \right] c_0 p^{2n}] \gamma \right)}.
		\end{align*}
		
		Therefore, 
		
		\begin{align*}
			&\inner{\iota_{n}\left( f \delta_j \right)}{\iota_{n}\left( g \delta_{j'} \right)}_{A_{\beta_{2n+2}}} (r, (j-j')p + \left[ K + \ell p \right] c_0 p^{2n+1} )\\
			= &\dfrac{1}{p} \cdot p \sum_{i = 0}^{p-1} \Big( \sum_{m' \in \Z} f \left( c_0 p^{2n+1} r + (j + i c_0 p^{2n})a_{2n+2} + m' c_0 p^{2n+1} \right)\\
			&\times \bar{g\left( c_0 p^{2n+1} r + (j + i c_0 p^{2n})a_{2n+2} + m' c_0 p^{2n+1} - [ (j-j') + \left[ K + \ell p \right] c_0 p^{2n}] \gamma \right)} \Big)\\ 
			= &\sum_{i = 0}^{p-1} \Big( \sum_{m' \in \Z} f \left( c_0 p^{2n+1} r + j a_{2n+2} + \left( i a_{2n+2}  + m' p \right) c_0 p^{2n} \right)\\
			&\times \bar{g\left( c_0 p^{2n+1} r +  j a_{2n+2} + \left( i a_{2n+2}  + m' p \right) c_0 p^{2n} - [ (j-j') + \left[ K + \ell p \right] c_0 p^{2n}] \gamma \right)} \Big)
		\end{align*}

		Since $a_{2n+2}$ is relatively prime to $p$, as it is relatively prime to $c_{2n} = c_0p^{2n}$ and $i a_{2n+2}$ goes through the modular classes of $p$ exactly once as $i$ goes from $0$ to $p-1$.
		Therefore,
		\[
		\bigcup_{i = 0}^{p -1} \bigcup_{m' \in \Z} i a_{2n+2} + m'p = \Z,
		\]
		so 
		\begin{align*}
			&\inner{\iota_{n}\left( f \delta_j \right)}{\iota_{n}\left( g \delta_{j'} \right)}_{A_{\beta_{2n+2}}} (r, (j-j')p + \left[ K + \ell p \right] c_0 p^{2n+1} )\\
			= &\sum_{m \in \Z} f \left( c_0 p^{2n+1} r + j a_{2n+2} + m c_0 p^{2n} \right)\\
			&\times \bar{g\left( c_0 p^{2n+1} r +  j a_{2n+2} + m c_0 p^{2n} - [ (j-j') + \left[ K + \ell p \right] c_0 p^{2n}] \gamma \right)}.
		\end{align*}
		
		We now need to relate $a_{2n+2}$ and $a_{2n}$.
		In the proof of Proposition \ref{construction-beta}, we showed that 
		\begin{equation*}
			\dfrac{a_{2n+2} - a_{2n}}{c_{2n}} = z \in \Z,
		\end{equation*}
		so $a_{2n+2} = a_{2n} + z c_0 p^{2n}$ and we have 
		\begin{align*}
			&\inner{\iota_{n}\left( f \delta_j \right)}{\iota_{n}\left( g \delta_{j'} \right)}_{A_{\beta_{2n+2}}} (r, (j-j')p + \left[ K + \ell p \right] c_0 p^{2n+1} )\\
			= &\sum_{m \in \Z} f \left( c_0 p^{2n+1} r + j \left( a_{2n} + z c_0 p^{2n} \right) + m c_0 p^{2n} \right)\\
			&\times \bar{g\left( c_0 p^{2n+1} r +  j \left( a_{2n} + z c_0 p^{2n} \right) + m c_0 p^{2n} - [ (j-j') + \left[ K + \ell p \right] c_0 p^{2n}] \gamma \right)}\\
			= &\sum_{m \in \Z} f \left( c_0 p^{2n+1} r + j a_{2n} + \left( j z + m \right) c_0 p^{2n} \right)\\
			&\times \bar{g\left( c_0 p^{2n+1} r +  j a_{2n} + \left( j z + m \right) c_0 p^{2n} - [ (j-j') + \left[ K + \ell p \right] c_0 p^{2n}] \gamma \right)}\\
			= &\sum_{m \in \Z} f \left( c_0 p^{2n+1} r + j a_{2n} + m c_0 p^{2n} \right)\\
			&\times \bar{g\left( c_0 p^{2n+1} r +  j a_{2n} + m c_0 p^{2n} - [ (j-j') + \left[ K + \ell p \right] c_0 p^{2n}] \gamma \right)}
		\end{align*}
		
		Since $j$ and $z$ are fixed, $jz + m$ goes through the integers exactly once as $m$ runs through $\Z$, so the last equality follows.
		
		Lastly, as $K$ goes from $0$ to $p-1$, and $\ell$ goes over all integers, the sum $K + \ell p$ will go over the integers exactly once, so we have that, for $k = (j - j')p + \ell c_0 p^{2n+1}$,
		\begin{align*}
			&\inner{\iota_{n}\left( f \delta_j \right)}{\iota_{n}\left( g \delta_{j'} \right)}_{A_{\beta_{2n+2}}} (r, k)\\
			&= \sum_{m \in \Z} f \left( c_0 p^{2n+1} r +  j a_{2n} + m c_0 p^{2n} \right) \bar{g\left(c_0 p^{2n+1} r +  j a_{2n} + m c_0 p^{2n} - [ (j-j') + \ell c_0 p^{2n} ] \gamma \right)} 
		\end{align*}
		and $\inner{\iota_{n}\left( f \delta_j \right) }{ \iota_{n}\left( g \delta_{j'} \right)}_{A_{\beta_{2n+2}}} (r, k) = 0$ otherwise, which is exactly $\varphi_{n} \left( \inner{f\delta_j}{g\delta_{j'}}_{A_{\beta_{2n}}} \right)(r, k)$ in Lemma \ref{iota-n-of-inner-product}.
		We conclude that
		\[
		\varphi_{n} \left( \inner{f \delta_j}{g \delta_{j'}}_{A_{\beta_{2n}}} \right) = \inner{\iota_{n}\left( f \delta_j \right)}{\iota_{n}\left( g \delta_{j'} \right)}_{A_{\beta_{2n+2}}}.
		\]
	\end{proof}
	
	The main result of this section is the following.
	\begin{theorem}\label{main-theorem}
		Fix prime $p$ and let $\alpha = \left( \alpha_n \right)_{n \in \N} \in \Xi_p$ be an irrational sequence with $p \alpha_{n+1} = \alpha_n + x_n$ for all $n \in \N$.
		Let $P$ be a projection in $M_m\left( A_{\alpha_0} \right)$, $m \geq 1$, that satisfies Condition \ref{the-condition}.
		Then $\left( X_{2n}, \iota_n \right)_{n \in \N}$ is a directed system of equivalence bimodules adapted to the sequence $\left( P M_m\left( A_{\alpha_{2n}} \right)P, \varphi_n \right)_{n \in \N} = \left( A_{\beta_{2n}}, \varphi_n \right)_{n \in \N}$ and $\left( A_{\alpha_{2n}}, \psi_n \right)_{n \in \N}$.
	\end{theorem}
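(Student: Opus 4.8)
The plan is to verify, one at a time, the four defining identities of Definition \ref{directed-system} for the directed systems $(A_{\beta_{2n}}, \varphi_n)$ and $(A_{\alpha_{2n}}, \psi_n)$ together with the maps $\iota_n$ of Definition \ref{define-bimodule-embedding}. I would first record the structural hypotheses that come for free: each $X_{2n}$ is an $A_{\beta_{2n}}$-$A_{\alpha_{2n}}$-equivalence bimodule by Proposition \ref{construction-beta}, the connecting maps $\varphi_n$ and $\psi_n$ are unital, and each $\iota_n$ is a well-defined linear map $X_{2n} \to X_{2n+2}$. Three of the four compatibility equations are then already in hand: Lemma \ref{iota-preserves-left-action} is exactly Equation \ref{preserve-left-action}, Lemma \ref{iota-preserves-right-action} is Equation \ref{preserve-right-action}, and Lemma \ref{lemma-iota-preserves-inner-product} is Equation \ref{preserve-left-inner-product}. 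The only substantive content left is Equation \ref{preserve-right-inner-product}, the statement that $\iota_n$ intertwines the right $A_{\alpha_{2n}}$-valued inner products:
\[
\inner{\iota_n(F_1)}{\iota_n(F_2)}_{A_{\alpha_{2n+2}}} = \psi_n\!\left( \inner{F_1}{F_2}_{A_{\alpha_{2n}}} \right).
\]

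My primary route for this last identity is a direct computation mirroring the proof of Lemma \ref{lemma-iota-preserves-inner-product}, now starting from the right-inner-product formula \ref{right-inner-product} in place of \ref{left-inner-product}. Concretely, I would evaluate $\inner{\iota_n(f\delta_j)}{\iota_n(g\delta_{j'})}_{A_{\alpha_{2n+2}}}(r,k)$ on the spanning functions $f\delta_j, g\delta_{j'}$, expand each $\iota_n$ via Definition \ref{define-bimodule-embedding}, and compare the result against $\psi_n\big(\inner{f\delta_j}{g\delta_{j'}}_{A_{\alpha_{2n}}}\big)$ computed through the embedding formula of Lemma \ref{rotation-algebra-embedding-formula}. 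As in Lemma \ref{lemma-iota-preserves-inner-product}, the $1/\sqrt{p}$ normalization produces a factor $1/p$, and the decisive combinatorial point is that exactly $p$ of the $p^2$ cross terms indexed by $(i,i')$ survive for each admissible $k$, cancelling that $1/p$. The bookkeeping relies on $a_{2n+2}$ being coprime to $p$, on the relation $d_{2n+2} = d_{2n} - c_0 x_{2n} p^{2n} - x_{2n+1} p^{2n+1}$ already used in Lemma \ref{iota-preserves-right-action}, and on $p\,c_{2n} = c_0 p^{2n+1} = c_{2n+2}/p$ to match modular classes in $\Z_{c_{2n+2}}$ against those in $\Z_{c_{2n}}$. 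I expect the main obstacle here to be purely organizational: keeping the reindexing of the double sum straight so that the $p$ surviving terms reassemble into the single sum defining $\psi_n\big(\inner{f\delta_j}{g\delta_{j'}}_{A_{\alpha_{2n}}}\big)$.

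An appealing alternative is to deduce Equation \ref{preserve-right-inner-product} abstractly from the three identities already proved. Applying $\iota_n$ to the imprimitivity relation $\inner{F_1}{F_2}_{A_{\beta_{2n}}}\cdot F_3 = F_1\cdot \inner{F_2}{F_3}_{A_{\alpha_{2n}}}$, then using Lemmas \ref{iota-preserves-left-action}, \ref{iota-preserves-right-action}, \ref{lemma-iota-preserves-inner-product} and the imprimitivity relation inside $X_{2n+2}$, yields
\[
\iota_n(F_1)\cdot \inner{\iota_n(F_2)}{\iota_n(F_3)}_{A_{\alpha_{2n+2}}} = \iota_n(F_1)\cdot \psi_n\!\left(\inner{F_2}{F_3}_{A_{\alpha_{2n}}}\right)
\]
for every $F_1 \in X_{2n}$. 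The difficulty, and the reason I would present the direct computation as the actual proof, lies in the final cancellation of $\iota_n(F_1)$: the image $\iota_n(X_{2n})$ is \emph{not} full in $X_{2n+2}$, so nondegeneracy of the right action does not apply directly. Closing this route requires showing that the right inner products $\inner{\iota_n(F_1)}{\iota_n(F_1')}_{A_{\alpha_{2n+2}}}$ densely span a subalgebra of $A_{\alpha_{2n+2}}$ containing the unit — morally $\psi_n(A_{\alpha_{2n}})$, which is unital since $\psi_n$ is unital — after which $\inner{\iota_n F_1}{\iota_n F_1'}\cdot e = 0$ forces the discrepancy $e$ to vanish. Since identifying that span seems to need essentially the same computation as above, the two approaches are intertwined, and I would cite the imprimitivity identity only as motivation.
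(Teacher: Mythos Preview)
Your proposal is correct, and your direct computation for Equation~\ref{preserve-right-inner-product} would certainly work. What is worth noting is that the paper actually takes your \emph{alternative} route as its sole argument: it derives
\[
\iota_n(F)\cdot \langle \iota_n(G), \iota_n(H)\rangle_{A_{\alpha_{2n+2}}} = \iota_n(F)\cdot \psi_n\bigl(\langle G, H\rangle_{A_{\alpha_{2n}}}\bigr)
\]
from the imprimitivity relation together with Lemmas~\ref{iota-preserves-left-action}, \ref{iota-preserves-right-action}, and \ref{lemma-iota-preserves-inner-product}, exactly as you outline. The cancellation step you flag is handled not by showing the span contains the unit, but by first checking the weaker inclusion $\langle \iota_n(X_{2n}), \iota_n(X_{2n})\rangle_{A_{\alpha_{2n+2}}} \subseteq \psi_n(A_{\alpha_{2n}})$, then regarding $\iota_n(X_{2n})$ as an $A_{\beta_{2n}}$--$A_{\alpha_{2n}}$-equivalence bimodule over the embedded subalgebras, and finally invoking the isomorphism $A_{\alpha_{2n}} \cong \mathcal{K}(\iota_n(X_{2n}))$ from \cite[Proposition~3.8]{RW98} to conclude that the right action is faithful. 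So your instinct that closing the abstract route requires some residual computation is right, but the paper trades the full matching of formulas for a one-sided inclusion plus a structural faithfulness argument; your direct approach is more self-contained but heavier in bookkeeping.
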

	
	\begin{proof}
		We showed in Proposition \ref{construction-beta} that $X_{2n}$ is an $A_{\beta_{2n}}$-$A_{\alpha_{2n}}$-equivalence bimodule for each $n$.
		By Lemma \ref{iota-preserves-left-action} and \ref{lemma-iota-preserves-inner-product}, each embedding preserves left module action and left inner product, that is, 
		\[
		\iota_{n} \left( f \cdot F \right) = \varphi_n\left( f \right) \cdot \iota_n \left( F \right), \text{ and } \varphi_n\left( \inner{F_1}{F_2}_{A_{\beta_{2n}}} \right) = \inner{\iota_n\left( F_1 \right)}{\iota_n\left( F_2 \right)}_{A_{\beta_{2n + 2}}}, 
		\]
		for $f \in A_{\beta_{2n}}$ and $F$, $F_1$, and $F_2$ in $X_{2n}$.
		Moreover, Lemma \ref{iota-preserves-right-action} shows that each embedding respects the right modules action.
		
		It only remains to show that $\psi_n(\langle F_1, F_2 \rangle_{A_{\alpha_{2n}}})=  \langle \iota_n(F_1), \iota_n(F_2) \rangle_{A_{\alpha_{2n+2}}}$ for all $F_1, F_2 \in X_{2n}$.
		First, observe that for any $F, G, H \in X_{2n}$, we have 
		\[
		\langle F, G \rangle_{A_{\beta_{2n}}} \cdot H = F \cdot \langle G, H \rangle_{A_{\alpha_{2n}}},
		\]
		so
		\[
		\iota_n\left( \langle F, G \rangle_{A_{\beta_{2n}}} \cdot H \right) = \iota_n \left( F \cdot \langle G, H \rangle_{A_{\alpha_{2n}}} \right),
		\]
		or 
		\[
		\varphi_n \left( \langle F, G \rangle_{A_{\beta_{2n}}} \right) \cdot \iota_n\left( H \right) = \iota_n \left( F \right) \cdot \psi_n \left( \langle G, H \rangle_{A_{\alpha_{2n}}} \right).
		\]
		By Lemma \ref{lemma-iota-preserves-inner-product}, the left side of the equality is equal to $\langle \iota_n\left( F \right), \iota_n\left( G \right) \rangle_{A_{\beta_{2n+2}}} \cdot \iota_n\left( H \right)$, which is equal to $\iota_n\left( F \right) \cdot \langle \iota_n\left( G \right), \iota_n\left( H \right) \rangle_{A_{\alpha_{2n+2}}}$.
		Therefore, for any fixed $G, H \in X_{2n}$,
		\[
		\iota_n\left( F \right) \cdot \langle \iota_n\left( G \right), \iota_n\left( H \right) \rangle_{A_{\alpha_{2n+2}}} = \iota_n \left( F \right) \cdot \psi_n \left( \langle G, H \rangle_{A_{\alpha_{2n}}} \right)
		\]
		for all $F \in X_{2n}$.
		This is sufficient for us to conclude that
		\[
		\langle \iota_n\left( G \right), \iota_n\left( H \right) \rangle_{A_{\alpha_{2n+2}}} = \psi_n \left( \langle G, H \rangle_{A_{\alpha_{2n}}} \right)
		\]
		for all $G, H \in X_{2n}$ as follows.
		We have shown in Lemma \ref{lemma-iota-preserves-inner-product} that $\langle \iota_n \left( X_{2n} \right), \iota_n \left( X_{2n} \right) \rangle_{A_{\beta_{2n + 2}}} = \psi_n\left(A_{\beta_{2n}} \right)$.
		One checks that $\langle \iota_n \left( X_{2n} \right), \iota_n \left( X_{2n} \right) \rangle_{A_{\alpha_{2n + 2}}} \subseteq \psi_n\left(A_{\alpha_{2n}} \right)$.
		Therefore, we can regard $\iota_n\left( X_{2n} \right)$ as a $A_{\beta_{2n}}$-$A_{\alpha_{2n}}$-equivalence bimodule.
		By \cite[Proposition 3.8]{RW98}, the map $\phi : A_{\alpha_{2n}} \rightarrow \mathcal{K}\left( \iota_n \left( X_{2n} \right) \right)$ from $A_{\alpha_{2n}}$ to the compact operators on $\iota_n\left(X_{2n} \right)$ given by $a \mapsto \phi_a(x) = x \cdot a$ is an isomorphism of C*-algebras.
		Let $a = \langle \iota_n\left( G \right), \iota_n\left( H \right) \rangle_{A_{\alpha_{2n+2}}}$ and $a' = \psi_n \left( \langle G, H \rangle_{A_{\alpha_{2n}}} \right)$, we have $\phi_a = \phi_{a'}$ on all of $\iota_n\left( X_{2n} \right)$, which implies that $a = a'$, as we desired to show.
	\end{proof}	
	
	\begin{corollary}
		Let $\A_{\alpha}^\S = \varinjlim A_{\alpha_{2n}}$ and $\A_{\beta}^\S = \varinjlim A_{\beta_{2n}}$ be given as in Theorem \ref{main-theorem}.
		Then $\A_{\alpha}^\S$ and $\A_{\beta}^\S$ are Morita equivalent. 
	\end{corollary}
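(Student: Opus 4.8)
The plan is to deduce the corollary directly from Theorem \ref{main-theorem} together with the general machinery of \cite[Theorem 3.4]{LP3}, which was recalled just after Definition \ref{directed-system}. By Theorem \ref{main-theorem}, the family $\left( X_{2n}, \iota_n \right)_{n \in \N}$ is a directed system of equivalence bimodules adapted to the two directed systems of unital C*-algebras $\left( A_{\beta_{2n}}, \varphi_n \right)_{n \in \N}$ and $\left( A_{\alpha_{2n}}, \psi_n \right)_{n \in \N}$, with all connecting $\ast$-morphisms unital. Since both directed systems have direct limits that are precisely the noncommutative solenoids $\A_\beta^\S = \varinjlim A_{\beta_{2n}}$ and $\A_\alpha^\S = \varinjlim A_{\alpha_{2n}}$ (via Theorem \ref{direct-limit-realization} and Proposition \ref{construction-beta}), the hypotheses of the Latr\'emoli\`ere--Packer theorem are met.

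Concretely, I would first invoke Theorem \ref{main-theorem} to record that all four compatibility conditions of Definition \ref{directed-system}, namely Equations \ref{preserve-right-inner-product} through \ref{preserve-left-action}, hold for the system $\left( X_{2n}, \iota_n \right)_{n \in \N}$. Then I would apply \cite[Theorem 3.4]{LP3}, which asserts that any such directed system of equivalence bimodules admits a direct limit $X = \varinjlim \left( X_{2n}, \iota_n \right)$ carrying a natural $\A_\beta^\S$-$\A_\alpha^\S$-equivalence bimodule structure: the left and right actions and the two inner products descend to the inductive limit precisely because the embeddings $\iota_n$ intertwine them with the connecting maps. This produces an imprimitivity bimodule implementing a (strong) Morita equivalence between $\A_\beta^\S$ and $\A_\alpha^\S$, which is symmetric, giving the stated equivalence of $\A_\alpha^\S$ and $\A_\beta^\S$.

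I do not expect any genuine obstacle at this final stage, since all of the analytic and algebraic work has already been carried out: the nontrivial content is entirely contained in verifying the four compatibility equations, which Lemmas \ref{iota-preserves-left-action}, \ref{iota-preserves-right-action}, and \ref{lemma-iota-preserves-inner-product} and the completion argument at the end of Theorem \ref{main-theorem} supply. The only point warranting a sentence of care is confirming that the direct limits of the C*-algebra systems appearing in Theorem \ref{main-theorem} really are the intended solenoids $\A_\alpha^\S$ and $\A_\beta^\S$; this is immediate from the identifications $P M_m\left( A_{\alpha_{2n}} \right) P \cong A_{\beta_{2n}}$ and the fact, established in Proposition \ref{construction-beta}, that the $\left( A_{\beta_{2n}} \right)_{n \in \N}$ with the prescribed connecting maps form a noncommutative solenoid. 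Thus the corollary follows by a direct citation of \cite[Theorem 3.4]{LP3} applied to the directed system furnished by Theorem \ref{main-theorem}.
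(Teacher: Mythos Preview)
Your proposal is correct and follows exactly the paper's approach: the corollary is deduced by applying \cite[Theorem 3.4]{LP3} to the directed system of equivalence bimodules $\left( X_{2n}, \iota_n \right)_{n \in \N}$ supplied by Theorem \ref{main-theorem}. The paper's own proof is a single sentence to this effect, so your expanded version simply spells out what that citation entails.
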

	
	\begin{proof}
		This follows directly from \cite[Theorem 3.4]{LP3} by taking the direct limit of $\left(X_{2n}, \iota_n \right)_{n \in \N}$.
	\end{proof}

	\subsection{Relating back to the Heisenberg bimodules.} We now state how the two constructions of equivalence bimodules between noncommutative solenoids are related.
	Fix a noncommutative solenoid $\A_\alpha^\S$ with $\alpha = \left( \alpha_n \right)_{n \in \N} \in \Xi_p$ and $p \alpha_{n+1} = \alpha_{n} + x_n$, $x_n \in \{ 0, \dots, p-1 \}$ for all $n$. Assume also that $\alpha_0 \neq 0$ and $x_0 \neq 0$, so the associated $p$-adic integer $x_\alpha = \sum_{j = 0}^\infty x_j p^j$ has inverse $x_\alpha^{-1} = \sum_{j = 0}^\infty y_j p^j \in \Z_p$.
	
	For such a fixed $\A_\alpha^\S$, the Morita equivalent solenoid $\A_\beta^
	\S$ that arises from the Heisenberg bimodules construction (see Theorem \ref{beta-formula-nonzero}) is given by
	\[
	\beta = \left( \dfrac{1}{\alpha_0 p^n} + \dfrac{\sum_{j = 0}^{n-1} y_j p^j}{p^n} \right)_{n \in \N}.
	\]
	
	Following Remark \ref{condition-special-case} and Notation \ref{nc-solenoid-notation}, the projection $P \in A_{\alpha_0}$ with $\tau_{\alpha_0}\left(P \right) = \alpha_0$ satisfies Condition \ref{the-condition}, so by Proposition \ref{construction-beta}, the noncommutative solenoid $\A_\beta^\S = \varinjlim P A_{\alpha_{2n}} P$ is determined by
	\[
	\beta_{2n} = \beta_{2n} = \dfrac{a_{2n} \alpha_{2n} + b_{2n}}{c_{2n} \alpha_{2n} +d_{2n} },
	\]
	with
	\[
	a_{2n} = \sum_{j=0}^{2n-1} y_j p^j,\ b_{2n} = \left( \left(  \sum_{ j = 0}^{2n-1} y_j p^j  \right)\left( - \sum_{ j = 0}^{2n-1} x_j p^j \right) + 1 \right) p^{-2n},
	\]
	\[
	c_{2n} = p^{2n},\ \text{and } d_{2n} =  - \sum_{j=0}^{2n-1} x_j p^j.
	\]
	By Lemma \ref{inverse-multiply-one}, $b_2n$ is an integer and one checks that $a_{2n}d_{2n} - b_{2n}c_{2n} = 1$ for all $n$.
	Observe that, since $p^n \alpha_n = \alpha_0 + \sum_{j = 0}^{n-1} x_j p^j$, 
	\begin{align*}
		\beta_{2n} &= \dfrac{a_{2n} \alpha_{2n} + b_{2n}  }{ p^{2n}\alpha_{2n} + \left( - \sum_{j=0}^{2n-1} x_j p^j \right) }\\
		&= \dfrac{ \left( \sum_{ j = 0}^{2n-1} y_j p^j \right) \alpha_{2n} + \left( \left(  \sum_{j = 0}^{2n-1} y_j p^j  \right)\left( - \sum_{ j = 0}^{2n-1} x_j p^j \right) + 1 \right) p^{-2n}  }{ p^{2n}\alpha_{2n} + \left( - \sum_{j=0}^{2n-1} x_j p^j\right) }\\
		&= \dfrac{ \left( \sum_{ j = 0}^{2n-1} y_j p^j  \right)\alpha_{2n}p^{2n} + \left( \left(  \sum_{j = 0}^{2n-1} y_j p^j  \right)\left( - \sum_{ j = 0}^{2n-1} x_j p^j \right) + 1 \right) }{\alpha_{0}p^{2n}}\\
		&= \dfrac{ \left( \sum_{ j = 0}^{2n-1} y_j p^j \right) \left( \alpha_{0} + \sum_{j = 0}^{2n-1} x_j p^j \right) + \left( \left(  \sum_{j = 0}^{2n-1} y_j p^j  \right)\left( - \sum_{ j = 0}^{2n-1} x_j p^j \right) + 1 \right) }{\alpha_{0}p^{2n}}\\
		&= \dfrac{ \alpha_{0} \left( \sum_{ j = 0}^{2n-1} y_j p^j \right) + 1 }{\alpha_{0}p^{2n}}\\
		&= \dfrac{1}{\alpha_{0} p^{2n}} + \dfrac{\sum_{ j = 0}^{2n-1} y_j p^j}{p^{2n}},
	\end{align*}
	which are exactly the even entries of $\beta$ calculated from the Heisenberg bimodules construction.
	By Remark \ref{determine-alpha}, since the two $\beta$'s agree at infinitely many entries, they must determine the same noncommutative solenoid.
	
	We summarize the above discussion as the following result.
	
	\begin{proposition}\label{relate-constructions}
		 Fix a noncommutative solenoid $\A_\alpha^\S$ with $\alpha = \left( \alpha_n \right)_{n \in \N} \in \Xi_p$ and $p \alpha_{n+1} = \alpha_{n} + x_n$, $x_n \in \{ 0, \dots, p-1 \}$ for all $n$. Assume also that $\alpha_0 \neq 0$ and $x_0 \neq 0$.
		 The Morita equivalent solenoid $\A_\beta^\S$ formed from the directed system of equivalence bimodule construction, using projection $P \in A_{\alpha_0}$ with $\tau_{\alpha_0}\left( P \right) = \alpha_0$, is the same as the one from the Heisenberg bimodule construction.
	\end{proposition}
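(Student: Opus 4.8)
The plan is to compute the parameter $\beta$ produced by each of the two constructions independently and then verify that the two resulting sequences agree at infinitely many entries; by Remark~\ref{determine-alpha} this forces them to determine the same noncommutative solenoid. On the Heisenberg side there is nothing to do: since $\alpha_0 \neq 0$ and $x_0 \neq 0$, the associated $p$-adic integer $x_\alpha$ has order $0$, hence is invertible with inverse $x_\alpha^{-1} = \sum_{j=0}^\infty y_j p^j \in \Z_p$, and Theorem~\ref{beta-formula-nonzero} gives directly $\beta_n = \tfrac{1}{\alpha_0 p^n} + \tfrac{1}{p^n}\sum_{j=0}^{n-1} y_j p^j$. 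The work is therefore entirely on the directed-system side.

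For the directed-system construction I would take $P \in A_{\alpha_0}$ with $\tau_{\alpha_0}(P) = \alpha_0$, so that $c_0 = 1$ and $d_0 = 0$ in the parametrization $\tau_{\alpha_0}(P) = c_0 \alpha_0 + d_0$. Because $d_0 = 0$, Remark~\ref{condition-special-case} tells us that Condition~\ref{the-condition} holds precisely because $c_0 = 1$ and $x_0 \neq 0$, so Proposition~\ref{construction-beta} applies. Reading off the formulas there gives $c_{2n} = p^{2n}$ and $d_{2n} = -\sum_{j=0}^{2n-1} x_j p^j$, and it remains to pin down a valid pair $a_{2n}, b_{2n}$. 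Guided by the Heisenberg answer, I would propose
\[
a_{2n} = \sum_{j=0}^{2n-1} y_j p^j, \qquad b_{2n} = p^{-2n}\left( \Big(\sum_{j=0}^{2n-1} y_j p^j\Big)\Big(-\sum_{j=0}^{2n-1} x_j p^j\Big) + 1 \right),
\]
and then check the two properties such a pair must satisfy.

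The main obstacle is exactly this verification. First one must show $b_{2n} \in \Z$, which is where Lemma~\ref{inverse-multiply-one} enters: it yields $(\sum_{j=0}^{2n-1} y_j p^j)(\sum_{j=0}^{2n-1} x_j p^j) \equiv 1 \bmod p^{2n}$, so the numerator of $b_{2n}$ is divisible by $p^{2n}$. Granting this, the unimodularity $a_{2n} d_{2n} - b_{2n} c_{2n} = 1$ is a direct substitution. With $a_{2n}, b_{2n}, c_{2n}, d_{2n}$ in hand I would substitute into $\beta_{2n} = (a_{2n}\alpha_{2n} + b_{2n})/(c_{2n}\alpha_{2n} + d_{2n})$ and simplify, the key algebraic input being the iterate $p^{2n}\alpha_{2n} = \alpha_0 + \sum_{j=0}^{2n-1} x_j p^j$ of the defining relation $p\alpha_{m+1} = \alpha_m + x_m$. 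Clearing denominators and cancelling the cross term $(\sum y_j p^j)(\sum x_j p^j)$ against the $+1$ in $b_{2n}$ collapses the expression to $\beta_{2n} = \tfrac{1}{\alpha_0 p^{2n}} + \tfrac{1}{p^{2n}}\sum_{j=0}^{2n-1} y_j p^j$. These are exactly the even-indexed entries of the Heisenberg $\beta$, so the two sequences agree at infinitely many entries and Remark~\ref{determine-alpha} finishes the proof.
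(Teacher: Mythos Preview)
Your proposal is correct and follows essentially the same route as the paper: both identify $c_0=1$, $d_0=0$, invoke Remark~\ref{condition-special-case} to verify Condition~\ref{the-condition}, choose the same explicit $a_{2n}$ and $b_{2n}$, appeal to Lemma~\ref{inverse-multiply-one} for integrality of $b_{2n}$, and simplify $\beta_{2n}$ via $p^{2n}\alpha_{2n} = \alpha_0 + \sum_{j=0}^{2n-1} x_j p^j$ before concluding with Remark~\ref{determine-alpha}. The structure, the chosen auxiliary results, and the algebra all match.
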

	
	\begin{remark}
		It remains an open question as to whether or not we can extend this result to the $\A_\beta^\S$ given in Theorem \ref{beta-formula-general}, which generalizes Theorem \ref{beta-formula-nonzero} used in the above discussion. 
	\end{remark}

	\section{Morita equivalence of irrational noncommutative solenoids}\label{S-morita-problem}
	
	We are now ready to prove our main result, which addresses the Morita equivalence problem for irrational noncommutative solenoids. We start with the following observation.
	
	\begin{lemma}\label{no-matrix-over-irr-rotation-algebra}
		Let $\A_\alpha^\S$ be an irrational noncommutative solenoid, then it does not contain a unital C*-subalgebra isomorphic to $M_m\left( A_\theta \right)$ for any $m$ strictly greater than $1$.
	\end{lemma}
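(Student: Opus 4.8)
The plan is to argue by contradiction and to reduce the problem to a single irrational rotation algebra, where the unique trace gives an immediate obstruction. Suppose $\A_\alpha^\S$ contains a unital C*-subalgebra isomorphic to $M_m(A_\theta)$ with $m>1$, and fix such an isomorphism $\Phi$. Restricting $\Phi$ to the scalar matrices $1_{A_\theta}\otimes M_m(\C)$, whose unit is $1_{A_\theta}\otimes 1_m = 1_{M_m(A_\theta)}$, I obtain a \emph{unital} embedding of $M_m(\C)$ into $\A_\alpha^\S$; note that this step discards $\theta$ entirely, so no hypothesis on $\theta$ is needed. One might hope to exclude this using the trace of $\A_\alpha^\S$, but that cannot work directly: the range of the trace on projections of $\A_\alpha^\S$ is $(\Z[1/p]+\alpha_0\Z[1/p])\cap[0,1]$, which contains $1/p^s$ for every $s$, so a minimal projection of trace $1/m$ is not by itself excluded. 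The resolution is to push the copy of $M_m(\C)$ down into a finite stage of the direct limit.

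Using Theorem \ref{direct-limit-realization}, write $\A_\alpha^\S = \overline{\bigcup_N A_{\alpha_{2N}}}$, where each $A_{\alpha_{2N}}$ is included unitally and isometrically. Given the matrix units $\{e_{ij}\}_{i,j=1}^m$ of the embedded $M_m(\C)$, I would choose $N$ large and elements $a_{ij}\in A_{\alpha_{2N}}$ with $\|a_{ij}-e_{ij}\|$ as small as desired, so that the $a_{ij}$ form an approximate system of matrix units. Since $M_m(\C)$ is a finite-dimensional, hence semiprojective, C*-algebra, the standard matrix-unit perturbation argument yields genuine matrix units $\{f_{ij}\}\subseteq A_{\alpha_{2N}}$ close to the $a_{ij}$. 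The delicate point is unitality: the projection $\sum_i f_{ii}\in A_{\alpha_{2N}}$ is close to $\sum_i e_{ii}=1_{\A_\alpha^\S}=1_{A_{\alpha_{2N}}}$, and because the inclusion $A_{\alpha_{2N}}\hookrightarrow\A_\alpha^\S$ is isometric one gets $\|\sum_i f_{ii}-1_{A_{\alpha_{2N}}}\|<1$; a projection within distance $1$ of the identity equals the identity, so $\{f_{ij}\}$ is a unital system of matrix units. Thus $A_{\alpha_{2N}}$ itself contains a unital copy of $M_m(\C)$.

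To finish, I would invoke the unique tracial state $\tau_{\alpha_{2N}}$ on the irrational rotation algebra $A_{\alpha_{2N}}$, whose induced map on $K_0$ has range $\Z+\alpha_{2N}\Z$. The minimal projections $f_{11},\dots,f_{mm}$ are mutually equivalent and sum to $1$, so $\tau_{\alpha_{2N}}(f_{ii})=1/m$ for each $i$, forcing $1/m\in\Z+\alpha_{2N}\Z$. As $\alpha$ is an irrational sequence, $\alpha_{2N}$ is irrational, so $1$ and $\alpha_{2N}$ are $\Q$-linearly independent and $(\Z+\alpha_{2N}\Z)\cap\Q=\Z$; hence $1/m\in\Z$ and $m=1$, contradicting $m>1$. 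The main obstacle is the descent step of the second paragraph: the trace of the \emph{limit} algebra genuinely fails to obstruct $M_m(\C)$ (indeed $[1]$ is divisible by $p$ in $K_0(\A_\alpha^\S)\cong\Z[1/p]^2$), so the argument must exploit that the approximating finite-stage algebras carry the much smaller trace range $\Z+\alpha_{2N}\Z$, together with the care needed to preserve unitality when perturbing the matrix units into $A_{\alpha_{2N}}$.
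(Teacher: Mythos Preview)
Your argument is correct, but the detour through a finite stage of the direct limit is unnecessary and rests on a mistaken premise. You assert that the range of the trace on projections of $\A_\alpha^\S$ is $(\Z[1/p]+\alpha_0\Z[1/p])\cap[0,1]$ and hence contains $1/p^s$; this is false. The range of the unique trace $\tau$ on $K_0(\A_\alpha^\S)$ is $\{z+y\alpha_k : z,y\in\Z,\ k\in\N\} = \bigcup_k(\Z+\Z\alpha_k)$, a \emph{proper} subgroup of $\Z[1/p]+\alpha_0\Z[1/p]$. Since each $\alpha_k$ is irrational, an equation $z+y\alpha_k = 1/m$ with $z,y\in\Z$ forces $y=0$ and then $z=1/m\notin\Z$, so $1/m$ is never in the range for $m>1$. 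Equivalently, $[1]$ is \emph{not} $p$-divisible in $K_0(\A_\alpha^\S)$: the connecting maps on $K_0\cong\Z^2$ are injective and carry $[1]$ to $[1]$, so $[1]$ is never a $p$th multiple at any stage.

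The paper's proof exploits exactly this. The restriction of $\tau$ to a unital C*-subalgebra $B\cong M_m(A_\theta)$ is a tracial state, hence the unique one on $M_m(A_\theta)$; the projection $\diag(1,0,\dots,0)\in B\subset\A_\alpha^\S$ then has trace $1/m$, which cannot lie in the range of $\tau$ on $K_0(\A_\alpha^\S)$, and the contradiction is immediate. Your route---pushing the matrix units down to some $A_{\alpha_{2N}}$ via semiprojectivity and then invoking $\tau_{\alpha_{2N}}$ there---is valid and reaches the same contradiction, but the descent step and the attendant care about unitality are superfluous once the trace range on the limit algebra is identified correctly.
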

	
	\begin{proof}
		In \cite{LP2}, Latr{\'e}moli{\`e}re and Packer showed that an irrational noncommutative solenoid is simple and has a unique tracial state, which we denote by $\tau$.
		Moreover, the range of this trace on $K_0\left( \A_\alpha^\S \right)$ is given by $\{ z + y \alpha_k \:: z, y \in \Z, k \in \N \}$.
		The restriction of $\tau$ to any unital C*-subalgebra B of $\A_\alpha^\S$ is again a tracial state on $B$.
		For a contradiction, suppose that $B \cong M_m\left( A_\theta \right)$ for $m \geq 2$, then the restriction of $\tau$ must be the unique tracial state on $M_m\left( A_\theta \right)$.
		Since $\diag\left( 1, 0, \dotsc, 0 \right) \in M_m\left( A_\theta \right)$ is a projection with $\tau\left( \diag\left( 1, 0, \dotsc, 0 \right) \right) = 1/m$, the range $\tau$ on $K_0\left( \A_\alpha^\S \right)$ must contain the fraction $1/m$ and we arrive at a contradiction. 
	\end{proof}
	
	It follows from the lemma below that every projection in $M_m\left( \A_\alpha^\S \right)$ is unitarily equivalent to a projection $P \in M_m\left( A_{\alpha_{2k}} \right)$ for some $k \in \N$.
	
	\begin{lemma}\label{unitarily-equivalent-projections}
		Let $A$ be the direct limit of the direct sequence of C*-algebras $\left( A_n, \varphi_n \right)_{n = 1}^\infty$ and suppose that $\varphi^n : A_n \rightarrow A$ is the natural map for each $n$. If $p$ is a projection in $A$, then there is a $k \geq 1$ and a projection $q \in A_k$ such that $p$ and $\varphi^k\left( q \right)$ are unitarily equivalent.
	\end{lemma}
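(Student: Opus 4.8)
The plan is to combine the density of the building blocks in the direct limit with continuous functional calculus, replacing an approximating self-adjoint element by an honest projection sitting inside one of the $A_k$. First I would exploit density: since $A = \varinjlim(A_n, \varphi_n)$, the union $\bigcup_k \varphi^k(A_k)$ is a dense $*$-subalgebra of $A$, so for any $\epsilon > 0$ there is some $k \geq 1$ and an $a \in A_k$ with $\norm{\varphi^k(a) - p} < \epsilon$. Because $p = p^*$, replacing $a$ by $(a + a^*)/2$ I may assume $a = a^*$, so that $b := \varphi^k(a)$ is self-adjoint and still satisfies $\norm{b - p} < \epsilon$. I would fix $\epsilon$ to be a small absolute constant, say $\epsilon < 1/4$.

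Next I would turn $b$ into a projection. Since $p$ is a projection, its spectrum satisfies $\sigma(p) \subseteq \{0, 1\}$; as $\norm{b - p} < 1/4$, the spectrum $\sigma(b)$ lies in the union of two intervals, one within $1/4$ of $0$ and one within $1/4$ of $1$, and in particular is bounded away from $1/2$. Let $\chi$ be a continuous real function that vanishes on a neighborhood of the lower cluster and equals $1$ on a neighborhood of the upper cluster; then $\chi = \chi^2$ on $\sigma(b)$ and $\chi(0) = 0$, so $p' := \chi(b)$ is a projection lying in $C^*(b) \subseteq \varphi^k(A_k)$. Comparing $\chi$ with the identity function on $\sigma(b)$ gives a routine estimate $\norm{p' - p} < 1$ once $\epsilon$ is taken small enough.

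The remainder is standard. Two projections at distance strictly less than $1$ are unitarily equivalent: the element $z = p'p + (1 - p')(1 - p)$ satisfies $zp = p'z$, and since $\norm{1 - z} = \norm{(1 - 2p')(p - p')} \leq \norm{p - p'} < 1$ it is invertible, so its unitary part $u$ in the polar decomposition conjugates $p$ to $p'$ (in $A$ itself, since the solenoid is unital, or otherwise in the unitization). It then remains to realize $p'$ as $\varphi^k(q)$ for a genuine projection $q \in A_k$. Writing $p' = \varphi^k(a')$ with $a' = (a')^*$, we have $\varphi^k\bigl((a')^2 - a'\bigr) = 0$; when the connecting maps are injective—as they are for the unital embeddings $\varphi_n$ used throughout this paper—this forces $(a')^2 = a'$, so $q := a'$ is the desired projection. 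In general one uses that $x \in \ker \varphi^k$ exactly when the images of $x$ under the composite maps $A_k \to A_m$ tend to $0$ in norm, to pass to some $A_m$ where $\norm{(a'')^2 - a''}$ is arbitrarily small for $a''$ the image of $a'$ in $A_m$, and then applies functional calculus once more in $A_m$ to extract a projection $q$ with $\varphi^m(q)$ close to $p'$, hence unitarily equivalent to $p$.

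The main obstacle I anticipate is purely bookkeeping: threading the several approximation constants so that the final projection $p'$ (or $\varphi^m(q)$) lands within distance $1$ of $p$, which is precisely the threshold required to invoke unitary equivalence of nearby projections. In the setting of this paper this difficulty is largely neutralized by the injectivity of the connecting maps, which eliminates the final lifting step altogether.
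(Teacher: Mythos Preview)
Your proposal is correct and follows essentially the same approach as the paper, which simply cites two standard results from Wegge-Olsen: that projections within norm distance $1$ are unitarily equivalent, and that a projection in a direct limit can be approximated by a projection in some $A_k$. Your argument unpacks precisely these two facts via density and continuous functional calculus, so there is no substantive difference.
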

	
	\begin{proof}
		By \cite[Proposition 5.2.6]{WO93}, it is sufficient to find a projection $q \in A_k$ such that $\norm{p - \varphi^k\left( q \right)}$ is strictly less than $1$.
		Such a projection exists by \cite[Corollary 5.1.7]{WO93}.
	\end{proof}
	
	\begin{theorem}\label{main-result}
		Let $p$, $q$ be prime numbers. Let $\alpha \in \Xi_p$ and $\beta \in \Xi_q$. Then the following statements are equivalent.
		\begin{enumerate}[(1)]
			\item $\A_\alpha^\S$ and $\A_\beta^\S$ are Morita equivalent.
			\item $p = q$ and there exists a projection $P \in M_m\left( A_{\alpha_{2k}} \right)$ for some $k, m \in \N$ that satisfies Condition \ref{the-condition}, and $\A_\beta^\S \cong \varinjlim_{n \geq k} P M_m\left( A_{\alpha_{2n}} \right) P$.
		\end{enumerate}
	\end{theorem}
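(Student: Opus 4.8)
The plan is to prove the two implications separately, the direction $(2) \Rightarrow (1)$ being essentially a repackaging of the work already done in Section \ref{S-directed systems}. For it, I would first observe that, since the connecting maps $\psi_n$ are unital embeddings, the tail of the directed system has the same limit, so $\A_\alpha^\S = \varinjlim_{n \ge k} A_{\alpha_{2n}}$ and the entire construction of Section \ref{S-directed systems} may be run with base level $2k$ in place of $0$. Given a projection $P \in M_m(A_{\alpha_{2k}})$ satisfying Condition \ref{the-condition}, Proposition \ref{construction-beta} and Theorem \ref{main-theorem} then produce a directed system of equivalence bimodules $(X_{2n}, \iota_n)_{n \ge k}$ adapted to $\bigl( PM_m(A_{\alpha_{2n}})P, \varphi_n\bigr)_{n\ge k}$ and $\bigl(A_{\alpha_{2n}}, \psi_n\bigr)_{n \ge k}$, and the corollary to Theorem \ref{main-theorem} (an application of \cite[Theorem 3.4]{LP3}) yields that $\A_\alpha^\S$ is Morita equivalent to $\varinjlim_{n\ge k} PM_m(A_{\alpha_{2n}})P \cong \A_\beta^\S$. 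Together with the hypothesis $p = q$, this is exactly $(1)$.

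For $(1) \Rightarrow (2)$, suppose $\A_\alpha^\S$ and $\A_\beta^\S$ are Morita equivalent; throughout I use that $\alpha$, and hence $\beta$, is irrational, so both algebras are simple with a unique trace. To see that $p = q$, I would use that Morita equivalence preserves $K_0$, so $K_0(\A_\alpha^\S) \cong K_0(\A_\beta^\S)$ as abelian groups. From the direct-limit realization, the subsystem generated by the class of the unit is constant while the quotient system is $\Z \xrightarrow{\times p^2} \Z \to \cdots$, giving a short exact sequence $0 \to \Z \to K_0(\A_\alpha^\S) \to \Z[1/p] \to 0$. Reducing modulo a prime $\ell$ and applying the snake lemma to this sequence shows that $\dim_{\mathbb{F}_\ell}\bigl(K_0(\A_\alpha^\S)/\ell K_0(\A_\alpha^\S)\bigr)$ equals $1$ when $\ell = p$ and $2$ otherwise. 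Thus $p$ is recovered as an invariant of $K_0(\A_\alpha^\S)$ (alternatively one cites the $K$-theory computation of \cite{LP1}), forcing $p = q$.

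Next I would locate the projection. Since both algebras are unital, Morita equivalence gives $\A_\beta^\S \cong P' M_m(\A_\alpha^\S) P'$ for some projection $P'$ and some $m$; by Lemma \ref{unitarily-equivalent-projections} applied to the system $\bigl(M_m(A_{\alpha_{2n}})\bigr)_n$, $P'$ is unitarily equivalent to the image of some $P \in M_m(A_{\alpha_{2k}})$, and since conjugating a projection by a unitary merely conjugates its corner, $\A_\beta^\S \cong P M_m(\A_\alpha^\S) P$. As $P$ is fixed and compression by a fixed projection commutes with direct limits, $P M_m(\A_\alpha^\S) P = \varinjlim_{n \ge k} P M_m(A_{\alpha_{2n}}) P$, which supplies the isomorphism required in $(2)$. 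Writing $\tau_{\alpha_{2n}}(P) = c_{2n}\alpha_{2n} + d_{2n}$ with $c_{2n}, d_{2n} \in \Z$ (uniquely, by irrationality of $\alpha_{2n}$), Lemma \ref{iso-right-module} and the discussion after it identify $P M_m(A_{\alpha_{2n}}) P \cong M_{k_{2n}}(A_{\theta})$ for a suitable irrational $\theta = \theta_{2n}$, with $k_{2n} = \gcd(c_{2n}, d_{2n})$. If some $k_{2n} \ge 2$, then $M_{k_{2n}}(A_{\theta})$ would embed unitally into $\A_\beta^\S$ as a term of the direct limit, contradicting Lemma \ref{no-matrix-over-irr-rotation-algebra}; hence $\gcd(c_{2n}, d_{2n}) = 1$ for all $n \ge k$. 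Finally, the recursions $c_{2n+2} = p^2 c_{2n}$ and $d_{2n+2} = d_{2n} - c_{2n}(x_{2n} + p x_{2n+1})$, combined with the argument of Lemma \ref{condition-useful-form} (now anchored at level $2k$), show that this family of coprimality conditions is equivalent to Condition \ref{the-condition} for $P$, completing $(2)$.

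The main obstacle I anticipate is this final step of the reverse direction: ruling out genuine matrix amplifications $M_{k}(A_{\theta})$ with $k \ge 2$ among the corners. This is precisely where Lemma \ref{no-matrix-over-irr-rotation-algebra} is indispensable, and it is also the place where the otherwise opaque Condition \ref{the-condition} reveals its meaning: it is exactly the arithmetic condition on $\tau(P)$ that keeps every corner $P M_m(A_{\alpha_{2n}}) P$ a genuine, non-amplified irrational rotation algebra, so that the whole tower assembles into a noncommutative solenoid rather than a matrix amplification of one.
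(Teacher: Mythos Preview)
Your proof is correct and follows essentially the same route as the paper's: $(2)\Rightarrow(1)$ via Theorem \ref{main-theorem} and its corollary applied to the truncated system, and $(1)\Rightarrow(2)$ by first forcing $p=q$ via $K$-theory, then pushing the projection down to some $M_m(A_{\alpha_{2k}})$ by Lemma \ref{unitarily-equivalent-projections}, and finally ruling out nontrivial matrix amplifications in the corners with Lemma \ref{no-matrix-over-irr-rotation-algebra} to force Condition \ref{the-condition}. The only substantive difference is in the $K$-theoretic step: the paper simply invokes $K_1(\A_\alpha^\S)\cong\Z[1/p]\times\Z[1/p]$ from \cite[Theorem 3.7]{LP1} and observes this determines $p$, whereas you work with $K_0$ and an $\mathbb{F}_\ell$-dimension count on $K_0/\ell K_0$; your argument is self-contained but the paper's citation of $K_1$ is shorter, and you already note this alternative yourself.
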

	
	\begin{remark}
		It is clear that truncating the first $k$ terms of $\alpha = \left( \alpha_n \right)_{n \in \N}$ does not change the resulting noncommutative solenoid.
		For this reason, we say that a projection $P \in M_m\left( A_{\alpha_{2k}}\right)$ satisfies Condition \ref{the-condition} to mean: $\tau_{\alpha_{2n}}\left( P \right) = a_{2n} \alpha_{2n} + b_{2n}$ and $\gcd\left( pc_{2n}, d_{2n} - c_{2n} x_{2n} \right) = 1$.
		Essentially, our results in the previous section apply to any projection in $M_m\left( A_{\alpha_{k}} \right)$ for any $k \in \N$ by truncating the first $k$ terms of $\alpha$ starting from zero.
		Without loss of generality, we can always assume $k$ is even.
	\end{remark}
	
	\begin{proof}[Proof of Theorem \ref{main-result}]
		$(2)$ implying $(1)$ follows from the directed system of equivalence bimodules construction, as in Theorem \ref{main-theorem} and its Corollary, with projection $P$ and truncated sequence $\left( A_{\alpha_{2n}}, \varphi_n \right)_{n \geq k}$.
		
		We show that $(1)$ implies $(2)$ as follows.
		In \cite[Theorem 3.7]{LP1}, the $K$-theory of noncommutative solenoids is computed: in particular, for $\alpha \in \Xi_p$,
		\[
		K_1\left( \A_\alpha^\S \right) = \Z\left[ \dfrac{1}{p} \right] \times \Z\left[ \dfrac{1}{p} \right].
		\]
		It is clear that for $p$ and $q$ primes, $\Z\left[ 1/p \right] \times \Z\left[ 1/p \right] \cong \Z\left[1/q \right] \times \Z\left[1/q \right]$ if and only if $p = q$.
		Since Morita equivalent C*-algebras have isomorphic $K_1$ groups, we must have $p = q$. 
		
		Since both $\A_\alpha^\S$ and $\A_\beta^\S$ are unital, $\A_\beta^\S \cong P M_m\left( \A_\alpha^\S \right) P$ for a suitable $m \geq 1$ and a projection $P \in M_m\left( \A_\alpha^\S \right)$ \cite[Proposition 2.1]{Rieffel81}.
		Recall that $\A_\alpha^\S$ is the direct limit of the sequence of increasing irrational rotation algebras associated to the $\alpha_{2n}$'s.
		By Lemma \ref{unitarily-equivalent-projections}, we can then assume that $P \in M_m\left( A_{\alpha_{2k}} \right)$ for a large enough $k$.
		It is evident that
		\[
		\A_\beta^\S \cong P M_m\left( \A_\alpha^\S \right) P \cong \varinjlim_{n \geq k} P M_m\left( A_{\alpha_{2n}} \right) P.  
		\]
		It remains to show that $P$ must satisfy Condition \ref{the-condition}.
		Assume, for the sake of contradiction, that $P$ does not satisfy the Condition.
		Then by Lemma \ref{condition-useful-form}, there exists $N \geq k$ such that $\tau_{\alpha_{2N}} \left(P \right) = \ell \left( c\alpha_{2N} + d \right)$, with $\gcd\left( c, d \right) = 1$.
		As discussed in the paragraph following Lemma \ref{iso-right-module}, $P M_m\left( A_{\alpha_{2n}} \right) P$ is then isomorphic to an $\ell \times \ell$ matrix algebra over some irrational rotation algebra, say $A_\beta$.
		Moreover, it is easy to check that $\ell$ is a common divisor of $c_{2n}$ and $d_{2n}$ for all $n \geq N$.
		So the direct limit C*-algebra contains $M_\ell\left( A_{\beta} \right)$ as a subalgebra.
		This is a contradiction to Lemma \ref{no-matrix-over-irr-rotation-algebra}.
	\end{proof}
	
	We end this section with an example where Condition \ref{the-condition} is not satisfied. In particular, we see that the direct limit of $P A_{\alpha_{2n}} P$ can be a direct limit C*-algebra of matrix algebras that are strictly increasing in size.
	
	\begin{example}
		For $\alpha_0 \neq 0$, consider the irrational noncommutative solenoid $\A_\alpha^\S$ given by 
		\[
		\alpha = \left( \alpha_0, \dfrac{\alpha_0}{p}, \dfrac{\alpha_0}{p^2}, \dotsc \right) \in \Xi_p.
		\]
		Let $P$ be the projection in $A_{\alpha_0}$ with trace $\alpha_0$.
		As a projection in $A_{\alpha_{2n}}, \tau_{\alpha_{2n}} \left( p \right) = p^{2n} \alpha_{2n}$.
		By Lemma \ref{iso-right-module}, $P A_{\alpha_{2n}} P \cong M_{p^{2n}}\left( A_{\frac{1}{\alpha_{2n}}} \right)$.
		Therefore, by Lemma \ref{no-matrix-over-irr-rotation-algebra}, $PM_n(\A_{\alpha})P$ cannot be a noncommutative solenoid.
		However, it would be interesting to further investigate the structure of the C*-algebras $\varinjlim P A_{\alpha_{2n}} P \cong \varinjlim M_{p^{2n}}\left( A_{\frac{1}{\alpha_{2n}}} \right)$, and how they are related to the noncommutative solenoids. 
	\end{example}
	
	\section*{Acknowledgments}
	
	This work makes up part of the author's Ph.D. thesis for the University of Colorado Boulder.
	The author would like to thank his thesis advisor Dr. Judith Packer. He is beyond grateful for her continued support and guidance.
	This work would not have come to fruition without her.


\bibliographystyle{plain}	
\bibliography{refs}		

\begin{thebibliography}{10}

\bibitem{BHS19}
Nathan Brownlowe, Mitchell Hawkins, and Aidan Sims.
\newblock The {T}oeplitz noncommutative solenoid and its
  {K}ubo-{M}artin-{S}chwinger states.
\newblock {\em Ergodic Theory Dynam. Systems}, 39(1):105--131, 2019.
\newblock Corrigendum, 39(9):2592, 2019.

\bibitem{DPS18}
Robin~J. Deeley, Ian~F. Putnam, and Karen~R. Strung.
\newblock Constructing minimal homeomorphisms on point-like spaces and a
  dynamical presentation of the {J}iang-{S}u algebra.
\newblock {\em J. Reine Angew. Math.}, 742:241--261, 2018.

\bibitem{DPS19}
Robin~J. Deeley, Ian~F. Putnam, and Karen~R. Strung.
\newblock Constructions in minimal amenable dynamics and applications to the
  classification of $\mathrm{C}^*$-algebras, 2019.

\bibitem{Kodaka92}
Kazunori Kodaka.
\newblock Endomorphisms of certain irrational rotation {$C^*$}-algebras.
\newblock {\em Illinois J. Math.}, 36(4):643--658, 1992.

\bibitem{LP17}
Fr\'{e}d\'{e}ric Latr\'{e}moli\`ere and Judith Packer.
\newblock Noncommutative solenoids and the {G}romov-{H}ausdorff propinquity.
\newblock {\em Proc. Amer. Math. Soc.}, 145(5):2043--2057, 2017.

\bibitem{LP2}
Fr\'{e}d\'{e}ric Latr\'{e}moli\`ere and Judith~A. Packer.
\newblock Noncommutative solenoids and their projective modules.
\newblock In {\em Commutative and noncommutative harmonic analysis and
  applications}, volume 603 of {\em Contemp. Math.}, pages 35--53. Amer. Math.
  Soc., Providence, RI, 2013.

\bibitem{LP3}
Fr\'{e}d\'{e}ric Latr\'{e}moli\`ere and Judith~A. Packer.
\newblock Explicit construction of equivalence bimodules between noncommutative
  solenoids.
\newblock In {\em Trends in harmonic analysis and its applications}, volume 650
  of {\em Contemp. Math.}, pages 111--140. Amer. Math. Soc., Providence, RI,
  2015.

\bibitem{LP1}
Fr\'{e}d\'{e}ric Latr\'{e}moli\`ere and Judith~A. Packer.
\newblock Noncommutative solenoids.
\newblock {\em New York J. Math.}, 24A:155--191, 2018.

\bibitem{RW98}
Iain Raeburn and Dana~P. Williams.
\newblock {\em Morita equivalence and continuous-trace {$C^*$}-algebras},
  volume~60 of {\em Mathematical Surveys and Monographs}.
\newblock American Mathematical Society, Providence, RI, 1998.

\bibitem{Rieffel76}
Marc~A. Rieffel.
\newblock Strong {M}orita equivalence of certain transformation group
  {$C\sp*$}-algebras.
\newblock {\em Math. Ann.}, 222(1):7--22, 1976.

\bibitem{Rieffel81}
Marc~A. Rieffel.
\newblock {$C^{\ast} $}-algebras associated with irrational rotations.
\newblock {\em Pacific J. Math.}, 93(2):415--429, 1981.

\bibitem{Rieffel82}
Marc~A. Rieffel.
\newblock Applications of strong {M}orita equivalence to transformation group
  {$C^{\ast} $}-algebras.
\newblock In {\em Operator algebras and applications, {P}art {I} ({K}ingston,
  {O}nt., 1980)}, volume~38 of {\em Proc. Sympos. Pure Math.}, pages 299--310.
  Amer. Math. Soc., Providence, R.I., 1982.

\bibitem{Rieffel83}
Marc~A. Rieffel.
\newblock The cancellation theorem for projective modules over irrational
  rotation {$C^{\ast} $}-algebras.
\newblock {\em Proc. London Math. Soc. (3)}, 47(2):285--302, 1983.

\bibitem{Rieffel88}
Marc~A. Rieffel.
\newblock Projective modules over higher-dimensional noncommutative tori.
\newblock {\em Canad. J. Math.}, 40(2):257--338, 1988.

\bibitem{Robert00}
Alain~M. Robert.
\newblock {\em A course in {$p$}-adic analysis}, volume 198 of {\em Graduate
  Texts in Mathematics}.
\newblock Springer-Verlag, New York, 2000.

\bibitem{WO93}
N.~E. Wegge-Olsen.
\newblock {\em {$K$}-theory and {$C^*$}-algebras}.
\newblock Oxford Science Publications. The Clarendon Press, Oxford University
  Press, New York, 1993.
\newblock A friendly approach.

\bibitem{Williams07}
Dana~P. Williams.
\newblock {\em Crossed products of {$C{^\ast}$}-algebras}, volume 134 of {\em
  Mathematical Surveys and Monographs}.
\newblock American Mathematical Society, Providence, RI, 2007.

\end{thebibliography}
\end{document}